\setlist[enumerate,1]{label={\roman*)}}
\newcommand{\mailto}[1]{\href{mailto:#1}{#1}}
\newcommand{\X}{\mathfrak{X}}
\DeclarePairedDelimiter{\ray}{\langle}{\rangle_+}
\DeclarePairedDelimiter{\sqbracks}{[}{]}
\newcommand*{\lieBr}[1]{\sqbracks{#1}}
\DeclarePairedDelimiter{\jacBr}{\lbrace}{\rbrace}
\DeclarePairedDelimiter{\set}{\{}{\}}
\DeclarePairedDelimiter{\gen}{\langle}{\rangle}
\newcommand{\Cinfty}{\mathscr{C}^\infty}
\newcommand{\JacLambda}{\Lambda} 
\newcommand{\lsharp}{\sharp_\JacLambda}
\theoremstyle{plain}
\newtheorem{theorem}{Theorem}
\newtheorem*{theorem*}{Theorem}
\newtheorem{lemma}[theorem]{Lemma}
\newtheorem*{lemma*}{Lemma}
\newtheorem{proposition}[theorem]{Proposition}
\newtheorem*{proposition*}{Proposition}
\newtheorem{corollary}[theorem]{Corollary}  
\newtheorem*{corollary*}{Corollary}
\theoremstyle{definition}
\newtheorem{definition}{Definition}
\newtheorem*{definition*}{Definition}
\newtheorem*{example*}{Example}
\newtheorem{remark}{Remark}
\newtheorem*{remark*}{Remark}
\newtheorem*{conjecture*}{Conjecture}
\newtheorem*{problem*}{Problem}
\newcommand*{\ZZ}{\mathbb{Z}}
\newcommand*{\RR}{\mathbb{R}}
\let\R\RR
\newcommand*{\TT}{\mathbb{T}}
\newcommand*{\Sp}{\mathbb{S}}
\newcommand\restr[2]{{
  \left.\kern-\nulldelimiterspace 
  #1 
  \right|_{#2} 
}}
\newcommand{\cdist}{\mathcal{H}} 
\newcommand{\Ldist}{\mathfrak{L}} 
\newcommand{\T}{{\mathsf T}}
\newcommand{\cT}{\T^{\ast}}
\newcommand*{\dd}{\mathrm{d}}
\newcommand*{\contr}[1]{\iota_{#1}}
\newcommand*{\liedv}[1]{\mathcal{L}_{#1}}
\newcommand*{\Reeb}{\mathcal{R}}
\DeclareMathOperator{\Image}{Im}
\let\Im\Image
\newcommand{\norm}[1]{\left\lvert\left\lvert #1 \right\rvert\right\rvert}
\let\oldemph\emph
\renewcommand{\jourvoldelim}{\addcomma\space}
\date{\today} 
\begin{document}

\title[Liouville--Arnold theorem for homogeneous and contact Hamiltonian systems]{Liouville\,--Arnold theorem for homogeneous symplectic and contact Hamiltonian systems}

\author{Leonardo Colombo}
\address{Centre for Automation and Robotics, Spanish National Research Council (CSIC), Arganda del Rey, Madrid, Spain.}
\email{\mailto{leonardo.colombo@car.upm-csic.es}}

\author{Manuel de Le\'on}
\address{Institute of Mathematical Sciences, Spanish National Research Council (CSIC), Madrid, Spain; and Royal Academy of Exact, Physical and Natural Sciences, Madrid, Spain.}
\email{\mailto{mdeleon@icmat.es}}

\author{Manuel Lainz}
\address{Department of Quantitative Methods, CUNEF University, Madrid, Spain.}
\email{\mailto{manuel.lainz@cunef.edu}}

\author{Asier L\'opez-Gord\'on}
\address{Institute of Mathematics of the Polish Academy of Sciences (IM PAN), Warsaw, Poland.}
\email{\mailto{alopez-gordon@impan.pl}}

\thanks{The authors wish to express their gratitude to the referees for their constructive feedback.
They are also thankful to Robert Cardona, Guillermo Gallego and Eva Miranda for some fruitful discussions and valuable comments along the writing process of the paper. They acknowledge financial support from Grants PID2022-137909NB-C2 and RED2022-134301-T, funded by MCIN/AEI/ 10.13039/501100011033. Manuel de León, Manuel Lainz and Asier López-Gordón also received support from the Grant CEX2019-000904-S funded by MCIN/AEI/ 10.13039/501100011033.}

\date{\today}







\vspace{10pt}

\subjclass[2020]{Primary: 37J35, 37J55, 70H06; Secondary: 53D10, 53E50, 53Z05}

\keywords{integrable system, contact manifold, symplectic manifold, Hamiltonian system}

\begin{abstract}
A Hamiltonian system is completely integrable (in the sense of Liouville) if there exist as many independent integrals of motion in involution as the dimension of the configuration space. Under certain regularity conditions, Liouville--Arnold theorem states that the invariant geometric structure associated with Liouville integrability is a fibration by Lagrangian tori (or, more generally, Abelian Lie groups), on which the motion is linear.  In this paper, a Liouville--Arnold theorem for contact Hamiltonian systems is proven.  More specifically, it is shown that, given a $(2n+1)$-dimensional completely integrable contact system, one can construct a foliation by $(n+1)$-dimensional Abelian Lie groups and induce action-angle coordinates in which the equations of motion are linearized. One important novelty with respect to previous attempts is that the foliation consists of $(n+1)$-dimensional coisotropic submanifolds given by the preimages of rays by the functions in involution. In order to prove the theorem, we first develop a version of Liouville--Arnold theorem for homogeneous functions on exact symplectic manifolds (which is of independent interest), and then apply the symplectization to obtain the contact case.

\medskip


\end{abstract}

%
%
%
%
%


\dedicatory{Dedicated to the memory of Prof.~Miguel Carlos Muñoz-Lecanda.}

\maketitle

\section{Introduction}\label{sec_intro}

A large class of physical systems can be described through Hamiltonian systems \cite{Abraham2008,Arnold1978,Libermann1987,Godbillon1969,Goldstein1980,Wiggins2003}. In such systems, the dynamics is completely determined by the partial derivatives of the Hamiltonian function. The equations of motion are called Hamilton equations, a system of first-order differential equations. A Hamiltonian system is integrable if there exist explicit solutions to Hamilton’s equations of motion. Liouville--Arnold theorem \cite{Arnold1978,Bolsinov2004,Wiggins2003} asserts that, provided that there exist some functionally independent functions called first integrals, which are in involution with respect to a Poisson bracket, the equations of motion of a Hamiltonian system can be integrated by quadratures. That is, Hamilton equations can be solved by a finite number of algebraic operations and the calculation of integrals of known functions. More formally, given a symplectic manifold of dimension $2n$, a Hamiltonian system is (Liouville) completely integrable if one can find $n$ independent functions that pairwise Poisson-commute and are independent on a dense open subset. In this situation, this open subset admits a Lagrangian foliation and the solutions of the dynamics live on the leaves of the foliation. This notion may be extended, in a natural way, for the more general case when the phase space is a Poisson \cite{Miranda2022, Laurent-Gengoux2010} or Dirac manifold \cite{Zung2012}, not necessarily symplectic.

In recent years, there has been an increasing interest in the study of contact Hamiltonian systems (see \cite{deLeon2019,deLeon2019a,Grabowska2022,Grabowska2023a,Bravetti2017a,Montaldi2023} and references therein). Contact geometry has been used in the last years to describe dissipative mechanical systems from a Hamiltonian and Lagrangian point of view, as well as systems in thermodynamics \cite{Bravetti2019} and quantum mechanics \cite{Ciaglia2018}, among many others. Nevertheless, there is a lack in the literature regarding complete integrability of contact Hamiltonian systems as in the symplectic, Poisson, or Dirac scenarios.  In \cite{MirandaGalceran2003,MirandaGalceran2005} (see also \cite{Miranda2014}) a first result on action-angle coordinates for integrable systems with non-degenerate singularities on contact manifolds was given. In \cite{A.E2023}, the authors study the so-called particular integrals in autonomous and non-autonomous contact Hamiltonian systems, which allow to integrate the equations of motion by quadratures.

A notion of a completely integrable contact manifold has been given in \cite{Khesin2010}. In this reference, a contact manifold is said to be integrable if it is a flag of two foliations, Legendrian and co-Legendrian, and a holonomy-invariant transverse measure of the former in the latter. Additionally, an equivalence to previously known definitions of contact integrability \cite{Banyaga1993,Banyaga1999,Lerman2003, Libermann1991,Pang1990, Geiges2023, Jovanovic2015} is established. Moreover, in \cite{Khesin2010} the authors show that contact completely integrable systems are solvable in quadratures. However, these definitions are too restrictive for their application to dynamical systems with dissipation (see Section~\ref{sec:other_definitions}). Hence, we introduce a new definition of completely integrable contact system (see Definition~\ref{def:integrable_systems}).

Liouville--Arnold theorem \cite{Arnold1978,Audin2004,Fiorani2003,Bolsinov2004,Wiggins2003} states that on a $2n$-dimensional symplectic manifold with $n$ independent functions in involution $f_\alpha$, there exists a fibration by invariant Lagrangian tori. Moreover, on a neibourhood of each torus there exist the so-called action-angle coordinates. The action coordinates are related to the functions in involution $f_\alpha$ and specify a particular torus in the foliation. The angle coordinates are constructed using the flow of their Hamiltonian vector fields $X_{f_\alpha}$ and specify the position on each torus. Furthermore, the equations of motion are linear in these coordinates.

The generalization to contact Hamiltonian systems, however, is not straightforward. Assume that we are given a $(2n+1)$-dimensional contact manifold $M$ with $k$ functions $f_{\alpha}$ in involution with respect to the Jacobi bracket. If we naively try to construct the foliation by taking the level sets of $f_\alpha$ we run into problems. First of all, we have an issue with the dimensions of the leaves. Indeed, if $k=n$, the leaves are $(n+1)$-dimensional, but we only have $n$ Hamiltonian vector fields, and thus we are unable to construct angle coordinates. If $k = n + 1$, the leaves are $n$-dimensional but we have $n+1$ Hamiltonian vector fields and, hence, $n+1$ \enquote{angle} variables. Moreover, the level sets of the functions are, in general, not invariant by the flow of the Hamiltonian vector fields. Both problems can be fixed by taking $k=n+1$ but considering preimages of rays $M_{\ray{\lambda}} = \{x \in M \mid f_\alpha(x) = r \lambda_\alpha, \text{ for some $r>0$}\}$ instead of level sets. 

The main result of this paper is a Liouville--Arnold theorem for contact Hamiltonian systems (Theorem~\ref{thm:main_theorem}). By using a symplectization of a $(2n+1)$-dimensional completely integrable contact system, we show that it is possible to define a foliation whose leaves are invariant by the Hamiltonian flows of the functions in involution and diffeomorphic to $(n+1)$-dimensional Abelian Lie groups. In a neighbourhood of each of the leaves, one can introduce canonical coordinates in which the equations of motion are linearized. Furthermore, we have proven a Liouville--Arnold theorem for homogeneous functions on exact symplectic manifolds (Theorem~\ref{theorem:LA_exact_symp}). This result is stronger than the Liouville--Arnold theorem for non-compact manifolds \cite{Fiorani2003a}, since the diffeomorphism obtained not only preserves the symplectic form but also preserves its symplectic potential

The paper is structured as follows. Section~\ref{sec_preliminaries} introduces Jacobi and contact manifolds, in addition to Hamiltonian dynamics on contact manifolds. In Section~\ref{sec:theorem_exact_symp}, we prove a Liouville--Arnold theorem for homogeneous functions in involution on an exact symplectic manifold. This is
done in order to guarantee the homogeneous character on the action-angle coordinates that will be required when symplectizing a completely integrable contact system. Section~\ref{sec:main_theorem} contains the proof of the Liouville--Arnold theorem for contact Hamiltonian systems and the construction of action-angle coordinates. Essentially, the theorem is proven by considering the symplectization of the contact manifold, applying the theorem for exact symplectic manifolds, and then projecting the action-angle coordinates to the contact manifold. In Section~\ref{sec:coisotropic}, the properties of coisotropic manifolds are exploited to produce alternative characterizations of completely integrable contact systems. In Section~\ref{sec:other_definitions}, we provide a review of the literature on contact integrable systems, comparing the results and definitions of other authors with ours. 
Finally, to show an application of the theory we have developed, an illustrative example is studied in Section~\ref{sec:example}. 

\textbf{Notation and conventions.} From now on, all the manifolds and mappings are assumed to be smooth {and connected}. Submersions are assumed to be surjective. Fiber bundles are assumed to be locally trivial. Sum over crossed repeated indices is understood.

Given a manifold $M$, the $\Cinfty(M)$-modules of vector fields and of $p$-forms on $M$ will be denoted by $\X(M)$ and by $\Omega^p(M)$, respectively. For $X\in\X(M)$ and $\alpha\in \Omega^p(M)$,
the exterior derivative of $\alpha$ is denoted by $\dd \alpha$, the contraction of $\alpha$ with $X$ is denoted by $\contr{X}\alpha$. The Lie derivative of a tensor field $A$ on $M$ with respect to $X$ is denoted by $\liedv{X} A$. Given a collection of vector fields $X_1, \ldots, X_n \in \X(M)$, the distribution spanned by them is denoted by $\langle  X_1, \ldots, X_n  \rangle$, and analogously for codistributions spanned by one-forms. The Lie bracket on $\X(M)$ is denoted by $[\cdot, \cdot]$.

The tangent and cotangent bundles of a manifold $M$ will be denoted by $\tau_M \colon \T M\to M$ and $\pi_M \colon \cT M\to M$, respectively. Given two manifolds $M$ and $N$, and a map $F\colon M \to N$, the tangent map of $F$ will be denoted by $\T F \colon \T M \to \T N$. If $F$ is a diffeomorphism, the pushforward of $X\in \X(M)$ by $F$ will be denoted by $F_\ast X$, that is, $(F_\ast X)_x = \T F_{F^{-1}(x)} \left(X_{F^{-1}(x)}\right)$.
The positive real numbers will be denoted by $\R_+$. The $n$-dimensional torus will be denoted by $\TT^n$, {defined as $\TT^n = \RR^n/\ZZ^n$}.
Latin indices $i,j,k$ will take values in $\{1, \ldots, n\}$, whilst Greek indices $\alpha, \beta, \gamma$ will take values in $\{0, 1, \ldots, n\}$.

\section{Jacobi and contact manifolds} \label{sec_preliminaries}

\subsection{Jacobi manifolds}

{
Let $M$ be a manifold.
Consider a map $\{\cdot, \cdot\}\colon \Cinfty(M)\times \Cinfty(M)\to \Cinfty(M)$ given by
\begin{equation}\label{eq:Jacobi_bracket}
    \{f, g\} = \Lambda(\dd f\wedge \dd g) + f E(g) - g E(f)\, ,
\end{equation}
where $\Lambda\in\Gamma(\bigwedge^2 \X(M))$ is a bivector field and $E\in \X(M)$ a vector field. Lichnerowicz \cite{Lichnerowicz1977, Lichnerowicz1977a, Lichnerowicz1978} proved that $\{\cdot, \cdot\}$ is a Lie bracket if and only if 
\begin{equation}\label{eq:Jacobi_structure}
    [\Lambda, \Lambda]_{\mathrm{SN}} = 2 E \wedge \Lambda\, , \quad [E, \Lambda]_{\mathrm{SN}} = 0 \, ,
\end{equation}
where $[\cdot, \cdot]_{\mathrm{SN}}$ denotes the Schouten--Nijenhuis bracket (see \cite{Grabowski2013, Michor1987} and references therein for more details on this bracket).
If these conditions hold, then $\{\cdot, \cdot\}$ is called a \emph{Jacobi bracket on $M$}, the pair $(\Lambda, E)$ is called a \emph{Jacobi structure on $M$}, and the triple $(M, \Lambda, E)$ is called a \emph{Jacobi manifold}. By construction, the Jacobi bracket is a Lie bracket, that is, it is bilinear, skew-symmetric and satisfies the Jacobi identity. Moreover, it satisfies the so-called \emph{weak Leibniz rule}, namely, 
\begin{equation}\label{eq:weak_Leibniz_rule}
    \{f, gh\} = \{f, g\} h + \{f, h\} g + g h E(f)\, ,
\end{equation} 
for any $f, g, h\in \Cinfty(M)$, where $gh$ denotes the pointwise multiplication.

It is possible to consider an equivalent, more algebraic, definition Jacobi structures on manifolds.
Since the Jacobi bracket $\{\cdot, \cdot\}$ is a Lie bracket and satisfies the weak Leibniz rule, it defines a structure of local Lie algebra in the sense of Kirillov on $\Cinfty(M)$ (see \cite{Kirillov1976,Dazord1991,G.L1984}). Conversely, given a local Lie algebra on $\Cinfty(M)$, there exists a Jacobi structure on $M$ such that the Jacobi bracket coincides with the algebra bracket. 
}

The bivector $\JacLambda$ induces a $\Cinfty(M)$-module morphism $\lsharp\colon\Omega^1(M)\to\X(M)$ given by
\begin{equation}
   \lsharp(\alpha) = \JacLambda(\alpha,\cdot)\, .
\end{equation}


The \emph{characteristic distribution} of a Jacobi manifold $(M, \JacLambda, E)$ is given by $C=\lsharp(\cT M)\oplus \langle E \rangle$. 
{The so-called \emph{Hamiltonian vector field} $X_f$ of $f\in \Cinfty(M)$ is given by
$$X_f = \lsharp(\dd f) + f E\, ,$$
or equivalently,
\begin{equation}\label{eq:Hamiltonian_vf_Jacobi_bracket}
    X_f(g) = \{f,g\} + g E(f)\, , \quad \forall\, g\in \Cinfty(M)\, .
\end{equation}
}

\begin{definition}
    Let $(M, \JacLambda, E)$ be a Jacobi manifold with Jacobi bracket $\{\cdot, \cdot\}$.
    A collection of functions $f_1, \ldots, f_k\in \Cinfty(M)$ are said to be \emph{in involution} if
    $\{f_i, f_j\}=0$ for any $i,j=1, \ldots, k$.
\end{definition}

The notions of coisotropic and Lagrangian submanifolds have been deeply studied in symplectic and Poisson geometry \cite{Libermann1987,Vaisman1994}. The analogous notions for Jacobi manifolds were first studied by Ibáñez, de León, Marrero and Martín de Diego \cite{Ibanez1997} (see also \cite{Le2018}).

\begin{definition}
    Given a Jacobi manifold $(M, \JacLambda, E)$ with characteristic distribution $C$, we define the Jacobi orthogonal complement of a distribution $D \subseteq \T M$ as 
    \begin{equation}
        D^{\perp_\JacLambda} = \lsharp( D^\circ )\, ,
    \end{equation}
    where $D^\circ$ is the annihilator of $D$, that is, the codistribution given by
    \begin{equation}
        D^\circ_x = \{\alpha \in \cT_x M \mid \alpha(v) = 0\ \forall\, v\in D_x\}\, ,
    \end{equation}
    for each $x\in M$.

    We say that a submanifold $N \hookrightarrow M$ is 
    \begin{itemize}
        \item \emph{Coisotropic} if $\T N^{\perp_\JacLambda} \subseteq \T N$.
        \item \emph{Legendrian} if $\T N^{\perp_\JacLambda} = \T N \cap C$.
    \end{itemize}
\end{definition}

\subsection{Contact manifolds}

Let $M$ be a $(2 n+1)$-dimensional manifold. A \emph{contact distribution} on $M$ is a corank-one maximally non-integrable distribution $\cdist$ on $M$. Being maximally non-integrable means that the bilinear map
$$\nu_\cdist\colon \cdist \times_M \cdist \to \T M/\cdist\, , \quad \nu_\cdist\Big([X,Y]\Big) = \gamma(X, Y)$$
is non-degenerate, where $[\cdot, \cdot]$ denotes the restriction to sections of $\cdist$ of the canonical Lie bracket on $\X(M)$, and $\gamma\colon \T M \to \T M/\cdist$ is the canonical projection.
The pair $(M,\cdist)$ is  called a \emph{contact manifold}. A \emph{contact form} on $M$ is a one-form $\eta\in\Omega^1(M)$ such that $\ker\eta$ is a contact distribution on $M$. If such one form exists globally, $(M,\eta)$ is called a \emph{co-oriented contact manifold} \cite{Geiges2008}. Henceforth, all the contact manifolds considered will be co-oriented. Hence, the following definition will be employed.

\begin{definition}
    A \emph{contact manifold} is a pair $(M, \eta)$, where $M$ is a $(2 n+1)$-dimensional manifold and $\eta$ is a one-form on $M$ such that $\eta \wedge(\mathrm{d} \eta)^{n}$ is a volume form.
The one-form $\eta$ is called a \emph{contact form}. 
\end{definition}

The contact form $\eta$ on $M$ defines the vector bundle isomorphism $\flat : \T M \to \cT M$
\begin{equation*}
\begin{aligned}
  \flat \colon
  v & \mapsto \contr{v} \dd \eta + \eta(v) \eta\, ,
\end{aligned}
\end{equation*}
which induces an isomorphism of $\Cinfty(M)$-modules $\flat\colon \X(M) \to \Omega^1(M)$.
There exists a unique vector field $\Reeb$ on $(M,\eta)$, called the \emph{Reeb vector field}, such that $\flat(\Reeb) = \eta$, that is,
\begin{equation*}
   \contr{\Reeb} \dd \eta = 0\, ,\quad 
   \contr{\Reeb} \eta = 1\, .
\end{equation*}
Given a function $f\in \Cinfty(M)$,  its \emph{Hamiltonian vector field} $X_f$ is given by
\begin{equation}
  \flat(X_f) = \dd f - \left(\Reeb (f) + f  \right) \eta\, ,
  \label{Hamiltonian_vector_field}
\end{equation}
or, equivalently, 
 \begin{subequations}
 \begin{flalign}
   &\eta(X_f) = -f\, ,\quad \liedv{X_f} \eta = - \Reeb(f) \eta\, . \label{lieder_eta}
 \end{flalign}
 \label{Eqs_Hamiltonian_vector_field}
 \end{subequations}



Around each point of a $(2n+1)$-dimensional contact manifold $(M, \eta)$ there exist local coordinates, called \emph{Darboux coordinates}, $(q^1, \ldots, q^n, p_1, \ldots, p_n, z)$ such that $\eta = \dd z - p_i \dd q^i$. In these coordinates, the Reeb vector field is written as $\Reeb = \partial/\partial z$, and the Hamiltonian vector field of $f\in \Cinfty(M)$ reads
\begin{equation}
    X_f = \frac{\partial f}{\partial p_i} \frac{\partial}{\partial q^i} - \left(\frac{\partial f}{\partial q^i}+p_i \frac{\partial f}{\partial z}\right) \frac{\partial }{\partial p_i} + \left(p_i \frac{\partial f}{\partial p_i} - f\right) \frac{\partial }{\partial z}\, .
\end{equation}

\begin{definition}
    Let $(M_1, \eta_1)$ and $(M_2, \eta_2)$ be contact manifolds. A diffeomorphism $\Phi\colon M_1 \to M_2$ is called a \emph{{conformal contactomorphism}}\footnote{{In the literature, sometimes conformal contactomorphisms are simply called contactomorphisms.}} if $\Phi^\ast\eta_2 = \sigma \eta_1$ for some nowhere-vanishing function $\sigma$ on $M_1$ called the \emph{conformal factor}. A \emph{strict contactomorphism}
    is a diffeomorphism $\Phi\colon M_1 \to M_2$ which preserves the contact forms, namely, $\Phi^\ast \eta_2 = \eta_1$.

    Let $(M, \eta)$ be a contact manifold.
    A vector field $X\in \X(M)$ is called an \emph{infinitesimal {conformal contactomorphism}} if its flow is a one-parameter group of {conformal contactomorphisms}. Equivalently, $\liedv{X} \eta = a \eta$ for some function $a$ on $M$ called the \emph{(infinitesimal) conformal factor}. An \emph{infinitesimal strict contactomorphism} is a vector field $X\in \X(M)$ whose flow is made up of strict  contactomorphisms, that is, $\liedv{X}\eta = 0$.
    
    Note that an (infinitesimal) strict contactomorphism is an (infinitesimal) {conformal contactomorphism} with (infinitesimal) conformal factor ($a\equiv0$) $\sigma\equiv1$. 
\end{definition}


    Let $(M,\eta)$ be a contact manifold. Then, $(M,\JacLambda,E)$ is a Jacobi manifold, where
    $$ \JacLambda(\alpha,\beta) = -\dd\eta(\flat^{-1}\alpha,\flat^{-1}\beta)\ ,\quad E = -\Reeb\,. $$
 Hence, the associated Jacobi bracket $\left\{\cdot, \cdot  \right\}$ is given by 
 \begin{equation*}
   \left\{f, g  \right\} 
   = \JacLambda(\dd f, \dd g)  + f E(g) - g E(f)
    = -\dd \eta (\flat^{-1} \dd f, \flat^{-1} \dd g)
    - f \Reeb(g) +g\Reeb(f),
 \end{equation*}
 for any pair of smooth functions $f$ and $g$ on $M$. Moreover, the characteristic distribution is the complete tangent space, namely, $C=\T M$.



{Given a contact manifold $(M, \eta)$, the morphism $\lsharp\colon \Omega^1(M) \to \X(M)$ defined by its Jacobi structure is given by $\lsharp(\alpha) = \flat^{-1} (\alpha) -\alpha(\Reeb) \Reeb$. Thus, the Hamiltonian vector field of $f\in \Cinfty(M)$ can be written as 
\begin{equation}
    X_f = \lsharp (\dd f) - f \Reeb\, .
\end{equation}
In particular, for each $f\in \Cinfty(M)$, its Hamiltonian vector field  with respect to $\eta$ coincides with its Hamiltonian vector field with respect to the associated Jacobi structure $(\Lambda, E)$.}


\begin{proposition}\label{prop:hamiltonian_bijection}
    Given a contact manifold $(M,\eta)$, the map $f \mapsto X_f$ is a Lie algebra anti-isomorphism between the set of smooth functions with the Jacobi bracket and the set of infinitesimal {conformal contactomorphisms} with the Lie bracket, namely,
    \begin{equation}
         \{f,g\} = - \eta\left([X_f, X_g]\right)\, ,
    \end{equation}
    for any $f,g \in \Cinfty(M)$. Its inverse is given by $X \mapsto - \eta(X)$. 
    
    Furthermore, $X_f$ is an infinitesimal strict contactomorphism if and only if $\Reeb(f) = 0$.
\end{proposition}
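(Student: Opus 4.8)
The plan is to prove the three assertions in turn: the bracket identity $\{f,g\} = -\eta([X_f, X_g])$, the fact that $f \mapsto X_f$ is a bijection onto the infinitesimal contactomorphisms with inverse $X \mapsto -\eta(X)$, and the criterion for strictness. For the bracket identity I would start from Cartan's formula for a one-form,
\[
  \dd\eta(X_f, X_g) = X_f\big(\eta(X_g)\big) - X_g\big(\eta(X_f)\big) - \eta\big([X_f, X_g]\big),
\]
and solve it for $\eta([X_f, X_g])$. Substituting $\eta(X_f) = -f$ and $\eta(X_g) = -g$ from \eqref{lieder_eta} turns this into $\eta([X_f,X_g]) = -X_f(g) + X_g(f) - \dd\eta(X_f,X_g)$, so the only remaining ingredient is $\dd\eta(X_f, X_g)$. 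This was essentially computed inside the proof of Proposition~\ref{prop:alternatve_brackets_contact}: from \eqref{Hamiltonian_vector_field} one has $\contr{X_f}\dd\eta = \dd f - \Reeb(f)\eta$, and contracting once more with $X_g$ gives $\dd\eta(X_f, X_g) = X_g(f) + g\Reeb(f)$. Substituting and cancelling the $X_g(f)$ terms leaves $\eta([X_f, X_g]) = -X_f(g) - g\Reeb(f)$, which is precisely $-\{f,g\}$ by the first expression in \eqref{eq_jac_bracket}. This part is pure sign bookkeeping, and the only point requiring attention is to reuse the intermediate computation rather than reproduce it.

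For the description of the image and the inverse, note first that \eqref{lieder_eta} already gives $\liedv{X_f}\eta = -\Reeb(f)\eta$, so every $X_f$ is an infinitesimal contactomorphism, and $\eta(X_f) = -f$ shows that $X \mapsto -\eta(X)$ is a left inverse; in particular $f \mapsto X_f$ is injective. The substantive step is surjectivity. Given an infinitesimal contactomorphism $X$, with $\liedv{X}\eta = a\eta$, I would set $f = -\eta(X)$ and verify $X = X_f$ by comparing their images under the isomorphism $\flat$. Using $\flat(X) = \contr{X}\dd\eta + \eta(X)\eta$ together with $\liedv{X}\eta = \contr{X}\dd\eta + \dd(\eta(X))$ one obtains $\flat(X) = \dd f + (a - f)\eta$, whereas \eqref{Hamiltonian_vector_field} gives $\flat(X_f) = \dd f - (\Reeb(f) + f)\eta$; these coincide exactly when $a = -\Reeb(f)$. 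I expect this identification of the conformal factor to be the main obstacle, and it is the one place where the defining properties of the Reeb field are essential: contracting $\liedv{X}\eta = a\eta$ with $\Reeb$ gives $a = \contr{\Reeb}\liedv{X}\eta$, and expanding with Cartan's formula while using $\contr{\Reeb}\dd\eta = 0$ and $\contr{\Reeb}\eta = 1$ collapses this to $a = \Reeb(\eta(X)) = -\Reeb(f)$, as required.

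The final claim is then immediate from the same Lie-derivative identity: $X_f$ is an infinitesimal strict contactomorphism if and only if $\liedv{X_f}\eta = 0$, and since $\liedv{X_f}\eta = -\Reeb(f)\eta$ with $\eta$ nowhere vanishing, this is equivalent to $\Reeb(f) = 0$. To close, I would observe that the bracket identity, read through the inverse $X \mapsto -\eta(X)$, encodes exactly the asserted compatibility between the Jacobi bracket on $\Cinfty(M)$ and the Lie bracket of infinitesimal contactomorphisms.
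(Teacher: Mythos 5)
Your proof is correct and follows essentially the same route as the paper's: both derive the bracket identity from the Cartan-type formula for $\dd\eta(X_f,X_g)$, though you substitute $\contr{X_f}\dd\eta = \dd f - \Reeb(f)\eta$ directly and invoke $\{f,g\}=X_f(g)+g\Reeb(f)$ from Proposition~\ref{prop:alternatve_brackets_contact}, where the paper detours through $\Lambda(\dd f,\dd g)$. The one place you go beyond the paper is surjectivity: the paper merely observes that $\eta(X)$ is a smooth function, whereas you actually verify, by comparing $\flat(X)$ with $\flat(X_{-\eta(X)})$ and identifying the conformal factor as $a=-\Reeb(f)$, that every infinitesimal contactomorphism equals $X_{-\eta(X)}$ --- a step the paper leaves implicit.
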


\begin{proof}
    Let $f\in \Cinfty(M)$. Since $\liedv{X_f} \eta = - \Reeb(f) \eta$, $X_f$ is an infinitesimal {conformal contactomorphism}, and it is an infinitesimal strict contactomorphism if and only if $\Reeb (f) = 0$. Conversely, for any $X\in \X(M)$, the contraction $f\coloneqq\eta(X)$ is a smooth function on $M$. Moreover, if $\liedv{X} \eta = \gamma \eta$, then
    $$\liedv{\Reeb} f = - \liedv{\Reeb}\contr{X} \eta = -\contr{[\Reeb, X]}\eta - \contr{X} \liedv{\Reeb} \eta = \contr{[X, \Reeb]} \eta
    =\liedv{X} \contr{\Reeb} \eta - \contr{\Reeb} \liedv{X} \eta = 
    - \contr{\Reeb} \left(\gamma \eta\right) = -\gamma\, .$$
     This proves that the map $f\mapsto X_f$ is a bijection.

    We shall now show that $X \to - \contr{X} \eta$ is an anti-isomorphism. Since it is a bijection (with inverse $f \mapsto X_f$) it is sufficient to show that it is an anti-homomorphism.
    We have that
    \begin{equation}
        \begin{aligned}
            \dd \eta(X_f,X_g) 
            & = X_f(\eta(X_g)) -  X_g (\eta(X_f)) -\contr{\lieBr{X_f,X_g}} \eta \\ &= 
            - X_f(g)  + X_g(f)   -\contr{\lieBr{X_f,X_g}} \eta \\ &=
            - \lsharp(\dd f)(g) + f \Reeb (g) + \lsharp(\dd g)(f) - g \Reeb(f) -\contr{\lieBr{X_f,X_g}} \eta \\ &= 
            -2 \JacLambda(\dd f, \dd g)+
             f\Reeb(g) - g \Reeb(f) -\contr{\lieBr{X_f,X_g}} \eta\, ,
        \end{aligned}
    \end{equation}
    since $\lsharp(\dd f)(g) = -\lsharp(\dd g)(f) = \JacLambda(\dd f, \dd g)$. Taking into account that
    \begin{equation}
        \JacLambda(\dd f, \dd g) = -\dd \eta (X_f, X_g)\, ,
    \end{equation}
    one can write
    \begin{equation}
        \JacLambda (\dd f, \dd g) = f\Reeb(g) - g \Reeb(f) -\contr{\lieBr{X_f,X_g}} \eta\, ,
    \end{equation}
    and then
    \begin{equation}
        \jacBr{f,g} 
        = \JacLambda(\dd f, \dd g) -f \Reeb (g) + g \Reeb(f)
        = - \contr{\lieBr{X_f,X_g}} \eta\, .
    \end{equation}
\end{proof}

\subsection{Contact Hamiltonian systems}
\begin{definition}
    A \emph{contact Hamiltonian system} is a triple $(M,\eta, h)$, where $(M, \eta)$ is a contact manifold and $h\colon M \to \R$ is a function called the \emph{Hamiltonian function}.
\end{definition}

 The dynamics of $(M,\eta, h)$ is given by the integral curves of the Hamiltonian vector field $X_h$ of $h$ with respect to $\eta$. In Darboux coordinates, these integral curves $c\colon I\subseteq \RR \to M,\, c(t)=(q^i(t), p_i(t), z(t))$ are determined by the \emph{contact Hamilton equations}:
 \begin{equation}
    \frac{\dd q^i}{\dd t} (t) = \frac{\partial h}{\partial p_i} \circ c(t)\, , \quad
    \frac{\dd p_i}{\dd t} (t) = -\frac{\partial h}{\partial q^i} \circ c(t) - p_i(t) \frac{\partial h}{\partial z} \circ c(t)\, , \quad
    \frac{\dd z}{\dd t} (t) =  p_i(t) \frac{\partial p_i}{\partial z} \circ c(t) - h\circ c(t)\, .
 \end{equation}

\begin{definition}
    Consider a contact Hamiltonian system $(M, \eta, h)$, and let $X_h$ denote the Hamiltonian vector field of $h$ with respect to $\eta$. A \emph{dissipated quantity (with respect to $(M, \eta, h)$)} is a function $f\in \Cinfty(M)$ such that $X_h(f)=-\Reeb(h) f$. A \emph{conserved quantity (with respect to $(M, \eta, h)$)} is a function $f$ on $M$ such that $X_h(f)=0$.
\end{definition}

Let $\{\cdot, \cdot\}$ denote the Jacobi bracket defined by $\eta$.
Observe that a function $f$ on $M$ is a dissipated quantity if and only if $\{f, h\} = 0$. In particular, $h$ is a dissipated quantity.
If $f_1$ and $f_2$ are dissipated quantities, then $f_1/f_2$ is a conserved quantity on the open subset $U=M\setminus f_2^{-1}(0)\subseteq M$.

\section{Liouville--Arnold theorem for exact symplectic manifolds}\label{sec:theorem_exact_symp}

An \emph{exact symplectic manifold} is a pair $(M, \theta)$, where $M$ is a manifold and $\theta$ a one-form on $M$ called the \emph{symplectic potential} such that $\omega= -\dd \theta$ is a symplectic form on $M$. The \emph{Liouville vector field} $\Delta$ of $(M, \theta)$ is given by 
\begin{equation}\label{eq:Liouville_vf}
    \contr{\Delta} \omega = -\theta \, .
\end{equation}
Given an integer $k$, a tensor field $A$ on $M$ is called \emph{homogeneous of degree $k$} if $\liedv{\Delta} A = k A$.

 Given a function $f$ on $(M,\theta)$, its \emph{(symplectic) Hamiltonian vector field} $X_f$ is given by
 \begin{equation}
     \contr{X_f} \omega = \dd f\, .
 \end{equation}
 If $f$ is homogeneous of degree 1, then
 \begin{equation}
     \theta(X_f) = f\, .
 \end{equation}

{ An exact symplectic manifold is canonically equipped with a Liouville vector field uniquely determined by \eqref{eq:Liouville_vf}. Conversely, if a symplectic manifold $(M, \omega)$ is endowed with a distinguished vector field $\Delta$ --the so-called weight vector field-- such that $\liedv{\Delta}\omega = \omega$, then $\theta = - \contr{\Delta} \omega$ is a symplectic potential for $\omega$. This idea has been studied in the context of Hamiltonian systems with scaling symmetries \cite{B.J.S2023,B.G.M+2024}. For more details about weight vector fields and homogeneous objects, see \cite{G.G.R2022,G.G2024} and references therein.}

 Hereinafter, it will be assumed that the exact symplectic manifolds have no \emph{singular points}, that is, those in which $\Delta$ (or, equivalently, $\theta$) vanish. Notice that those points form a closed subset which is contained on a submanifold of dimension at most half of the dimension of the original manifold. In the case that $M= \cT  Q$ is a cotangent bundle with $\theta_Q$ the canonical one-form and $\Delta_Q$ the canonical Liouville vector field, the zero section would be removed. Recall that, if $Q$ has local coordinates $(q^i)$ and $(q^i, p_i)$ are the induced bundle coordinates on $\cT Q$, then $\theta_Q = p_i \dd q^i$ and $\Delta_Q = p_i \partial/\partial p_i$.

Let $\{\cdot, \cdot\}_{\theta}$ denote the Poisson bracket defined by the symplectic structure $\omega=-\dd \theta$, that is,
\begin{equation}
    \{f, g\}_{\theta} = \omega(X_f, X_g)\, ,
\end{equation}
for each $f, g\in \Cinfty(M)$. Given the functions $f_i\colon M\to \R,\ i=1,\ldots, n$, define the map 
\begin{equation}
    F = \left(f_1, \ldots, f_n\right) \colon M \to \R^{n}\, ,
\end{equation}
and the level set $M_\lambda = F ^{-1}(\lambda)$, for each $\lambda\in \R^{n}$.

\begin{definition}
    Given two exact symplectic manifolds $(M_1, \theta_1)$ and $(M_2, \theta_2)$, a \emph{homogeneous symplectomorphism} is a diffeomorphism $\Phi\colon M_1 \to M_2$ such that $\Phi^\ast \theta_2 = \theta_1$. 
    
    Given a symplectic manifold $(M, \theta)$, an \emph{infinitesimal homogeneous symplectomorphism} is a vector field $X\in \X(M)$ whose flow is a one-parameter group of homogenous symplectomorphisms or, equivalently, $\liedv{X}\theta = 0$.
\end{definition}

Clearly, every (infinitesimal) homogeneous symplectomorphism is an (infinitesimal) symplectomorphism. Moreover, if $\Delta_1$ and $\Delta_2$ are the Liouville vector fields of $(M_1, \theta_1)$ and $(M_2, \theta_2)$, and $\Phi\colon M_1 \to M_2$ is a homogeneous symplectomorphism, then $\Phi_\ast \Delta_1 = \Delta_2$.

\begin{proposition}\label{proposition:homogeneous_Hamiltonian}
    Let $(M, \theta)$ be an exact symplectic manifold.
    Given a vector field $Y\in \X(M)$, the following statements are equivalent:
    \begin{enumerate}
        \item \label{item:homogeneous_symp} $Y$ is an infinitesimal homogeneous symplectomorphism,
        \item \label{item:commute_Delta} $Y$ is an infinitesimal symplectomorphism and commutes with the Liouville vector field $\Delta$,
        \item \label{item:Hamiltonian} $Y$ is the Hamiltonian vector field of $f= \theta(Y)$ and $f$ is a homogeneous function of degree $1$.
    \end{enumerate}
\end{proposition}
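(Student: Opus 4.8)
The plan is to prove the cyclic chain of implications \ref{item:homogeneous_symp}$\,\Rightarrow\,$\ref{item:commute_Delta}$\,\Rightarrow\,$\ref{item:Hamiltonian}$\,\Rightarrow\,$\ref{item:homogeneous_symp}. Before starting I would record two facts about the Liouville vector field, to be used repeatedly: that $\theta(\Delta)=0$ (contract $\contr{\Delta}\omega=-\theta$ with $\Delta$ and use the antisymmetry of $\omega$), and that $\theta$ is itself homogeneous of degree one, $\liedv{\Delta}\theta=\theta$ (by Cartan's formula, $\liedv{\Delta}\theta=\contr{\Delta}\dd\theta+\dd\contr{\Delta}\theta=\contr{\Delta}(-\omega)=\theta$, the second term vanishing since $\theta(\Delta)=0$). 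I would also use the operator identity $\contr{[X,Y]}=\liedv{X}\contr{Y}-\contr{Y}\liedv{X}$ on forms together with the nondegeneracy of $\omega$.

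For \ref{item:homogeneous_symp}$\,\Rightarrow\,$\ref{item:commute_Delta}: from $\liedv{Y}\theta=0$ I immediately get $\liedv{Y}\omega=-\dd\liedv{Y}\theta=0$, so $Y$ is an infinitesimal symplectomorphism; and applying the commutator identity to $\omega$ gives $\contr{[Y,\Delta]}\omega=\liedv{Y}\contr{\Delta}\omega-\contr{\Delta}\liedv{Y}\omega=-\liedv{Y}\theta=0$, whence $[Y,\Delta]=0$ by nondegeneracy.

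The implication \ref{item:commute_Delta}$\,\Rightarrow\,$\ref{item:Hamiltonian} is the heart of the matter: I must upgrade the weaker hypotheses ($Y$ symplectic and commuting with $\Delta$) to the strict identity $\liedv{Y}\theta=0$. The idea is to study the one-form $\beta=\liedv{Y}\theta$ and show it vanishes. It is closed, since $\dd\beta=\liedv{Y}\dd\theta=-\liedv{Y}\omega=0$; and it is homogeneous of degree one, since $\liedv{\Delta}\beta=\liedv{\Delta}\liedv{Y}\theta=\liedv{Y}\liedv{\Delta}\theta=\liedv{Y}\theta=\beta$, using $[\Delta,Y]=0$. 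A closed, degree-one-homogeneous one-form is necessarily exact: indeed $\beta=\liedv{\Delta}\beta=\contr{\Delta}\dd\beta+\dd\contr{\Delta}\beta=\dd(\contr{\Delta}\beta)$. It then remains to see that its potential vanishes, $\contr{\Delta}\beta=\contr{\Delta}\liedv{Y}\theta=\liedv{Y}\contr{\Delta}\theta=\liedv{Y}(\theta(\Delta))=0$, again by $[\Delta,Y]=0$ and $\theta(\Delta)=0$. Hence $\beta=0$, so $\contr{Y}\omega=-\contr{Y}\dd\theta=\dd(\theta(Y))-\liedv{Y}\theta=\dd(\theta(Y))$, i.e. $Y=X_f$ with $f=\theta(Y)$; and $f$ is homogeneous of degree one because $\liedv{\Delta}f=(\liedv{\Delta}\theta)(Y)+\theta([\Delta,Y])=\theta(Y)=f$.

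Finally, \ref{item:Hamiltonian}$\,\Rightarrow\,$\ref{item:homogeneous_symp} is a direct computation: with $Y=X_f$ and $f=\theta(Y)$, Cartan's formula gives $\liedv{Y}\theta=\contr{X_f}\dd\theta+\dd\contr{X_f}\theta=-\contr{X_f}\omega+\dd(\theta(X_f))=-\dd f+\dd f=0$; notably this step does not even require the homogeneity of $f$, only $f=\theta(Y)$. I expect the only genuine difficulty to be the exactness argument in \ref{item:commute_Delta}$\,\Rightarrow\,$\ref{item:Hamiltonian}, where recognizing $\liedv{Y}\theta$ as a closed, degree-one-homogeneous one-form and then showing its potential $\contr{\Delta}(\liedv{Y}\theta)$ vanishes is the crucial maneuver; the remaining implications are formal consequences of Cartan's magic formula and the nondegeneracy of $\omega$.
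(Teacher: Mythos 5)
Your proof is correct --- every step checks out --- but the route you take through \ref{item:commute_Delta}$\,\Rightarrow\,$\ref{item:Hamiltonian} is longer than necessary and differs from the paper's. The paper organizes the argument as \ref{item:homogeneous_symp}$\,\Leftrightarrow\,$\ref{item:commute_Delta}, then \ref{item:homogeneous_symp}$\,\Rightarrow\,$\ref{item:Hamiltonian} and \ref{item:Hamiltonian}$\,\Rightarrow\,$\ref{item:commute_Delta}, and the entire content of the equivalence \ref{item:homogeneous_symp}$\,\Leftrightarrow\,$\ref{item:commute_Delta} is the single identity $\contr{[Y,\Delta]}\omega = \liedv{Y}\contr{\Delta}\omega - \contr{\Delta}\liedv{Y}\omega = -\liedv{Y}\theta$, valid for any infinitesimal symplectomorphism $Y$: read one way it gives $[Y,\Delta]=0$ from $\liedv{Y}\theta=0$, read the other way it gives $\liedv{Y}\theta=0$ from $[Y,\Delta]=0$. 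You prove exactly this identity in your step \ref{item:homogeneous_symp}$\,\Rightarrow\,$\ref{item:commute_Delta}, so the ``crucial maneuver'' you single out --- showing that $\beta=\liedv{Y}\theta$ is closed and homogeneous of degree one, hence exact with potential $\contr{\Delta}\beta=0$ --- is a correct but avoidable detour: the identity you already established yields $\liedv{Y}\theta=-\contr{[Y,\Delta]}\omega=0$ in one line. (Your detour does have the virtue of isolating the general fact that a closed, degree-one-homogeneous one-form equals $\dd(\contr{\Delta}\beta)$, which is a reusable observation.) On the other hand, your write-up is more careful than the paper's in two places: you verify explicitly that $f=\theta(Y)$ is homogeneous of degree one via $\liedv{\Delta}f=(\liedv{\Delta}\theta)(Y)+\theta([\Delta,Y])=f$, whereas the paper's proof of \ref{item:homogeneous_symp}$\,\Rightarrow\,$\ref{item:Hamiltonian} only establishes $\contr{Y}\omega=\dd f$ and leaves the homogeneity of $f$ unaddressed; and you observe that \ref{item:Hamiltonian}$\,\Rightarrow\,$\ref{item:homogeneous_symp} needs only $Y=X_{\theta(Y)}$ and not the homogeneity of $f$ (which is then automatic), whereas the paper's closing implication \ref{item:Hamiltonian}$\,\Rightarrow\,$\ref{item:commute_Delta} does invoke homogeneity through $\liedv{\Delta}\dd f=\dd f$.
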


\begin{proof}
    The first step is to show the equivalence between statements \ref{item:homogeneous_symp} and \ref{item:commute_Delta}.
    Suppose that $Y\in \X(M)$ is an infinitesimal symplectomorphism. Then,
    \begin{equation}
        \contr{[Y, \Delta]} \omega = \liedv{Y} \contr{\Delta} \omega - \contr{\Delta} \liedv{Y} \omega = - \liedv{Y} \theta\, ,
    \end{equation}
    which implies that $Y$ is an infinitesimal homogeneous symplectomorphism if and only if it commutes with $\Delta$.

    The next step is to show that \ref{item:homogeneous_symp} implies \ref{item:Hamiltonian}.
    Suppose that $Y\in \X(M)$ is an infinitesimal homogeneous symplectomorphism. Then, 
    \begin{equation}
        0 = \liedv{Y} \theta = \contr{Y} \dd \theta + \dd \contr{Y} \theta\, ,
    \end{equation} 
    which implies that 
    \begin{equation}
        \contr{Y} \omega = \dd f\, ,
    \end{equation}
    for $f = \contr{Y} \theta$.

    
    The last step is to show that \ref{item:Hamiltonian} implies \ref{item:commute_Delta}. Suppose that $f\in \Cinfty(M)$ is a homogeneous function of degree $1$ and $X_f$ is its Hamiltonian vector field. Then, 
    \begin{equation}
        \contr{[\Delta, X_f]} \omega = \liedv{\Delta} \contr{X_f} \omega - \contr{X_f} \liedv{\Delta} \omega = \liedv{\Delta} \dd f - \dd f = 0\, . 
    \end{equation}
\end{proof}

On this context, we have a notion of integrable system:
\begin{definition}\label{def:integrable_systems_homogeneous}
    A \emph{homogeneous integrable system} is a triple $(M, \theta, F)$, where $(M, \theta)$ is an exact symplectic manifold and $F=(f_1, \ldots, f_n)\colon M\to \RR^{n}$ is a map such that the functions $f_1, \ldots, f_n$ are independent, in involution (i.e., $\jacBr{f_i,f_j}_\theta=0$) and homogeneous of degree $1$ on $M$. The functions $f_1, \ldots, f_n$ are called \emph{integrals}.
\end{definition}

If there are singularities, such as critical points of the integrals, on a zero-measure subset $S$ of $M$, it suffices to restrict to the dense open subset $M_{0} = M\setminus S$.

\begin{theorem}[Liouville--Arnold theorem for exact symplectic manifolds]
\label{theorem:LA_exact_symp}
Let $(M, \theta, F)$ be a $2n$-dimensional homogeneous integrable system with $F = {(f_i)}_{i=1}^n$. Given $\lambda\in \RR^n$, assume that $M_\lambda = F^{-1}(\lambda)$ is connected (if not, replace $M_\lambda$ by a connected component).
Let $U$ be an open neighbourhood of $M_\lambda$ such that:
\begin{enumerate}
    \item $f_1, \dots, f_n$ have no critical points in $U$,
    \item the Hamiltonian vector fields $X_{f_1}, \dotsc, X_{f_n}$ are complete,
    \item the submersion $F\colon U \to \R^n$ is a trivial bundle over a domain $V\subseteq \R^n$.
\end{enumerate}
Then, $U$ is diffeomorphic to $W=\TT^k \times \R^{n-k}\times V$, where $\TT^k$ denotes the $k$-dimensional torus. Furthermore, there is a chart $(\hat{U}\subseteq U; y^i, A_i)$ of $M$
such that:
\begin{enumerate}
    \item $A_i = M_i^j f_j$, where $M_i^j$ are homogeneous functions of degree $0$ depending only on $f_1, \ldots, f_n$,
    \item $(y^i, A_i)$ are canonical coordinates for $\theta$, i.e., $ \theta = A_i \dd y^i$,
    \item the Hamiltonian vector fields of the functions $f_i$ read 
    \begin{equation}
        X_{f_i} = N_i^j \frac{\partial}{\partial y^j}\, ,
    \end{equation}
    with $(N_i^j)$ the inverse matrix of $(M_i^j)$.
\end{enumerate}
In this chart, the integral curves of $X_{f_j}$ are given by
\begin{equation}
\begin{aligned}
    & \dot y^i = \Omega^i (A_1, \ldots, A_n)\, ,\\
   & \dot A_i = 0\,  ,
   \quad i= 1,\ldots, n\, ,
\end{aligned}
\end{equation}
where $\Omega^i\coloneqq N^i_j$.
\end{theorem}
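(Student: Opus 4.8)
The plan is to run the classical Liouville--Arnold argument while keeping track of the extra rigidity supplied by the Liouville vector field $\Delta$, so as to upgrade the conclusion from $\omega$-preservation to $\theta$-preservation. First I would settle the geometry of the fibration. Since the $f_i$ are independent and in involution, each connected fibre $M_\Lambda$ is an $n$-dimensional submanifold on which the Hamiltonian vector fields $X_{f_i}$ are tangent (because $X_{f_i}(f_j) = \{f_j, f_i\}_\theta = 0$), pairwise commuting (as $\{f_i,f_j\}_\theta = 0$ forces $[X_{f_i},X_{f_j}] = 0$) and pointwise independent; hence $M_\Lambda$ is Lagrangian. Completeness of the $X_{f_i}$ turns their joint flow into an $\R^n$-action, transitive on the connected fibre, so $M_\Lambda \cong \R^n/\Gamma_\Lambda$ for a discrete isotropy lattice $\Gamma_\Lambda$ of rank $k$, whence $M_\Lambda \cong \TT^k\times\R^{n-k}$. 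Using the trivialization $F\colon U\to V$ and the smooth dependence of $\Gamma_\Lambda$ on $\Lambda$, the action extends fibrewise and yields $U\cong W = \TT^k\times\R^{n-k}\times V$. Fixing a section and taking joint flow-times gives coordinates $(t^i, f_i)$ in which $X_{f_i} = \partial/\partial t^i$ and $\theta(X_{f_i}) = f_i$, the latter because each $f_i$ is homogeneous of degree $1$.

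Next I would bring in the homogeneous structure. By Proposition~\ref{proposition:homogeneous_Hamiltonian}, each $X_{f_i}$ commutes with $\Delta$, and since $\Delta(f_i) = f_i$ the field $\Delta$ is $F$-related to the radial (Euler) vector field on $V$. I would choose the section to be simultaneously $\Delta$-invariant and isotropic: build it over a transversal $\Sigma = \{\abs{\Lambda} = 1\}\cap V$ so that $s^\ast\theta = 0$ there (whence $s^\ast\omega = -\d s^\ast\theta = 0$), and extend it by the flow of $\Delta$; the resulting cone is Lagrangian and $\Delta$-invariant. Because $\Delta$ commutes with the $X_{f_i}$ and preserves the section, the flow-time coordinates become homogeneous of degree $0$, i.e. $\Delta(t^i) = 0$. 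The action coordinates are defined by integrating $\theta$ over the lattice generators in the compact directions and taking the $f_i$ themselves in the non-compact ones; since $\theta|_{M_\Lambda} = f_j\,\d t^j$ and $\liedv{\Delta}\theta = \theta$, these are linear in the $f_j$ and homogeneous of degree $1$, so $A_i = M_i^j f_j$ with $M_i^j = \partial A_i/\partial f_j$ homogeneous of degree $0$. Euler's identity for the degree-$1$ functions $A_i$ gives $f_k\,\partial M_i^k/\partial f_j = 0$, which is exactly what makes $(M_i^j)$ the Jacobian of $A$ with respect to $f$, hence invertible with inverse $(N_i^j) = (\partial f_i/\partial A_j)$. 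Replacing $t^i$ by the flow-times $y^i$ of the commuting fields $X_{A_i} = M_i^j X_{f_j}$ yields $X_{A_i} = \partial/\partial y^i$, $X_{f_i} = N_i^j\,\partial/\partial y^j$, and, since the $X_{A_i}$ also commute with $\Delta$, $\Delta(y^i) = 0$.

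Finally I would show that the chart $(y^i, A_i)$, viewed as a map $\Psi$ from $U$ to the model $W$ equipped with $\theta_W = A_i\,\d y^i$ (so $\omega_W = \d y^i\wedge\d A_i$ and Liouville field $\Delta_W = A_i\,\partial/\partial A_i$), is a homogeneous symplectomorphism. The key observation is that $\theta$-preservation is automatic once $\Psi$ preserves $\omega$ and intertwines the Liouville fields: indeed $\theta = -\contr{\Delta}\omega = -\contr{\Delta}\Psi^\ast\omega_W = -\Psi^\ast\contr{\Delta_W}\omega_W = \Psi^\ast\theta_W$. Intertwining of Liouville fields, $\Psi_\ast\Delta = \Delta_W$, is precisely the content of $\Delta(y^i) = 0$ and $\Delta(A_i) = A_i$, both already secured. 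Preservation of $\omega$ reduces to the vanishing of $E_{ij} = \omega(\partial/\partial A_i, \partial/\partial A_j)$: from $\contr{X_{A_i}}\omega = \d A_i$ and $X_{A_i} = \partial/\partial y^i$ one gets $\omega = \d y^i\wedge\d A_i + \tfrac12 E_{ij}\,\d A_i\wedge\d A_j$ with $E$ independent of $y$ by $\d\omega = 0$, and the isotropy of the section (the locus $y = 0$, whose tangent space is spanned by the $\partial/\partial A_i$) forces $E\equiv 0$. Hence $\theta = A_i\,\d y^i$, the coordinates are canonical, $X_{f_i} = N_i^j\,\partial/\partial y^j$, and the flows linearize as $\dot y^i = \Omega^i(A_1,\dots,A_n)$ with $\Omega^i \coloneqq N^i_j$ and $\dot A_i = 0$.

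I expect the main obstacle to be precisely the simultaneous construction of a section that is both $\Delta$-invariant (to force $\Delta(y^i) = 0$, hence the homogeneous normal form $\theta = A_i\,\d y^i$ rather than merely $\omega = \d y^i\wedge\d A_i$) and isotropic (to kill the term $E_{ij}$), together with verifying that the period lattice $\Gamma_\Lambda$, and therefore the matrix $(M_i^j)$, depends smoothly on $\Lambda$ over $V$. Once these are in place, the remaining identities — the linearity $A_i = M_i^j f_j$, the inverse relation $N = M^{-1}$, and the form of $X_{f_i}$ — are routine consequences of Euler's theorem and the bracket relations, and the potential-preservation is free from the contraction identity $\theta = -\contr{\Delta}\omega$.
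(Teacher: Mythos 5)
Your overall architecture coincides with the paper's: commuting complete Hamiltonian fields give $M_\Lambda\cong\TT^k\times\RR^{n-k}$ (the paper's Lemma~\ref{lemma:Arnold_tangent_fields}), the actions are degree-$1$ homogeneous combinations $A_i=M_i^jf_j$ obtained by adapting the generators to the splitting (Lemma~\ref{lemma:linear_combination_functions} — your period integrals $\oint\theta$ over lattice generators give the same functions, since $\theta(X_{A_i})=A_i$ and the normalized flows have period $1$), the angles are flow times from a section $\chi$ with $\chi^\ast\theta=0$, and the normal form $\theta=A_i\,\dd y^i$ then follows by a short computation. Your one genuinely different ingredient is deducing $\Psi^\ast\theta_W=\theta$ from $\omega$-preservation plus intertwining of Liouville fields via $\theta=-\contr{\Delta}\omega$; the paper instead gets this directly by writing $\theta=A_i\,\dd y^i+B^i\,\dd f_i$ in the adapted coordinates and computing $A_i=\theta(X_{f_i})=f_i$ (homogeneity) and $B^i=0$ (from $\chi^\ast\theta=0$ and invariance). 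Both routes work and cost about the same.

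The genuine gap is the existence of the section with $\chi^\ast\theta=0$, which you assert ("build it over a transversal $\Sigma=\{\abs{\Lambda}=1\}\cap V$ so that $s^\ast\theta=0$ there") and flag as the main obstacle, but never establish. This is not a routine transversality statement: a section over the $(n-1)$-dimensional $\Sigma$ tangent to $\ker\theta$ is an integral manifold of the horizontal part of $\ker\theta$ transverse to the fibres, and its existence requires both an integrability argument and a global (monodromy-free) one. This is precisely where the paper invests its technical effort: it builds the invariant horizontal distribution $\Ldist^\theta=\bigl(\Ldist\cap\gen{X_{f_i}}_{i=1}^n\bigr)^{\perp_g}\cap\Ldist$ using a flat invariant metric on $V\times\TT^k\times\RR^{n-k}$, shows it is a principal connection whose horizontal distribution is integrable, and invokes Lemma~\ref{lemma:global_section} to obtain a global flat section over a simply connected base. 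Everything downstream in your argument (the $\Delta$-invariant Lagrangian cone, $\Delta(y^i)=0$, the vanishing of $E_{ij}$ on the section) depends on this unproven step, so the proposal as written does not close; supplying the integrability-plus-flat-section argument (or an equivalent) is required.
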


The coordinates $(y^i)$ and $(A_i)$, for $i=1,\dotsc n$, are called \emph{angle coordinates} and \emph{action coordinates}, respectively.

The process for constructing the action-angle coordinates can be retrieved from the proof of the theorem.

\begin{remark}[Construction of action-angle coordinates]\label{remark:construction_coords_exact}
    In order to construct action-angle coordinates in a neighbourhood $U$ of $M_{\lambda}$, one has to carry out the following steps:
    \begin{enumerate}
        \item Fix a section $\chi$ of $F\colon U \to V$ such that $\chi^\ast \theta = 0$.
        \item Compute the flows $\phi_t^{X_{f_i}}$ of the Hamiltonian vector fields $X_{f_i}$.
        \item Let $\Phi\colon \R^{n}\times M \to M$ denote the action of $\R^{n}$ on $M$ defined by the flows, namely,
        $$\Phi(t_1, \ldots, t_n; x) = \phi_{t_1}^{X_{f_1}}\circ \cdots \circ\phi_{t_n}^{X_{f_n}}(x)\, .$$
        \item It is well-known that the isotropy subgroup $G_{\chi(\lambda)(\lambda)}=\{g\in \R^{n}\mid \Phi(g,\chi(\lambda))=\chi(\lambda)\}$, forms a lattice (i.e., a $\mathbb{Z}$-submodule of $\R^{n}$). Pick a $\mathbb{Z}$-basis $\{e_1,\ldots, e_m\}$, where $m$ is the rank of the isotropy subgroup.
        \item Complete it to a basis $\mathcal{B}=\{e_1, \ldots, e_m, e_{m+1}, \ldots, e_{n}\}$ of $\R^{n}$.
        \item Let $(M_i^j)$ denote the matrix of change from the basis $\{X_{f_1}(\chi(\lambda)), \ldots, X_{f_n}(\chi(\lambda))\}$ of $\T_{\chi(\lambda)} M_\lambda\simeq \RR^{n}$ to the basis $\{e_i\}$. The action coordinates are the functions $A_i = M_i^j f_j$.
        \item The angle coordinates $(y^i)$ of a point $x\in M$ are the solutions of the equation
        \begin{equation}
             x = \Phi\big(y^i e_i;\chi \circ F(x)\big)\, .
        \end{equation}
    \end{enumerate}
 \end{remark}

In order to prove Theorem~\ref{theorem:LA_exact_symp} some lemmas will be required.

\begin{lemma}\label{lemma:Arnold_tangent_fields}
    Let $M$ be an $n$-dimensional differentiable manifold and let $X_1, \ldots, X_n\in \X(M)$ be linearly independent vector fields. If these vector fields {pairwise commutate and are} complete, then $M$ is diffeomorphic to $\TT^k\times \R^{n-k}$ for some $k\leq n$, where $\TT^k$ denotes the $k$-dimensional torus.
\end{lemma}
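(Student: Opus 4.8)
The plan is to build the classical $\R^n$-action from the commuting complete flows and identify $M$ with a quotient of $\R^n$ by a lattice. First I would invoke completeness to obtain globally defined flows $\phi_t^{X_i}\colon M\to M$ for each $i$, and commutativity $[X_i,X_j]=0$ to guarantee that these flows commute, $\phi_s^{X_i}\circ\phi_t^{X_j}=\phi_t^{X_j}\circ\phi_s^{X_i}$. This yields a smooth action
\begin{equation}
    \Phi\colon \R^n\times M\to M\, ,\qquad \Phi(t_1,\ldots,t_n;x)=\phi_{t_1}^{X_1}\circ\cdots\circ\phi_{t_n}^{X_n}(x)\, ,
\end{equation}
of the abelian group $(\R^n,+)$ on $M$.

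Next I would fix a base point $x_0\in M$ and study the orbit map $\Phi_{x_0}\colon \R^n\to M$, $t\mapsto \Phi(t;x_0)$. Its differential at the origin sends the standard basis vector $e_i$ to $X_i(x_0)$; since the $X_i$ are everywhere linearly independent and $\dim M=n$, this differential is a linear isomorphism. Equivariance of $\Phi_{x_0}$ under translations (the differential at any $t$ sends $e_i$ to $X_i(\Phi_{x_0}(t))$, again a basis) then promotes this to the statement that $\Phi_{x_0}$ is a local diffeomorphism at every point of $\R^n$. Consequently each orbit is open in $M$; as $M$ is connected, the action is transitive and $\Phi_{x_0}$ is surjective.

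The core of the argument is the analysis of the isotropy subgroup $G=\{t\in\R^n\mid \Phi(t;x_0)=x_0\}$. Because $\Phi_{x_0}$ is a local diffeomorphism, the identity $0\in\R^n$ is isolated in $G$, so $G$ is a discrete subgroup of $\R^n$; by the classification of such subgroups it is a lattice, i.e.\ there is a $\ZZ$-basis $\{e_1,\ldots,e_k\}$ of $G$ with $k\leq n$ consisting of $\R$-linearly independent vectors, which I would complete to a basis of $\R^n$. The orbit map then descends to a smooth bijection $\overline{\Phi}_{x_0}\colon \R^n/G\to M$, which is a local diffeomorphism (as $\Phi_{x_0}$ is), hence a diffeomorphism. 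Finally, in the chosen basis $\R^n/G\cong (\R/\ZZ)^k\times\R^{n-k}=\TT^k\times\R^{n-k}$, giving the result.

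The main obstacle I anticipate is the passage from \enquote{the orbits are open} to transitivity and then to the quotient description: it relies on $M$ being connected (inherited here from the connectedness of $M_\Lambda$), and on checking carefully that $\overline{\Phi}_{x_0}$ is well defined, injective, and a local diffeomorphism, so that the \emph{bijective local diffeomorphism is a diffeomorphism} argument applies. The classification of discrete subgroups of $\R^n$ as lattices is the other technical ingredient, though it is standard.
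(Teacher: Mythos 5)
Your proposal is correct and follows essentially the same route as the proof the paper defers to (Lemmas 1.3 and 1.4 of Bolsinov--Fomenko): the $\R^n$-action from commuting complete flows, transitivity via openness of orbits on a connected manifold, and the classification of the discrete isotropy subgroup as a lattice. You also rightly flag that connectedness of $M$ is an implicit hypothesis (supplied in the paper's application by the assumption that $M_\Lambda$ is connected), without which the statement fails.
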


See Lemmas 1.3 and 1.4 in reference \cite{Bolsinov2004} for the proof.



\begin{lemma}\label{lemma:linear_combination_functions}
    Let $(M, \theta, (f_i)_{i=1}^n)$ be a $2n$-dimensional homogeneous integrable system. Assume that their Hamiltonian vector fields $X_{f_i}$ are complete. Then, there exists functions $(g_i)_{i=1}^n$ functionally dependent on the $f_i$ such that $(M, \theta, (g_i)_{i=1}^n)$ is a homogeneous integrable system, and, moreover,
        \begin{enumerate}
        \item $X_{g_1}, \ldots, {X_{g_k}}$ are infinitesimal generators of $\Sp^1$-actions and their flows have period 1,
        \item $X_{g_{k+1}}, \ldots, X_{g_n}$ are infinitesimal generators of $\RR$-actions.
    \end{enumerate}
    Here $k$ is the dimension of the isotropy subgroup from the action generated by the Hamiltonian vector fields $X_{f_i}$. 

     These functions are given by $g_i = M^j_i f_j \in \Cinfty(M)$, where $M^j_i$ for $i, j\in {1, \ldots, n}$ are homogeneous functions of degree $0$ and they depend only on $f_1, \ldots, f_n$.
\end{lemma}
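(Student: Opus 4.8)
The plan is to regard the $n$ complete, pairwise commuting fields $X_{f_1},\dots,X_{f_n}$ as the infinitesimal generators of an $\R^n$-action and to \emph{straighten} this action by a linear change of basis that varies with the level set, choosing the new generators to be themselves homogeneous Hamiltonian vector fields. First I would observe that, since the $f_i$ are in involution, the $X_{f_i}$ commute and, having no critical points, are linearly independent; because $\{f_i,f_j\}_\theta=0$ they are tangent to each $M_\Lambda=F^{-1}(\Lambda)$, and being $n$ of them on the $n$-dimensional $M_\Lambda$ they span $\T_x M_\Lambda$. Hence the flows assemble into an action $\Phi\colon\R^n\times M\to M$ preserving the level sets and acting transitively on each connected $M_\Lambda$, whose isotropy $G(\Lambda)$ is a lattice of some rank $k$; by Lemma~\ref{lemma:Arnold_tangent_fields}, $M_\Lambda\cong\TT^k\times\R^{n-k}$.

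Next I would exploit homogeneity to control the $\Lambda$-dependence of the lattice. Since each $f_i$ is homogeneous of degree $1$, Proposition~\ref{proposition:homogeneous_Hamiltonian} gives $[\Delta,X_{f_i}]=0$, so the Liouville flow $\phi_\tau^\Delta$ commutes with every $X_{f_i}$ and maps $M_\Lambda$ diffeomorphically onto $M_{e^\tau\Lambda}$. Consequently $G(e^\tau\Lambda)=G(\Lambda)$: the period lattice depends only on the ray through $\Lambda$. I would then pick a smooth $\ZZ$-basis $e_1(\Lambda),\dots,e_k(\Lambda)$ of $G(\Lambda)$ and complete it to a basis $e_1(\Lambda),\dots,e_n(\Lambda)$ of $\R^n$, all homogeneous of degree $0$ in $\Lambda$, set $M_i^j(\Lambda)=e_i^j(\Lambda)$, and define $g_i=M_i^j f_j$.

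Involution and independence are then immediate: since each $g_i$ is a function of $f_1,\dots,f_n$ alone, writing $g_i=G_i(f_1,\dots,f_n)$ with $G_i(\Lambda)=\langle e_i(\Lambda),\Lambda\rangle$ gives $X_{g_i}=\sum_m \partial_m G_i(f)\,X_{f_m}$, so $\{g_i,g_l\}_\theta$ is a linear combination of the vanishing brackets $\{f_m,f_p\}_\theta$; and invertibility of $(M_i^j)$ makes the $g_i$ independent wherever the $f_i$ are, while homogeneity of degree $1$ holds because the $M_i^j$ have degree $0$. The heart of the matter is the flow behaviour: I must show that on $M_\Lambda$ one has $X_{g_i}=\sum_j e_i^j(\Lambda)\,X_{f_j}$, for then its time-$1$ flow equals $\Phi(e_i(\Lambda),\cdot)$, which is the identity for $i\le k$ (a period-$1$ closed orbit, i.e.\ an $\Sp^1$-generator) and free for $i>k$ (an $\R$-generator); completeness of $X_{g_i}$ is inherited from the $X_{f_j}$ since its flow factors through $\Phi$. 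As $X_{g_i}|_{M_\Lambda}=\partial_m G_i(\Lambda)\,X_{f_m}$, this reduces to the identity $\sum_j\Lambda_j\,\partial_m e_i^j(\Lambda)=0$.

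The main obstacle is precisely this last identity: because the coefficients $M_i^j$ are functions of the $f$'s rather than constants, $X_{g_i}$ a priori carries extra radial-derivative terms, and the construction works only if they cancel. I would derive the cancellation from two ingredients: the degree-$0$ homogeneity of the $e_i^m$, which by Euler's relation gives $\sum_j\Lambda_j\,\partial_j e_i^m=0$, and the \emph{closedness} of the period form, $\partial_m e_i^j=\partial_j e_i^m$; combining them yields $\sum_j\Lambda_j\,\partial_m e_i^j=\sum_j\Lambda_j\,\partial_j e_i^m=0$ as required. The closedness, which is the genuinely nontrivial input, I would obtain from the classical period--action relation $e_i^j=\partial I_i/\partial f_j$ for the action integrals $I_i=\oint_{\gamma_i}\theta$ over the basic cycles $\gamma_i$ dual to $e_i$ (so that $\partial_m e_i^j=\partial^2 I_i/\partial f_m\,\partial f_j$ is symmetric). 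Note moreover that $\liedv{\Delta}\theta=\theta$ forces each $I_i$ to be homogeneous of degree $1$, whence $g_i=\sum_j f_j\,\partial_j I_i=I_i$ by Euler, reconciling the present construction with the action variables and confirming that the $g_i=M_i^j f_j$ have exactly the stated periods.
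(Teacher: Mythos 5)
Your proposal is correct and reaches the same destination (a lattice-adapted recombination $g_i=M_i^jf_j$ with degree-$0$, level-dependent coefficients), but it takes a genuinely different — and more explicit — route at the one step that actually requires work. The paper passes from ``$\theta$ is invariant under the flows of the $X_{f_j}$'' directly to ``the $Y_i=M_i^jX_{f_j}$ are infinitesimal homogeneous symplectomorphisms''; since
\begin{equation}
\liedv{Y_i}\theta \;=\; M_i^j\,\liedv{X_{f_j}}\theta+\theta(X_{f_j})\,\dd M_i^j\;=\;\sum_j f_j\,\dd M_i^j\,,
\end{equation}
this conclusion is exactly equivalent to the cancellation $\sum_j f_j\,\dd M_i^j=0$ that you isolate as the ``main obstacle,'' and which is not automatic when the coefficients vary from leaf to leaf. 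You supply the two missing ingredients explicitly: degree-$0$ homogeneity of the period lattice (derived cleanly from the Liouville flow conjugating the $\RR^n$-actions on $M_\Lambda$ and $M_{e^\tau\Lambda}$, rather than read off a posteriori as in the paper) and closedness of the period one-form $\sum_j e_i^j\,\dd f_j$ via the classical relation $e_i^j=\partial I_i/\partial f_j$ with $I_i=\oint_{\gamma_i}\theta$; Euler's identity then kills the extra radial terms. What your approach buys is a proof that the recombined fields are Hamiltonian for $g_i=M_i^jf_j$ rather than an assertion of it, at the price of importing the period--action relation (valid here because the fibers of $F$ are Lagrangian), which you cite rather than prove. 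Two small points to tidy up: the relation $e_i^j=\partial I_i/\partial f_j$ only makes sense for the compact cycles $i\le k$, so for $i>k$ you should simply complete the lattice basis by locally constant vectors (for which $\dd M_i^j=0$ and the identity is trivial); and one should note, as both you and the paper implicitly assume, that the rank $k$ of the isotropy lattice is locally constant so that a smooth basis exists on the whole neighbourhood.
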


\begin{proof}
    {
    Note that the Hamiltonian vector fields $X_{f_1}, \ldots, X_{f_n}$ are tangent to each level set $M_\lambda=F^{-1}(\lambda)$ of $F=(f_1, \ldots, f_n)$, with $\lambda\in \RR^{n}$. Indeed, 
    $$X_{f_i} (f_j) = \{f_i, f_j\} = 0\, , \quad \forall\, i,j\in \{1, \ldots, n\}\, ,$$
    in other words, $X_{f_i}$ is tangent to the level sets of $f_j$.
    Consequently, the restriction of the vector fields $X_{f_1}, \ldots, X_{f_n}$ to $M_\lambda$ generate the Abelian Lie algebra
    \begin{equation}
        \mathfrak{g} = \operatorname{Lie} (\TT^k \times \RR^{n-1}) \cong
        \operatorname{Lie} (\Sp^1) \oplus \cdots\oplus  \operatorname{Lie} (\Sp^1) \oplus \operatorname{Lie} (\RR) \oplus \cdots \oplus \operatorname{Lie} (\RR) 
        \, , 
    \end{equation}
    where there are $k$ copies of $\operatorname{Lie} (\Sp^1)$ and $n-k$ copies of $\operatorname{Lie} (\RR)$. Hence, one can choose new generators given by linear combinations of $\restr{X_{f_i}}{M_\lambda}$ which are adapted to the direct sum. Furthermore, since $X_{f_i}$ are all homogeneous of degree $0$, we can make linear combinations that are also homogeneous of degree $0$.
    }
    More specifically, there exist $Y_1, \ldots, Y_n \in \X(M_\lambda)$ with $Y_i = M^j_i \restr{X_{f_j}}{M_\lambda}$, for some real parameters $M^j_i$, such that
    \begin{itemize}
        \item for $1\leq i \leq k$ the vector field $Y_i$ is the generator of the $i$-th copy of $\operatorname{Lie} (\Sp^1)$,
        \item for $k+1\leq i \leq n$ the vector field $Y_i$ generates the $(i-k)$-th copy of $\operatorname{Lie} (\RR)$,
        \item $[Y_i, \Delta]=0$ for all $i\in \{1, \ldots, n\}$. 
    \end{itemize}
    Without loss of generality, the generators of $\operatorname{Lie} (\Sp^1)$ can be assumed to have flows with period $1$. (If that is not the case it suffices to rescale $Y_1, \ldots, Y_k$.) Since the level sets $M_{\lambda}$ foliate $M$, these vector fields can be extended to the whole manifold, namely,
    \begin{equation}
        Y_i = M^j_i (f_1, \ldots, f_n) X_{f_j}\in \X(M)
    \end{equation}
    where $M^j_i$ are functions which depend only on $f_1, \ldots, f_n$. 

    Since the functions $f_i$ are homogeneous of degree $1$ their Hamiltonian vector fields are infinitesimal homogeneous symplectomorphisms. In other words, $\theta$ is $\mathfrak{g}$-invariant. Hence, $Y_1, \ldots, Y_n$ are infinitesimal homogeneous symplectomorphisms. By Proposition~\ref{proposition:homogeneous_Hamiltonian}, each vector field $Y_i=X_{g_i}$ is the Hamiltonian vector field of
    \begin{equation}
        g_i = \theta (Y_i) = M^j_i f_j\, ,
    \end{equation}
    which is a homogeneous function of degree $1$. Therefore, $M^j_i$ are homogeneous functions of degree $0$. The fact that 
    \begin{equation}
        X_{\{h_i, h_j\}_\theta} = - [X_{h_i}, X_{h_j}]\, ,
    \end{equation}
    for all $h_i, h_j\in \Cinfty(M)$, implies that the functions $g_1, \ldots, g_n$ are in involution. Finally, taking into account that the functions $M^j_i$ are fixed on each $M_\lambda$ and $Y_i$ are tangent to $M_{\lambda}$, the completeness of $Y_i$ follows from the completeness of $X_{f_i}$.  
\end{proof}

\begin{lemma}\label{lemma:global_section}
    Let $\pi \colon P\to M$ be a $G$-principal bundle over a connected and simply connected manifold. Suppose there exists a connection one-form $A$ such that the horizontal distribution $\mathrm{H}$ is integrable. Then $\pi \colon P\to M$ is a trivial bundle and there exists a global section $\chi \colon M\to P$ such that $\chi^\ast A = 0$.
\end{lemma}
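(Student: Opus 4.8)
The plan is to recognise the hypothesis as flatness of the connection and then to run the classical holonomy argument: a flat connection over a simply connected base has trivial holonomy, which simultaneously produces a global horizontal section and a trivialisation. Throughout I fix notation by writing $\pi\colon P\to N$ for the bundle, $N$ the connected and simply connected base and $P$ the total space, and I seek $\chi\colon N\to P$ with $\chi^\ast A=0$; here $A\in\Omega^1(P;\g)$ is the connection one-form and $\mathrm{H}=\ker A$ its horizontal distribution, so that $\T\pi$ restricts to a fibrewise linear isomorphism $\mathrm{H}_p\to\T_{\pi(p)}N$ for every $p\in P$ and $\T P=\mathrm{H}\oplus\ker\T\pi$.

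First I would set up parallel transport. For every smooth curve $\gamma\colon[0,1]\to N$ and every $p\in\pi^{-1}(\gamma(0))$ there is a unique horizontal lift $\tilde\gamma$ with $\tilde\gamma(0)=p$ and $\pi\circ\tilde\gamma=\gamma$. Global existence on all of $[0,1]$ is the standard completeness property of horizontal lifts on a principal bundle: one covers the compact image $\gamma([0,1])$ by finitely many trivialising charts and, on each, solves the $G$-valued horizontal-lift ODE, which is complete because on a Lie group it is the flow of a time-dependent right-invariant vector field; $G$-equivariance lets one glue the local pieces. This yields parallel transport $\mathcal{P}_\gamma\colon\pi^{-1}(\gamma(0))\to\pi^{-1}(\gamma(1))$, a $G$-equivariant diffeomorphism of fibres depending smoothly on $\gamma$.

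Next I would exploit integrability. Since $\mathrm{H}$ is integrable, the Frobenius theorem foliates $P$ by integral manifolds of $\mathrm{H}$; equivalently, the curvature $\dd A+\tfrac12[A,A]$ vanishes, as one sees by evaluating the structure equation on horizontal vectors. Standard curvature--holonomy theory then shows that $\mathcal{P}_\gamma$ is unchanged under homotopies of $\gamma$ rel endpoints. Because $N$ is simply connected, any two paths with the same endpoints are homotopic rel endpoints, so $\mathcal{P}_\gamma$ depends only on $\gamma(0)$ and $\gamma(1)$; in particular every holonomy around a loop is trivial. Fixing $x_0\in N$ and $p_0\in\pi^{-1}(x_0)$, I then define $\chi(x)=\mathcal{P}_{\gamma}(p_0)$ for an arbitrary path $\gamma$ from $x_0$ to $x$; this is well defined by path-independence and smooth by smooth dependence on initial data, and clearly $\pi\circ\chi=\mathrm{id}_N$. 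By construction the image of $\chi$ is the leaf of $\mathrm{H}$ through $p_0$, so $\T\chi(\T_xN)\subseteq\mathrm{H}_{\chi(x)}=\ker A$, whence $\chi^\ast A=0$. Finally, the map $N\times G\to P$, $(x,g)\mapsto\chi(x)\cdot g$, is a $G$-equivariant smooth bijection with smooth inverse, exhibiting $\pi$ as a trivial bundle.

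The two places requiring genuine care are (i) the global existence (completeness) of horizontal lifts, and (ii) the passage from integrability of $\mathrm{H}$ to homotopy invariance of parallel transport; both are classical facts for principal connections (Kobayashi--Nomizu), so in the write-up I would cite them rather than reprove them. The only hypothesis-specific point to check carefully is that simple connectivity of $N$ is exactly what collapses the holonomy, so that $\chi$ is single-valued — this is the main obstacle in the sense that it is where the topological assumption on $N$ is genuinely used.
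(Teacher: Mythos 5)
Your argument is correct and is precisely the classical proof the paper is pointing to: the paper gives no proof of its own but simply refers to Chapter II of Kobayashi--Nomizu, where the argument is exactly the one you outline (integrability $\Leftrightarrow$ flatness, homotopy invariance of parallel transport, trivial holonomy over a simply connected base, hence a global horizontal section and a trivialisation). No gaps; your two flagged points of care are the right ones and are handled appropriately by citation.
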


Refer to Chapter II in \cite{K.N1996} for the proof.

\begin{proof}[Proof of Theorem~\ref{theorem:LA_exact_symp}]
    By Lemma~\ref{lemma:Arnold_tangent_fields}, $M_\lambda$ is diffeomorphic to $\TT^k \times \RR^{n-k}$. Without loss of generality, assume that $X_{f_1}, \ldots, X_{f_k}$ are infinitesimal generators of $\Sp^1$-actions whose flows have period 1, and that $X_{g_{k+1}}, \ldots, X_{g_n}$ are infinitesimal generators of $\RR$-actions (see Lemma~\ref{lemma:linear_combination_functions}). 
    
    Let $\Ldist = \ker \theta$ and $\overline{U} = \left\{x\in U \mid f_i (x)\neq 0 \ \forall\, i \text{ and } \theta (x) \neq 0\right\}$. By hypothesis, $F\colon U \to V$ is a trivial bundle. Hence, $U\cong V \times \TT^k \times \RR^{n-k}$ can be endowed with a Riemannian metric $g$ given by the product of a flat metric on $V\subseteq \RR^n$ and flat and invariant metrics on $\TT^k$ and $\RR^{n-k}$. The metric $g$ is thus flat and invariant by the Lie group action of $\TT^k \times \RR^{n-k}$. Consider the distribution
    \begin{equation}
        \Ldist^\theta = \big(\Ldist \cap \gen{X_{f_i}}_{i=1}^n\big)^{\perp_g} \cap \Ldist\, ,
    \end{equation}
    where $\perp_g$ denotes the orthogonal complement with respect to the metric $g$. This distribution is:
    \begin{enumerate}
        \item invariant by the Lie group action of $\TT^k \times \RR^{n-k}$,
        \item contained in $\Ldist$,
        \item complementary to the vertical bundle.
    \end{enumerate} 
    Indeed,
    \begin{equation}
        \ker \T F = \bigcap_{i=1}^n \ker \dd f_i = \gen{X_{f_i}}_{i=1}^n\, ,
    \end{equation}
    and 
    \begin{equation}
        \Ldist^\theta_x \oplus \gen{X_{f_i}(x)}_{i=1}^n = \T_x M\, ,
    \end{equation}
    for every $x\in \overline{U}$. Moreover, $F\colon \overline{U} \to \overline{U}/(\TT^k \times \RR^{n-k})$ is a principal bundle and $\Ldist^\theta$ is a principal connection with connection one-form $\theta$. In addition, the fact that $\theta\wedge \dd \theta = 0$ implies that $\Ldist$ is integrable. The fact that the orthogonal complement of an integrable distribution with respect to a flat metric is integrable implies that $\Ldist^\theta$ is integrable. Let $\hat{U}\subseteq \overline{U}$ be an open subset of $\overline{U}$ such that $\hat{V} = F(\hat{U})$ is simply connected. By Lemma~\ref{lemma:global_section}, there exists a global section $\chi$ of $F\colon \hat{U} \to \hat{V} \cong \hat{U}/(\TT^k \times \RR^{n-k})$ such that $\chi^\ast \theta = 0$.

    Let $\Phi\colon \TT^{k} \times \RR^{n-k} \times M \to M$ denote the Abelian Lie group action defined by the flows of $X_{f_i}$. For each point $x\in M_{\lambda} = F^{-1}(\lambda)$, the angle coordinates $(y^i(x))$ are determined by
    \begin{equation}
        \Phi\big(y^i(x), \chi(F(x))\big) = x\, .
    \end{equation} 
    Notice that $(y^i, f_i)$ are coordinates in $\hat{U}$ adapted to the foliation of $M$ in $M_\lambda$. 
    In these coordinates,
    \begin{equation}
        \theta = A_i \dd y^i + B^i \dd f_j\, , \quad X_{f_i} = \frac{\partial}{\partial y^i}\, ,
    \end{equation}
    for some functions $A_i$ and $B^i$. Contracting $\theta$ with $X_{f_i}$ yields $A_i = f_i$. 

    Finally, notice that the image of the section $\chi$ is given by the intersection of level sets of angle coordinates $y^i$, namely, $\Image \chi = \cap_{i=1}^n (y^i)^{-1}(\mu_i)$. Hence, 
    \begin{equation}
        0 = \chi^\ast \theta = B^i \dd f_i\, .
    \end{equation}
    Since $\mu_i$'s are arbitrary, we have that $B^i=0$. We conclude that $\theta = f_i \dd y^i$. 
\end{proof}

\section{Liouville--Arnold theorem for contact Hamiltonian systems}\label{sec:main_theorem}


Given the functions $f_\alpha\colon M\to \R,\ \alpha=0,\ldots, n$, define the map
\begin{equation}
    F =\left(f_0, \ldots, f_n\right)  \colon M \to \R^{n+1}\, .
\end{equation}
For each $\lambda\in \R^{n+1}\setminus\{0\}$, let $\ray{\lambda}$ denote the ray generated by $\lambda$, namely, 
$$ \ray{\lambda} = \{x \in \R^{n+1}\mid \exists\, r\in\R_+ \colon x=r \lambda\}\, .$$ 
Consider the subsets $M_\lambda = F ^{-1}(\lambda)$
and $ M_{\ray{\lambda}} = F^{-1}(\ray{\lambda})$ of $M$.


\begin{definition}\label{def:integrable_systems}
    A \emph{completely integrable contact system} is a triple $(M, \eta, F)$, where $(M, \eta)$ is a {(co-oriented)} contact manifold and $F=(f_0, \ldots, f_n)\colon M\to \RR^{n+1}$ is a map such that the functions $f_0, \ldots, f_n$ are in involution and $\T F$ has rank at least $n$ on $M$. The functions $f_0, \ldots, f_n$ are called \emph{integrals}.
\end{definition}

If there are singularities, such as critical points of the integrals, on a zero-measure subset $S$ of $M$, it suffices to restrict to the dense open subset $M_{0} = M\setminus S$.

See Remark~\ref{rem:characterization_integrable_systems} in Section~\ref{sec:coisotropic} for alternative characterizations of completely integrable contact systems.


\begin{theorem}[Liouville--Arnold theorem for contact Hamiltonian systems]\label{thm:main_theorem}
    Let $(M, \eta, F)$ be a completely integrable contact system, where $F=(f_0, \ldots, f_n)$.
    Assume that the Hamiltonian vector fields $X_{f_0}, \ldots, X_{f_n}$ are complete. 
    Given $\lambda \in \R^{n+1}\setminus\{0\}$,
    let $B \subseteq \RR^{n+1}\setminus\{0\}$ be an open neighbourhood of $\lambda$. 
    Given $\lambda\in \RR^{n+1}$, assume that $M_{\ray{\lambda}} = F^{-1}(\ray{\lambda})$ is connected (if not, replace $M_{\ray{\lambda}}$ by a connected component).
    Let $\pi\colon U\to M_{\ray{\lambda}}$ be a tubular neighbourhood of $M_{\ray{\lambda}}$ such that $\restr{F}{U}\colon U \to B$ is a trivial bundle over a domain $V \subseteq B$.
    Then:
    
    \begin{enumerate}
        \item The submanifold $M_{\ray{\lambda}}$ is coisotropic, invariant by the Hamiltonian flow of $f_\alpha$, and diffeomorphic to $\mathbb{T}^k\times \R^{n+1-k}$ for some $k\leq n$. In particular, if $M_\lambda$ is compact, then $k=n$. 
        \item There exist coordinates $(y^0, \ldots, y^n, \tilde A_1, \ldots, \tilde A_n)$ on $U$ such that the Hamiltonian vector fields of the functions $f_\alpha$ read
        \begin{equation}
            X_{f_\alpha} = \overline{N}_\alpha^\beta X_{f_\beta}\, ,
        \end{equation}
        where $\overline{N}_\alpha^\beta$ are functions depending only on $\tilde{A}_1, \ldots, \tilde{A}_n$.
        Hence, the integral curves of $X_{f_\beta}$ are given by
        \begin{equation}\label{eq:Hamilton_action_angle_symp}
            \begin{array}{ll}
                \dot y^\alpha = \Omega^\alpha(\tilde A_1, \ldots, \tilde A_n)\, ,\quad & \alpha=0, \ldots, n\, ,\\
                \dot{\tilde{A}}_i = 0\, ,  &i=1, \ldots, n\, ,
            \end{array}
        \end{equation}
        where $\Omega^\alpha\coloneqq \overline{N}^\alpha_\beta$.
        \item There exists a nowhere-vanishing function $A_0\in \Cinfty(U)$ and a conformally equivalent contact form $\tilde \eta = \eta/A_0$ such that $(y^i, \tilde A_i, y^0)$ are Darboux coordinates for $(M, \tilde \eta)$, namely, $\tilde{\eta} = \dd y^0 - \tilde{A}_i \dd y^i$.
    \end{enumerate}
\end{theorem}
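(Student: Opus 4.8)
The plan is to lift the completely integrable contact system to its symplectization, recognise the lift as a \emph{homogeneous} integrable system, apply Theorem~\ref{theorem:LA_exact_symp} (with $n$ replaced by $n+1$), and finally descend the resulting homogeneous action--angle coordinates back to $M$. First I would form the symplectization $\tilde M = M\times\RR_+$ with symplectic potential $\theta = s\,\eta$, where $s$ is the coordinate on $\RR_+$; then $\omega=-\dd\theta$ is symplectic, the Liouville field is $\Delta = s\,\partial/\partial s$, and $(\tilde M,\theta)$ has no singular points since $s>0$. To each integral $f_\alpha$ I associate its degree-one lift $\hat f_\alpha = s\,(f_\alpha\circ\pi_M)$, where $\pi_M\colon\tilde M\to M$ is the projection, so that $\liedv{\Delta}\hat f_\alpha=\hat f_\alpha$. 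A direct computation using Proposition~\ref{prop:alternatve_brackets_contact} identifies the Poisson bracket $\{\hat f_\alpha,\hat f_\beta\}_\theta$ with the degree-one lift of the Jacobi bracket $\jacBr{f_\alpha,f_\beta}$; hence involution of the $f_\alpha$ is equivalent to involution of the $\hat f_\alpha$.

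It remains to check independence. Since $\dd\hat f_\alpha = f_\alpha\,\dd s + s\,\dd f_\alpha$, a relation $\sum_\alpha c_\alpha\dd\hat f_\alpha=0$ at a point of $\hat M_\Lambda$ splits into $\sum_\alpha c_\alpha f_\alpha=0$ together with $\sum_\alpha c_\alpha\dd f_\alpha=0$; the extra $\dd s$ direction is exactly what promotes the rank of $\T F$ to full rank $n+1$ for $\T\hat F$ along the leaf, equivalently $M_{\ray{\Lambda}}$ is $(n+1)$-dimensional. This rank bookkeeping between $M$ and $\tilde M$ is a delicate point I would treat carefully, since $F(x)=r\Lambda\neq0$ on $M_{\ray{\Lambda}}$ interacts nontrivially with the involution relations. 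Granting it, $(\tilde M,\theta,\hat F)$ with $\hat F=(\hat f_0,\dots,\hat f_n)$ is a $(2n+2)$-dimensional homogeneous integrable system in the sense of Definition~\ref{def:integrable_systems_homogeneous}.

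Next I would apply Theorem~\ref{theorem:LA_exact_symp} to $(\tilde M,\theta,\hat F)$ about $\hat M_\Lambda=\hat F^{-1}(\Lambda)$. The key geometric observation is that $\hat M_\Lambda$ projects diffeomorphically onto $M_{\ray{\Lambda}}$: the equations $s\,f_\alpha(x)=\Lambda_\alpha$ determine $s$ uniquely for each $x$ with $F(x)\in\ray{\Lambda}$, so level sets upstairs correspond to ray-preimages downstairs. Completeness of the $X_{\hat f_\alpha}$ follows from that of the $X_{f_\alpha}$, as $X_{\hat f_\alpha}$ is $\pi_M$-related to the contact field $X_{f_\alpha}$ and its $s$-component is linear in $s$; the trivial-bundle hypothesis is inherited from the one on $\restr{F}{U}$. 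Theorem~\ref{theorem:LA_exact_symp} then yields $\hat M_\Lambda\cong\TT^k\times\RR^{n+1-k}$, hence $M_{\ray{\Lambda}}\cong\TT^k\times\RR^{n+1-k}$, together with homogeneous action--angle coordinates $(y^\alpha,A_\alpha)_{\alpha=0}^{n}$ on a neighbourhood of $\hat M_\Lambda$ with $\theta=A_\alpha\,\dd y^\alpha$, the $y^\alpha$ of degree $0$, the $A_\alpha$ of degree $1$, and $X_{\hat f_\alpha}=N_\alpha^\beta\,\partial/\partial y^\beta$. Statement (i) then follows: $M_{\ray{\Lambda}}$ is coisotropic and $X_{f_\alpha}$-invariant because $\hat M_\Lambda$ is Lagrangian and invariant upstairs, and in the compact case $k=n$, the non-compact $\RR$-factor being the radial scaling along the ray while each level $M_{r\Lambda}$ is a compact torus.

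Finally I would descend to $(M,\eta)$, identified with the slice $\{s=1\}\subset\tilde M$, on which $\theta$ restricts to $\eta$. Because the $y^\alpha$ are homogeneous of degree $0$ they restrict to functions on $M$, and restricting $\theta=A_\alpha\dd y^\alpha$ gives $\eta=A_0\,\dd y^0+A_i\,\dd y^i$. Setting $A_0:=\restr{A_0}{s=1}$ --- nowhere vanishing because $\theta$ is non-singular and $\partial/\partial y^0$ supplies the Reeb direction --- and $\tilde A_i:=-\restr{A_i}{s=1}/A_0$ (a degree-zero ratio, hence a genuine function on $M$), I obtain $\tilde\eta:=\eta/A_0=\dd y^0-\tilde A_i\,\dd y^i$, which is statement (iii); projecting $X_{\hat f_\alpha}=N_\alpha^\beta\,\partial/\partial y^\beta$ through $\pi_M$ and re-expressing in the descended chart yields the linearised equations~\eqref{eq:Hamilton_action_angle_symp}, i.e.\ statement (ii). The main obstacle I anticipate is precisely this last descent: one must arrange the homogeneous action--angle coordinates so that the angles are $\Delta$-invariant and a single action $A_0$ captures the radial $\Delta$-direction, so that the remaining data push down to $M$ and assemble into a contact Darboux chart for the conformally rescaled form $\eta/A_0$. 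It is exactly here that the \emph{homogeneous} refinement of Theorem~\ref{theorem:LA_exact_symp} --- preservation of $\theta$, not merely of $\omega$ --- is indispensable.
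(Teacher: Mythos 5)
Your proposal follows essentially the same route as the paper: symplectize to $M\times\RR_+$ with $\theta=s\,\eta$, lift the integrals to degree-one functions, apply Theorem~\ref{theorem:LA_exact_symp}, and push the homogeneous action--angle chart back down using that degree-$0$ functions and degree-$1$ functions on $M^\Sigma$ correspond to functions on $M$. The descent, the identification of $\hat M_\Lambda$ with $M_{\ray{\Lambda}}$, the relabelling needed so that some $A_\alpha$ is nonvanishing (you assert $A_0\neq0$ outright, which requires the relabelling the paper performs since $\Lambda\neq0$ only guarantees \emph{some} component is nonzero), and the sign convention on the lift (the paper takes $f_\alpha^\Sigma=-s\,\Sigma^\ast f_\alpha$ precisely so that $X_{f_\alpha^\Sigma}$ projects onto $X_{f_\alpha}$ rather than $-X_{f_\alpha}$) are all matters of bookkeeping that match the paper's argument.

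The one step whose justification does not work as written is the coisotropy of $M_{\ray{\Lambda}}$. You claim it follows ``because $\hat M_\Lambda$ is Lagrangian and invariant upstairs,'' but the image of a Lagrangian submanifold under $\Sigma$ is not coisotropic in general: the correspondence between the symplectization and the contact manifold pairs \emph{homogeneous} ($\Delta$-invariant) coisotropic submanifolds of $(M^\Sigma,\omega)$ with coisotropic submanifolds of $(M,\eta)$, and $\hat M_\Lambda=(F^\Sigma)^{-1}(\Lambda)$ is not $\Delta$-invariant (the $\Delta$-flow moves it through the level sets $(F^\Sigma)^{-1}(e^t\Lambda)$). What you need is the cone $(F^\Sigma)^{-1}(\ray{\Lambda})=\RR_+\cdot\hat M_\Lambda$, which is an $(n+2)$-dimensional homogeneous submanifold foliated by the Lagrangians $(F^\Sigma)^{-1}(r\Lambda)$, hence coisotropic, and which descends to $M_{\ray{\Lambda}}$. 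The paper avoids this detour entirely and proves coisotropy directly on $M$: it shows $\T M_{\ray{\Lambda}}^{\circ}$ is spanned by the one-forms $\Omega_{\alpha\beta}=f_\alpha\,\dd f_\beta-f_\beta\,\dd f_\alpha$ (using $\contr{X_{f_\gamma}}\Omega_{\alpha\beta}=f_\alpha\{f_\gamma,f_\beta\}-f_\beta\{f_\gamma,f_\alpha\}=0$) and then verifies $\Omega_{\alpha\beta}\bigl(\lsharp\Omega_{\gamma\delta}\bigr)=0$ from the involution relations, so that $\T M_{\ray{\Lambda}}^{\perp_\Lambda}\subseteq\T M_{\ray{\Lambda}}$. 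Either repair is short, but as stated your deduction is a non sequitur and should be replaced by one of these arguments.
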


The coordinates $(y^\alpha)$ and $(\tilde{A}_i)$, for $\alpha=0,\dotsc, n$ and $i=1,\dotsc, n$, are called \emph{angle coordinates} and \emph{action coordinates}, respectively. In physical applications, one of the integrals is regarded as the Hamiltonian function (which represents the energy of the system), and equations \eqref{eq:Hamilton_action_angle_symp} are the equations of motion.

There are functions $A_0, \ldots, A_n\in \Cinfty(U)$ such that the action coordinates are given by $\tilde{A_i}=-A_i/A_0\, (i=1, \ldots, n)$. These functions are the projection of the action coordinates from the symplectization of the completely integrable contact system (see Subsection~\ref{subsec:proof} for more details). In terms of these functions and the angle coordinates $(y^i)$, the original contact form reads $\eta = A_i \dd y^i$.


\begin{remark}
    In order to compute the action-angle coordinates for $(M, \eta, F)$, one has to carry out the following steps:
    \begin{enumerate}
        \item Construct the symplectization $(M^\Sigma, \theta, F^\Sigma)$, where $M^\Sigma = M\times \RR_+, \, \theta = r \Sigma^\ast \eta$ and $F^\Sigma = -r \Sigma^\ast F$, with $\Sigma \colon M^\Sigma \to M$ the canonical projection of $\RR_+$.
        \item Compute the action-angle coordinates $(y^\alpha_\Sigma, A_\alpha^\Sigma)\, , \alpha \in \{0, \ldots, n\}$ for $(M^\Sigma, \theta, F^\Sigma)$ (see Remark~\ref{remark:construction_coords_exact}).
        \item The angle coordinates for $(M, \eta, F)$ are functions $y^\alpha$ such that $y^\alpha_\Sigma = \Sigma^\ast y^\alpha$. Similarly, one has the functions $A_\alpha$ such that $A_\alpha^\Sigma = -r \Sigma^\ast A_\alpha$. 
        \item At least one of the functions $(A_\alpha)$ is non-vanishing. By relabeling the indices if necessary, assume that $A_0\neq 0$. The action coordinates are given by $\tilde{A}_i = - A_i/A_0, \, i\in \{1, \ldots, n\}$. 
    \end{enumerate}
\end{remark}


The proof of Theorem~\ref{thm:main_theorem} will consist of the following steps:
\begin{enumerate}
    \item Symplectize the completely integrable contact system $(M, \eta, F)$, obtaining the homogeneous integrable system $(M^\Sigma, \theta, F^\Sigma)$.
    \item Ensure that sufficient conditions hold for $F$ so that the Liouville--Arnold theorem can be applied to $(M^\Sigma, \theta, F^\Sigma)$.
    \item Apply the Liouville--Arnold theorem for exact symplectic manifolds (Theorem~\ref{theorem:LA_exact_symp}).
    \item Construct action-angle coordinates for $(M, \eta, F)$ from the ones for $(M^\Sigma, \theta, F^\Sigma)$.
\end{enumerate}

It is worth remarking that, in order to obtain coordinates on $M$ from the ones on $M^\Sigma$, we need the coordinates on $M^\Sigma$ to be homogeneous functions of degree 1. This is not guaranteed if one applies the Liouville--Arnold theorem for non-compact invariant submanifolds by Fiorani, Giachetta and Sardanashvily \cite{Fiorani2003,Fiorani2003a}. We need to apply Theorem~\ref{theorem:LA_exact_symp} instead.

\subsection{Symplectization of contact manifolds}\label{sec:symplectization}

In this subsection, we present the results concerning the symplectization of contact manifolds that will be employed subsequently. For more details, see \cite{Lainz2022,Grabowska2022,G.G2024a,Libermann1987}.

\begin{definition}
    Let $\cdist$ be a contact distribution on a manifold $M$. Consider an exact symplectic manifold $(M^\Sigma, \theta)$ and a (locally trivial) fiber bundle $\Sigma\colon M^\Sigma \to M$. This fiber bundle is called a \emph{symplectization} if
    \begin{equation}\label{eq:def_symplectization}
        {\T \Sigma} \left(\Ldist\right) = \cdist\, ,
    \end{equation}
    outside the singular points, where $\mathfrak L = \ker \theta$ is called the \emph{Liouville distribution}.
\end{definition}


\begin{proposition}
    Let $(M, \eta)$ and $(M^\Sigma, \theta)$ be a contact manifold and an exact symplectic manifold, respectively.
    A (locally trivial) fiber bundle $\Sigma\colon M^\Sigma \to M$ is a symplectization if and only if there exists a nowhere-vanishing function $\sigma\colon M^\Sigma \to \R$ such that
    \begin{equation}
        \sigma \cdot \left(\Sigma^\ast \eta \right)= \theta\, .
    \end{equation}
    The function $\sigma$ is called the \emph{conformal factor} of $\Sigma$.
\end{proposition}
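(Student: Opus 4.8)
The plan is to reduce the claim to the statement that the one-forms $\theta$ and $\Sigma^\ast \eta$ on $M^\Sigma$ have the same kernel, and then invoke the elementary fact that two nowhere-vanishing one-forms with identical (codimension-one) kernels differ by a nowhere-vanishing smooth factor. Throughout I work outside the singular points, where $\theta$ (hence $\Ldist = \ker\theta$) is nonzero, and I use that $\Sigma$ is a submersion, so that $\T_p\Sigma$ is surjective and $\Sigma^\ast\eta$ is nowhere-vanishing (since $\eta$ is a contact form, hence nowhere-zero, and $\eta\circ\T_p\Sigma$ cannot vanish when $\T_p\Sigma$ is onto). The central observation is the pointwise identity
\begin{equation}
    \ker(\Sigma^\ast\eta)_p = (\T_p\Sigma)^{-1}(\cdist_{\Sigma(p)})\, ,
\end{equation}
which follows directly from $(\Sigma^\ast\eta)(v) = \eta(\T_p\Sigma\, v)$ together with $\cdist = \ker\eta$.

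For the \enquote{if} direction, suppose $\sigma\,\Sigma^\ast\eta = \theta$ with $\sigma$ nowhere-vanishing. Then $\Ldist = \ker\theta = \ker(\Sigma^\ast\eta)$, so by the identity above $\Ldist_p = (\T_p\Sigma)^{-1}(\cdist)$; pushing forward and using surjectivity of $\T_p\Sigma$ gives $\Sigma_\ast\Ldist = \T_p\Sigma\big((\T_p\Sigma)^{-1}(\cdist)\big) = \cdist$, which is precisely \eqref{eq:def_symplectization}.

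For the \enquote{only if} direction, assume $\Sigma_\ast\Ldist = \cdist$, read fiberwise as $\T_p\Sigma(\Ldist_p) = \cdist_{\Sigma(p)}$ for every $p$. This gives the inclusion $\Ldist_p \subseteq (\T_p\Sigma)^{-1}(\cdist) = \ker(\Sigma^\ast\eta)_p$, and a dimension count closes the gap: since $\T_p\Sigma$ is onto $\T_{\Sigma(p)}M$ and $\cdist$ is a hyperplane therein, $\ker(\Sigma^\ast\eta)_p$ is a hyperplane in $\T_p M^\Sigma$, of the same dimension as the hyperplane $\Ldist_p = \ker\theta_p$, forcing $\ker\theta = \ker(\Sigma^\ast\eta)$. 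Two nowhere-vanishing one-forms with the same kernel are pointwise proportional, so $\theta = \sigma\,\Sigma^\ast\eta$ for a unique nonzero scalar $\sigma(p)$ at each point; locally choosing a smooth vector field $X$ with $(\Sigma^\ast\eta)(X)\neq 0$ exhibits $\sigma = \theta(X)/(\Sigma^\ast\eta)(X)$ as a ratio of smooth functions with nonvanishing denominator, hence smooth, while uniqueness of the scalar guarantees global consistency.

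I expect the only real subtlety to be in the \enquote{only if} direction, where the inclusion $\Ldist_p\subseteq\ker(\Sigma^\ast\eta)_p$ must be promoted to equality by the dimension argument (both sides being kernels of nonvanishing one-forms, hence hyperplanes, so that equal dimensions already yield equality regardless of the fiber dimension of $\Sigma$), together with the verification that the resulting $\sigma$ is smooth and nowhere-vanishing on all of $M^\Sigma$; by contrast, the \enquote{if} direction is a one-line pushforward computation.
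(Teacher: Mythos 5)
Your proof is correct and follows essentially the same route as the paper's: both arguments reduce the equality $\Sigma_\ast\Ldist=\cdist$ to a corank-one/hyperplane consideration and then conclude that $\theta$ and $\Sigma^\ast\eta$ are proportional by a nowhere-vanishing factor, the paper phrasing this dually via the annihilators $\cdist^\circ=\langle\eta\rangle$ and $\Ldist^\circ=\langle\theta\rangle$ while you work directly with kernels. Your version is somewhat more explicit about the surjectivity of $\T\Sigma$, the dimension count, and the smoothness of $\sigma$, all of which the paper leaves implicit.
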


\begin{proof}
    Since $\cdist$ and ${\T \Sigma} \left(\Ldist\right)$ are both distributions of corank $1$ on $M$, in order to ensure that $\Sigma$ is a symplectization it is enough to verify that ${\T \Sigma} \left(\Ldist\right) \subseteq \cdist$.
    By taking duals, this condition is equivalent to 
    \begin{equation}
        \Sigma^\ast (\cdist^\circ) \subseteq \Ldist^\circ\, ,
    \end{equation}
    but $\cdist^\circ= \langle\eta\rangle$ and $\Ldist^\circ = \langle\theta\rangle$. Thus,
    \begin{equation}\label{eq_symp}
        \sigma \cdot \left(\Sigma^\ast \eta\right) = \theta\, , 
    \end{equation}
    for some nowhere-vanishing function $\sigma$. 
    

\end{proof}




\begin{proposition}
    Let $(M, \eta)$ be a {(co-oriented)} contact manifold, $(M^\Sigma, \theta)$ an exact symplectic manifold, and $\Sigma \colon M^\Sigma \to M$ a symplectization with conformal factor $\sigma$. Then, $\ker \T \Sigma = \langle \Delta\rangle$, where $\Delta$ denotes the Liouville vector field of $(M^\Sigma, \theta)$. Moreover, the conformal factor is a homogeneous function of degree $1$, namely, $\Delta(\sigma) = \sigma$. 
\end{proposition}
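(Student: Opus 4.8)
The plan is to squeeze everything out of the single defining relation $\theta = \sigma\,\Sigma^\ast\eta$ together with the definition $\contr{\Delta}\omega = -\theta$ of the Liouville vector field, by differentiating once and contracting with cleverly chosen vector fields. First I would record the trivial observation that
\begin{equation}
\contr{\Delta}\theta = -\contr{\Delta}\contr{\Delta}\omega = 0\, ,
\end{equation}
so that $\theta(\Delta)=0$; since $\theta = \sigma\,\Sigma^\ast\eta$ with $\sigma$ nowhere vanishing, this already gives $(\Sigma^\ast\eta)(\Delta)=0$, i.e. $\eta(\Sigma_\ast\Delta)=0$. Thus $\Sigma_\ast\Delta$ is a section of the contact distribution $\cdist=\ker\eta$, which will be used twice below.

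Next I would differentiate the defining relation, obtaining $\omega = -\dd\theta = -\dd\sigma\wedge\Sigma^\ast\eta - \sigma\,\Sigma^\ast\dd\eta$. Contracting with $\Delta$, using $\contr{\Delta}\omega=-\theta=-\sigma\,\Sigma^\ast\eta$ and the vanishing of $(\Sigma^\ast\eta)(\Delta)$ established above, yields the key identity
\begin{equation}
(\Delta(\sigma) - \sigma)\,\Sigma^\ast\eta = -\sigma\,\Sigma^\ast\!\left(\contr{\Sigma_\ast\Delta}\dd\eta\right)\, .
\end{equation}
Both sides are pullbacks by $\Sigma$, so evaluating at a point $p$ (with $x=\Sigma(p)$ and $w=\T\Sigma(\Delta_p)\in\cdist_x$) and using surjectivity of $\T\Sigma$, this reads, as an identity of one-forms on $\T_x M$,
\begin{equation}
(\Delta(\sigma) - \sigma)(p)\,\eta_x = -\sigma(p)\,\contr{w}\dd\eta_x\, .
\end{equation}

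The genuinely delicate point — and the step I expect to be the main obstacle — is to separate the scalar factor $\Delta(\sigma)-\sigma$, which lives upstairs on $M^\Sigma$ and need not descend to $M$, from the one-form data, which lives downstairs on $M$. I would resolve this by contracting the last identity with the Reeb vector field $\Reeb$ of $(M,\eta)$: since $\eta(\Reeb)=1$ while $\contr{\Reeb}\dd\eta=0$ forces $\dd\eta(w,\Reeb)=0$, the right-hand side dies and I am left with $(\Delta(\sigma)-\sigma)(p)=0$. As $p$ is arbitrary, this proves $\Delta(\sigma)=\sigma$, the homogeneity claim. Feeding this back, the identity collapses to $\contr{w}\dd\eta_x=0$; but $w\in\cdist_x$ and $\dd\eta$ is non-degenerate on the contact distribution (equivalently $\ker\dd\eta=\gen{\Reeb}$ with $\Reeb\notin\cdist$), so $w=\Sigma_\ast\Delta=0$. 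Hence $\Delta\in\ker\T\Sigma$, and since $\Delta$ is nowhere vanishing away from singular points, $\gen{\Delta}$ is a rank-one subbundle of the vertical bundle.

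Finally, rather than invoking the fibre dimension, I would upgrade this to the equality $\ker\T\Sigma=\gen{\Delta}$ directly from non-degeneracy of $\omega$. For any vertical $v\in\ker\T\Sigma$ one has $(\Sigma^\ast\eta)(v)=0$ and $\contr{v}\Sigma^\ast\dd\eta=0$, so the same expansion of $\omega$ gives $\contr{v}\omega = -v(\sigma)\,\Sigma^\ast\eta$. Combining this with $\contr{\Delta}\omega=-\sigma\,\Sigma^\ast\eta$ shows $\contr{\sigma v - v(\sigma)\Delta}\omega = 0$, whence $v=(v(\sigma)/\sigma)\,\Delta\in\gen{\Delta}$. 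Together with $\Delta\in\ker\T\Sigma$ this establishes $\ker\T\Sigma=\gen{\Delta}$, completing both assertions of the proposition.
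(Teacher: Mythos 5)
Your proof is correct and follows essentially the same route as the paper's: expand $\dd\theta=\dd(\sigma\,\Sigma^\ast\eta)$, contract with $\Delta$, then contract with (a lift of) the Reeb vector field to isolate $\Delta(\sigma)-\sigma=0$, and finally use the non-degeneracy of $\dd\eta$ on the contact distribution to conclude $\Sigma_\ast\Delta=0$. The only divergence is the last step: the paper obtains $\ker\T\Sigma=\gen{\Delta}$ from a rank count (the fibres of $\Sigma$ are one-dimensional, so the vertical bundle has rank one and is spanned by the nowhere-vanishing $\Delta$), whereas you show the inclusion $\ker\T\Sigma\subseteq\gen{\Delta}$ directly from $\contr{\sigma v-v(\sigma)\Delta}\,\omega=0$ and non-degeneracy of $\omega$; your variant is slightly more self-contained, since it does not presuppose the fibre dimension of the symplectization.
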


\begin{proof}
    Taking into account that the rank of $\Im \T \Sigma$ is $2n+1$, the rank of $\ker \T \Sigma$ must be $1$. Therefore, the proof is completed by showing that $\Delta$ belongs to the distribution $\ker \T \Sigma$.

    Observe that $\contr{\Delta} \theta = 0$, and thus $\contr{\Delta} (\Sigma^\ast \eta) = 0$. On the other hand, since $\theta = \contr{\Delta} \dd \theta$, we have that
    \begin{equation}\label{eq:eta_contr_Delta}
        \sigma \cdot \left( \Sigma^\ast \eta \right) = \contr{\Delta} \dd \big(\sigma \cdot \left(\Sigma^\ast \eta\right)\big) 
        = \Delta(\sigma) \left(\Sigma^\ast \eta\right) + \sigma \contr{\Delta} \left(\Sigma^\ast \dd \eta\right)
    \end{equation}
    Let $\Reeb$ be the Reeb vector field of $(M, \eta)$. Consider a vector field $\tilde{\Reeb}\in \X(M^\Sigma)$ such that ${\T \Sigma} \left(\tilde{\Reeb}\right) = \Reeb$. Then, 
    \begin{equation}
        \contr{\tilde{\Reeb}} (\Sigma^\ast \eta) = \Sigma^\ast (\contr{\Reeb} \eta) = 1  \, , \quad
       \contr{\tilde{\Reeb}} (\Sigma^\ast \dd \eta) = \Sigma^\ast (\contr{\Reeb} \dd \eta) = 0 \, . 
    \end{equation}
    Contracting both sides of equation \eqref{eq:eta_contr_Delta} with $\tilde{\Reeb}$ yields $\Delta(\sigma) = \sigma$, which in turn implies that 
    \begin{equation}
        0 = \contr{\Delta} (\Sigma^\ast \dd \eta) = \Sigma^\ast \left(\contr{{\T \Sigma}\left( \Delta\right)} \dd \eta\right)\, .
    \end{equation}
    Since $\Sigma$ is a submersion, $\contr{{\T \Sigma} \left(\Delta\right)} \dd \eta = 0$. Therefore, $\flat\Big({\T \Sigma} \left(\Delta\right)\Big) = 0$, and hence ${\T \Sigma} \left(\Delta\right) = 0$.
\end{proof}

\begin{theorem}[Symplectization of {conformal contactomorphisms}]\label{thm:symplectization_contactomorphism}

    Let $(M_1, \eta_1)$ and $(M_2, \eta_2)$ be {(co-oriented)} contact manifolds. 
    Consider {two} symplectizations $\Sigma_1 \colon M_1^\Sigma \to M_1$ and $\Sigma_2\colon M_2^\Sigma \to M_2$ with symplectic potentials $\theta_1 \in \Omega^1(M_1^\Sigma)$ and $\theta_2 \in \Omega^1(M_2^\Sigma)$, and conformal factors $\sigma_1\in \Cinfty(M_1^\Sigma)$ and $\sigma_2\in \Cinfty(M_2^\Sigma)$, namely, $\sigma_1 \Sigma_1^\ast \eta_1 = \theta_1$ and $\sigma_2 \Sigma_2^\ast \eta_2 = \theta_2$.
    
    If $F^\Sigma \colon M_1^\Sigma \to M_2^\Sigma$ is a homogeneous symplectomorphism, then it can be projected onto a map $F\colon M_1 \to M_2$ making the diagram
    \begin{equation}\label{eq:symplectization_morphisms_diagram}
        \begin{tikzcd}
            M_1^\Sigma \arrow[r, "F^\Sigma"] \arrow[d, "\Sigma_1"] & M_2^\Sigma \arrow[d, "\Sigma_2"] \\
            M_1 \arrow[r, "F"]                                   & M_2                           
            \end{tikzcd}
    \end{equation}
    commutative. The map $F\colon M_1 \to M_2$ is a {conformal contactomorphism}. 
    
    Conversely, if $F\colon M_1 \to M_2$ is a {conformal contactomorphism} with conformal factor $f\in \Cinfty (M_1)$ (i.e., $F^\ast \eta_2 = f \eta_1$), {then there is a unique homogeneous symplectomorphism $F^\Sigma \colon M_1^\Sigma \to M_2^\Sigma$ such that the diagram~\eqref{eq:symplectization_morphisms_diagram} commutes. This map can be characterized among those making the diagram~\eqref{eq:symplectization_morphisms_diagram} commutative by the following identity: 
    \begin{equation}
        {(F^\Sigma)}^\ast(\sigma_2) = \frac{\sigma_1}{\Sigma_1^\ast(f)}\, .
    \end{equation}
    The map $F^\Sigma$ is called the \emph{symplectization} of $F$.}
    

    Moreover, the symplectization is functorial. More specifically, if $(M_3, \eta_3)$ is a {(co-oriented)} contact manifold, $(M_3^\Sigma, \theta_3)$ an exact symplectic manifold, and $G\colon M_2 \to M_3$ a {conformal contactomorphism}, then the symplectization $G^\Sigma\colon M_2^\Sigma \to M_3^\Sigma$ of $G$ verifies ${(G \circ F)}^\Sigma = G^\Sigma \circ F^\Sigma$. Hence, the following diagram is commutative:
    \begin{equation}\label{eq:diagram_functioral_symplectization}
        \begin{tikzcd}
            M^\Sigma_1 \arrow[r, "F^\Sigma"'] \arrow[d, "\Sigma_1"] \arrow[rr, "(G \circ F)^\Sigma", bend left] & M^\Sigma_2 \arrow[d, "\Sigma_2"] \arrow[r, "G^\Sigma"'] & M^\Sigma_3 \arrow[d, "\Sigma_3"] \\
            M_1 \arrow[r, "F"] \arrow[rr, "G \circ F"', bend right]                                           & M_2 \arrow[r, "G"]                                    & M_3                 
            \end{tikzcd}
    \end{equation}
\end{theorem}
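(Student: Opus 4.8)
The plan is to prove the three assertions in turn---that a homogeneous symplectomorphism projects to a contactomorphism, the converse conformal-factor criterion, and functoriality---treating the first as the conceptual core and reducing the other two to manipulations of the defining identities $\sigma_i\,\Sigma_i^\ast\eta_i=\theta_i$.

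For the forward direction I would start from the two facts already in hand: that $\ker\T\Sigma_i=\langle\Delta_i\rangle$, so the (connected) fibres of $\Sigma_i$ are precisely the orbits of the Liouville vector field $\Delta_i$, and that a homogeneous symplectomorphism intertwines Liouville fields, $(F^\Sigma)_\ast\Delta_1=\Delta_2$. Conjugating flows, $F^\Sigma$ then carries $\Delta_1$-orbits to $\Delta_2$-orbits, hence fibres of $\Sigma_1$ to fibres of $\Sigma_2$; this lets me define $F\colon M_1\to M_2$ by $F(\Sigma_1(p))=\Sigma_2(F^\Sigma(p))$, which is well defined, smooth, and a diffeomorphism because $F^\Sigma$ is a fibre-preserving diffeomorphism and the $\Sigma_i$ are submersions, and which makes the square commute by construction. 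To recognise $F$ as a contactomorphism I would substitute $\theta_i=\sigma_i\Sigma_i^\ast\eta_i$ into $\theta_1=(F^\Sigma)^\ast\theta_2$ and use $\Sigma_2\circ F^\Sigma=F\circ\Sigma_1$, obtaining $\Sigma_1^\ast(F^\ast\eta_2)=g\,\Sigma_1^\ast\eta_1$ with $g=\sigma_1/(F^\Sigma)^\ast\sigma_2$.

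The one point that needs real care is showing this $g$ descends to $M_1$. Here homogeneity does the work: $\Delta_1(\sigma_1)=\sigma_1$, while $\Delta_1\big((F^\Sigma)^\ast\sigma_2\big)=(F^\Sigma)^\ast(\Delta_2\sigma_2)=(F^\Sigma)^\ast\sigma_2$ by $(F^\Sigma)_\ast\Delta_1=\Delta_2$, so both numerator and denominator are homogeneous of degree $1$ and $\Delta_1(g)=0$. Being constant along the fibres of $\Sigma_1$, $g$ equals $\Sigma_1^\ast\bar g$ for a nowhere-vanishing $\bar g$ on $M_1$; cancelling the submersion $\Sigma_1$ gives $F^\ast\eta_2=\bar g\,\eta_1$, so $F$ is a contactomorphism.

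For the converse I would take $F$ a contactomorphism with $F^\ast\eta_2=f\eta_1$ and $F^\Sigma$ any lift covering $F$, and compute directly $(F^\Sigma)^\ast\theta_2=(F^\Sigma)^\ast\sigma_2\cdot\Sigma_1^\ast(f)\cdot\Sigma_1^\ast\eta_1$ against $\theta_1=\sigma_1\Sigma_1^\ast\eta_1$; as $\Sigma_1^\ast\eta_1$ is nowhere-vanishing, equality of the two forms is equivalent to $(F^\Sigma)^\ast\sigma_2=\sigma_1/\Sigma_1^\ast(f)$, which is the stated criterion. This relation, together with covering $F$, determines $F^\Sigma$ uniquely (the value of $\sigma_2$ parametrises each $\Delta_2$-orbit, i.e.\ fibre), justifying the name \emph{symplectization of $F$}. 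Functoriality then follows at once: $G^\Sigma\circ F^\Sigma$ covers $G\circ F$ and is a homogeneous symplectomorphism as a composition of such, so by that uniqueness it equals $(G\circ F)^\Sigma$; equivalently one checks the criterion, $(G^\Sigma\circ F^\Sigma)^\ast\sigma_3=\sigma_1/\Sigma_1^\ast(f\cdot F^\ast g)$, which is exactly $\sigma_1/\Sigma_1^\ast$ of the conformal factor $f\cdot F^\ast g$ of $G\circ F$ coming from $(G\circ F)^\ast\eta_3=(f\,F^\ast g)\,\eta_1$.
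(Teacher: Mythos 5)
Your proposal is correct, and its overall skeleton matches the paper's: project $F^\Sigma$ along the Liouville directions using $(F^\Sigma)_\ast\Delta_1=\Delta_2$ and $\ker\T\Sigma_i=\langle\Delta_i\rangle$; establish the conformal-factor criterion by the pullback computation $(F^\Sigma)^\ast\theta_2=(F^\Sigma)^\ast(\sigma_2)\,\Sigma_1^\ast(f)\,\sigma_1^{-1}\,\theta_1$; and deduce functoriality from uniqueness of the lift combined with the fact that compositions of (homogeneous symplecto-/contacto-)morphisms are again such. The one step you argue genuinely differently is the conclusion that the projected map $F$ is a contactomorphism. The paper works at the level of distributions, computing $F_\ast\cdist_1=(\Sigma_2)_\ast(F^\Sigma)_\ast\Ldist_1=(\Sigma_2)_\ast\Ldist_2=\cdist_2$ directly from the defining property $\Sigma_\ast\Ldist=\cdist$ of a symplectization. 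You instead work with the forms: you extract $\Sigma_1^\ast(F^\ast\eta_2)=g\,\Sigma_1^\ast\eta_1$ with $g=\sigma_1/(F^\Sigma)^\ast\sigma_2$, observe that both $\sigma_1$ and $(F^\Sigma)^\ast\sigma_2$ are homogeneous of degree $1$ (the latter because $F^\Sigma$ intertwines the Liouville fields), so that $\Delta_1(g)=0$ and $g$ descends to a nowhere-vanishing $\bar g$ on $M_1$ with $F^\ast\eta_2=\bar g\,\eta_1$. Your route is slightly longer but buys the explicit conformal factor of $F$ (consistent with the converse criterion) and avoids invoking the distribution-level characterization; you also justify the uniqueness of the lift concretely, via the fact that $\sigma_2$ parametrises each $\Delta_2$-orbit, where the paper only asserts it. Both arguments are sound.
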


\begin{proof}

Let $\Delta_1$ and $\Delta_2$ denote the Liouville vector fields of $(M_1^\Sigma, \theta_1)$ and $(M_2^\Sigma, \theta_2)$, respectively. Let $\cdist_1 = \ker \eta_1$ and $\cdist_2 = \ker \eta_2$ denote the contact distributions, and $\Ldist_1 = \ker \theta_1$ and $\Ldist_2 = \ker \theta_2$ the Liouville distributions.

Suppose that $F^\Sigma\colon M_1^\Sigma\to M_2^\Sigma$ is a homogeneous symplectomorphism. Then, ${(F^\Sigma)}_\ast\Delta_1 = \Delta_2$, $\ker \T \Sigma_1 = \gen{\Delta_1}$ and $\ker \T \Sigma_2 = \gen{\Delta_2}$. This implies that the map $F^\Sigma$ projects onto a map $F$ such that the diagram \eqref{eq:symplectization_morphisms_diagram} commutes. Using the commutative diagram, one obtains
\begin{equation}
    F_\ast \cdist_1 = {\T \Sigma_2 \circ (F^\Sigma)_\ast \circ (\T \Sigma_1)^{-1}} (\cdist_1) = \T \Sigma_2 \circ (F^\Sigma)_\ast\, \Ldist_1 = 
    \T \Sigma_2(\Ldist_2) = \cdist_2\, .
\end{equation}
Thus, $F$ is a {conformal contactomorphism}.

Conversely, suppose that $F\colon M_1 \to M_2$ is a {conformal contactomorphism} with conformal factor $f$. {Let $F^\Sigma\colon M_1^\Sigma \to M_2^\Sigma$ be a map making the diagram~\eqref{eq:symplectization_morphisms_diagram} commutative}.
Then,
\begin{equation}
    \begin{aligned}
        {(F^\Sigma)}^\ast \theta_2 &= {(F^\Sigma)}^\ast \big(\sigma_2 \left(\Sigma_2^\ast \eta_2\right) \big)
        ={(F^\Sigma)}^\ast \left[\sigma_2 \Big(\big(F\circ \Sigma_1 \circ (F^\Sigma)^{-1}\big)^\ast \eta_2\Big)\right] 
        \\ & 
        = {(F^\Sigma)}^\ast(\sigma_2) \left(\Sigma_1^\ast F^\ast \eta_2\right) 
        =
        {(F^\Sigma)}^\ast(\sigma_2) \left(\Sigma_1^\ast (f \eta_1)\right) \\ &=
        {(F^\Sigma)}^\ast(\sigma_2) \Sigma_1^\ast(f) \frac{1}{\sigma_1} \theta_1\, ,
    \end{aligned}
\end{equation}
and hence ${(F^\Sigma)}^\ast \theta_2 = \theta_1$ if and only if
\begin{equation}
    \left(F^\Sigma\right)^\ast(\sigma_2) = \frac{\sigma_1}{\Sigma_1^\ast(f)}\, .
\end{equation}
{Furthermore, any system of coordinates $(x_k^i)_i$ on $M_k$ (with $k=1, 2$) induces a system of coordinates $(\Sigma^*x_k^i, \sigma_k)$  are a set of coordinates for $M^\Sigma_k$. Hence, $F^\Sigma$ is fully determined by $F$ and the value of ${(F^\Sigma)}^\ast(\sigma_2)$.}
This implies that each {conformal contactomorphism} has a unique symplectization. 



The commutativity of the diagram \eqref{eq:diagram_functioral_symplectization} follows from the fact that the composition of {conformal contactomorphisms} (respectively, exact symplectomorphisms) is a {conformal contactomorphism} (respectively, exact symplectomorphism), and the uniqueness of the symplectization of each {conformal contactomorphism}.
\end{proof}



The fact that the symplectization preserves the composition of morphisms permits to symplectize the vector fields. 
Let $X\in \X(M)$ be an infinitesimal {conformal contactomorphism}, and let $\phi_t$ be its flow. Notice that
\begin{equation}
 {\phi}_{t+r}^\Sigma = {(\phi_r \circ \phi_t)}^\Sigma =  {\phi}^\Sigma_t \circ {\phi}^\Sigma_r\, , 
\end{equation}
where these compositions are defined,
and thus ${\phi}_t^\Sigma$ is also a flow. Since it is the symplectization of {conformal contactomorphisms}, $\phi_t^\Sigma$ is a flow of homogeneous symplectomorphisms. Hence, its infinitesimal generator, ${X}^\Sigma$ is an infinitesimal homogeneous symplectomorphism. 
\begin{theorem}[Symplectization of vector fields]\label{thm:symp_fields}
    Let $\cdist$ be a contact distribution on a manifold $M$. Let $(M^\Sigma, \theta)$ be an exact symplectic manifold. A symplectization $\Sigma\colon M^\Sigma\to M$ provides a bijection between infinitesimal {conformal contactomorphisms} $X$ on $M$ and infinitesimal homogeneous symplectomorphisms $X^\Sigma$ on $M^\Sigma$ such that $X$ and $X^\Sigma$ are related by the bijection if and only if
    \begin{equation}
        {\T \Sigma} (X^\Sigma) = X\, .
    \end{equation}
    If $(M, \eta)$ is a {(co-oriented)} contact manifold with $\ker \eta = \cdist$, the vector field $X^\Sigma$ can also be characterized as the unique one that projects onto $X$ and satisfies
    \begin{equation}
        X^\Sigma(\sigma) = -\left(a_X\circ \Sigma\right)\cdot \sigma\, ,
    \end{equation}
    where $a_X$ is the conformal factor of $X$, i.e., $\liedv{X}\eta = a_X\eta$.
    We can also write it as
    \begin{equation}
        X^\Sigma = X^\sigma - \left(a_X\circ \Sigma\right)  \Delta\, ,
    \end{equation}
    where $X^\sigma$ is the unique vector field that projects onto $X$ and satisfies $X^\sigma(\sigma) = 0$.
    In particular, for the Reeb vector field $\Reeb \in \X(M)$ associated with $\eta$,
    \begin{equation}
        {\Reeb}^\Sigma = \Reeb^\sigma\, .
    \end{equation}
    Moreover, for any pair of infinitesimal {conformal contactomorphisms} $X$ and $Y$ on $M$, we have
    \begin{equation}\label{eq:symplectization_fields}
        \lieBr{X,Y}^\Sigma = \lieBr{X^\Sigma, Y^\Sigma}\, . 
    \end{equation} 
\end{theorem}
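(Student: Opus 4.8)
The plan is to rest everything on one uniqueness observation: among the vector fields on $M^\Sigma$ that are $\Sigma$-related to a fixed $X\in\X(M)$, at most one is an infinitesimal homogeneous symplectomorphism. The preceding paragraph already produces, for each infinitesimal contactomorphism $X$ with flow $\phi_t$, a flow $\phi_t^\Sigma$ of homogeneous symplectomorphisms, so I would first define $X^\Sigma$ as its infinitesimal generator, which is then automatically an infinitesimal homogeneous symplectomorphism. Differentiating the commutativity $\Sigma\circ\phi_t^\Sigma = \phi_t\circ\Sigma$ furnished by Theorem~\ref{thm:symplectization_contactomorphism} at $t=0$ gives $\T\Sigma\circ X^\Sigma = X\circ\Sigma$, i.e.\ $\Sigma_\ast X^\Sigma = X$, so $X^\Sigma$ projects onto $X$.

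For the uniqueness lemma, suppose $Z_1,Z_2$ both project onto $X$ and both preserve $\theta$. Because $\ker\T\Sigma=\gen{\Delta}$, we have $Z_1-Z_2 = g\Delta$ for some $g\in\Cinfty(M^\Sigma)$, and by linearity $\liedv{g\Delta}\theta=0$. Using $\contr{\Delta}\theta=\theta(\Delta)=0$ and $\contr{\Delta}\dd\theta=-\contr{\Delta}\omega=\theta$, a one-line Cartan computation gives $\liedv{g\Delta}\theta = g\theta$, so $g\theta=0$ and hence $g=0$ away from the singular set. This pins $X^\Sigma$ down uniquely and simultaneously yields the \enquote{if and only if} characterization by the projection relation. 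Surjectivity of $X\mapsto X^\Sigma$ then follows from the converse half of Theorem~\ref{thm:symplectization_contactomorphism}: given an infinitesimal homogeneous symplectomorphism $Z$ with flow $\psi_t$, each $\psi_t$ projects to a contactomorphism, these assemble into a flow by functoriality, and the generator $X$ of that flow satisfies $X^\Sigma=Z$ by uniqueness.

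Next I would read off the conformal-factor characterization. Expanding $0=\liedv{X^\Sigma}\theta=\liedv{X^\Sigma}(\sigma\,\Sigma^\ast\eta)$ and using that $X^\Sigma$ and $X$ are $\Sigma$-related, so $\liedv{X^\Sigma}\Sigma^\ast\eta=\Sigma^\ast\liedv{X}\eta=a_X\,\Sigma^\ast\eta$, yields $\bigl(X^\Sigma(\sigma)+a_X\sigma\bigr)\Sigma^\ast\eta=0$, whence $X^\Sigma(\sigma)=-a_X\sigma$ since $\Sigma^\ast\eta$ is nowhere zero; conversely any lift $Z$ of $X$ with $Z(\sigma)=-a_X\sigma$ reverses this computation to give $\liedv{Z}\theta=0$, hence $Z=X^\Sigma$. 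For the decomposition, write $X^\Sigma=X^\sigma+h\Delta$ with $X^\sigma$ the unique lift satisfying $X^\sigma(\sigma)=0$ (existence and uniqueness of which follow since lifts differ by multiples of $\Delta$ and $\Delta(\sigma)=\sigma\neq0$); applying to $\sigma$ and using $\Delta(\sigma)=\sigma$ forces $h=-a_X$, i.e.\ $X^\Sigma=X^\sigma-a_X\Delta$. The Reeb case is immediate, as $\liedv{\Reeb}\eta=0$ gives $a_{\Reeb}=0$.

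Finally, the bracket identity is a clean corollary of the uniqueness lemma. The field $\lieBr{X^\Sigma,Y^\Sigma}$ is $\Sigma$-related to $\lieBr{X,Y}$ (brackets of related fields are related), and it preserves $\theta$ because $\liedv{\lieBr{X^\Sigma,Y^\Sigma}}\theta=\liedv{X^\Sigma}\liedv{Y^\Sigma}\theta-\liedv{Y^\Sigma}\liedv{X^\Sigma}\theta=0$; since $\lieBr{X,Y}$ is again an infinitesimal contactomorphism, uniqueness forces $\lieBr{X^\Sigma,Y^\Sigma}=\lieBr{X,Y}^\Sigma$. I expect the main obstacle to be the surjectivity step, namely turning the per-$t$ projections $\psi_t\mapsto\phi_t$ into an honest flow of contactomorphisms whose generator is a genuine infinitesimal contactomorphism; this needs functoriality of the symplectization and a little care, whereas the remaining computations are short once the uniqueness lemma is in place.
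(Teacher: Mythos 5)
Your proof is correct, and its skeleton (defining $X^\Sigma$ as the generator of the symplectized flow, the Lie-derivative computation for $X^\Sigma(\sigma)=-a_X\sigma$, and the decomposition along $\ker\T\Sigma=\gen{\Delta}$ using $\Delta(\sigma)=\sigma$) matches the paper's. The genuine difference is where you locate the work. The paper gets uniqueness of $X^\Sigma$ from the flow-level uniqueness in Theorem~\ref{thm:symplectization_contactomorphism}, and then proves the bracket identity $\lieBr{X,Y}^\Sigma=\lieBr{X^\Sigma,Y^\Sigma}$ by invoking the general Lemma~\ref{lemma:Kolar} on vector bundle functors. You instead isolate an infinitesimal uniqueness lemma — any two $\theta$-preserving lifts of $X$ differ by $g\Delta$, and $\liedv{g\Delta}\theta=g\,\contr{\Delta}\dd\theta=g\theta$ forces $g=0$ since singular points of $\theta$ are excluded — and then derive the bracket identity as a corollary: $\lieBr{X^\Sigma,Y^\Sigma}$ is $\Sigma$-related to $\lieBr{X,Y}$, preserves $\theta$ because $\liedv{\lieBr{X^\Sigma,Y^\Sigma}}=\lieBr{\liedv{X^\Sigma},\liedv{Y^\Sigma}}$, and $\lieBr{X,Y}$ is again an infinitesimal contactomorphism, so uniqueness identifies the two fields. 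This buys you a self-contained argument that avoids the functorial machinery of Lemma~\ref{lemma:Kolar} entirely, at the cost of having to check by hand (as you note) that the projected maps $\psi_t\mapsto\phi_t$ assemble into a genuine flow of contactomorphisms for the surjectivity step — a point the paper also passes over lightly, and which can alternatively be handled infinitesimally by observing that $Z$ commutes with $\Delta$ (Proposition~\ref{proposition:homogeneous_Hamiltonian}), hence is projectable, and that $0=\liedv{Z}(\sigma\,\Sigma^\ast\eta)$ forces the projection to be an infinitesimal contactomorphism. Both routes are sound; yours is the more elementary of the two.
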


\begin{proof}
   By Theorem \ref{thm:symplectization_contactomorphism}, given the flow $\phi$ of an infinitesimal {conformal contactomorphism} $X$, there exists a unique flow $\phi^\Sigma$ which is made of exact symplectomorphisms and projects onto $\phi$. Hence, there exists a unique infinitesimal homogeneous symplectomorphism $X^\Sigma$, which is the infinitesimal generator of $\phi^\Sigma$. 
    Similarly, one can see that every infinitesimal homogeneous symplectomorphism projects onto an infinitesimal {conformal contactomorphism}.

    Consider the contact form $\eta$ generating $\cdist$ with conformal factor $\sigma$ (i.e., $\sigma \cdot \left(\Sigma^\ast \eta\right) = \theta$) and let $X$ be an infinitesimal {conformal contactomorphism} with conformal factor $a_X$ (i.e., $\liedv{X} \eta= a_X \eta$).
    {The associated infinitesimal homogeneous symplectomorphism $X^\Sigma$ satisfies
    \begin{align*}
         0 & = \liedv{X^\Sigma} \theta = \liedv{X^\Sigma}\big(\sigma \cdot \left(\Sigma^\ast\eta\right)\big) =
        X^\Sigma(\sigma) \left(\Sigma^\ast\eta\right)  + \sigma\cdot \liedv{X^\Sigma} \left(\Sigma^\ast\eta\right)\\
        & = X^\Sigma(\sigma) \left(\Sigma^\ast\eta\right)  + \sigma\cdot \Sigma^\ast \left( \liedv{X} \eta\right)
        = \left({X^\Sigma}(\sigma)  + \sigma\cdot \left(a_X\circ \Sigma\right)\right) \left(\Sigma^\ast\eta\right)\, ,
    \end{align*}
   which implies that ${X^\Sigma}(\sigma)  = - \sigma\cdot \left(a_X\circ \Sigma\right)$.}

    Since $\ker \T \Sigma = \gen{\Delta}$, the vector field $X^\Sigma$ is of the form $X^\sigma + b\Delta$ for some function $b\in \Cinfty(M^\Sigma)$. Thus,
    \begin{equation}
        -\left(a_X\circ \Sigma\right)\cdot \sigma  = X^\Sigma(\sigma) = b \Delta (\sigma) = b \sigma\, ,
    \end{equation}
    since $\sigma$ is homogeneous of degree $1$. Hence, $b = -\left(a_X\circ \Sigma\right)$.
     
    Finally, {let $\psi_t$ be the flow of $Y$.}
    {The Lie bracket can be written as (see \cite[p.~286]{Abraham1988})
    \begin{align*}
        [X^\Sigma,\,Y^\Sigma](x)
        & =\restr{\frac{\partial}{\partial \varepsilon}}{\varepsilon=0}\Big(
        \left(\psi^\Sigma_{-\sqrt{\varepsilon}}\circ\phi^\Sigma_{-\sqrt{\varepsilon}}\circ\psi^\Sigma_{\sqrt{\varepsilon}}\circ\phi^\Sigma_{\sqrt{\varepsilon}}\right)(x)\Big)\\
        & =\restr{\frac{\partial}{\partial \varepsilon}}{\varepsilon=0}\Big(
        \left(\psi_{-\sqrt{\varepsilon}}\circ\phi_{-\sqrt{\varepsilon}}\circ\psi_{\sqrt{\varepsilon}}\circ\phi_{\sqrt{\varepsilon}}\right)^\Sigma(x) \Big)
        = [X,Y]^\Sigma(x)\, , \quad \forall\, x \in M^\Sigma\, .
    \end{align*}}
\end{proof}

Since the contact Hamiltonian vector fields and the functions on $(M, \eta)$ on one side, and the homogeneous Hamiltonian vector fields and the homogeneous functions on $(M^\Sigma, \theta)$ on the other side, are in one-to-one correspondence, there is a bijection between both sets of functions.

\begin{theorem}\label{thm:symp_functions}
    Let $(M, \eta)$ be a {(co-oriented)} contact manifold and $(M^\Sigma, \theta)$ an exact symplectic manifold.
    For each $f\in \Cinfty(M)$, let $X_f\in \X(M)$ denote the Hamiltonian vector field with respect to $\eta$. 
    For each $f^\Sigma\in \Cinfty(M^\Sigma)$, let ${X_{f^\Sigma}\in \X(M^\Sigma)}$ denote the Hamiltonian vector field with respect to $\theta$. 
    Given a symplectization $\Sigma\colon M^\Sigma \to M$ with conformal factor $\sigma$, there is a bijection between functions $f$ on $M$ and homogeneous functions of degree 1 $f^\Sigma$ on $M^\Sigma$ such that
    \begin{equation}
        {\T \Sigma} \left(X_{f^\Sigma}\right) = X_f\, .
    \end{equation}
    This bijection is given by
    \begin{equation}
        f^\Sigma = -\sigma \cdot \left(\Sigma^\ast f\right)\, .
    \end{equation}
    Moreover, one has
    \begin{equation}
        \left\{f^\Sigma, g^\Sigma \right\}_\theta = \left\{f, g \right\}^\Sigma_\eta\, ,
    \end{equation}
    where $\{\cdot, \cdot\}_\theta$ and $\{\cdot, \cdot\}_\eta$ denote the Poisson bracket of $(M^\Sigma, \theta)$ and the Jacobi bracket of $(M, \eta)$, respectively. Note that this implies that $\Sigma$ is a conformal Jacobi morphism, i.e., $\left\{\sigma \cdot \left(\Sigma^\ast f\right), \sigma \cdot \left(\Sigma^\ast g\right)\right\}_\theta = \sigma \cdot \left( \Sigma^\ast \{f, g\}_\eta\right)$.
    
\end{theorem}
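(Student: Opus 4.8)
The plan is to assemble the bijection from three correspondences that are already proven and then to transport the brackets along it. First I would recall that $f\mapsto X_f$ is a bijection from $\Cinfty(M)$ onto the infinitesimal contactomorphisms of $(M,\eta)$ (Proposition~\ref{prop:hamiltonian_bijection}), that $X\mapsto X^\Sigma$ is a bijection from infinitesimal contactomorphisms onto infinitesimal homogeneous symplectomorphisms of $(M^\Sigma,\theta)$ with $\Sigma_\ast X^\Sigma = X$ (Theorem~\ref{thm:symp_fields}), and that $Y\mapsto \theta(Y)$ is a bijection from infinitesimal homogeneous symplectomorphisms onto degree-$1$ homogeneous functions, with inverse $h\mapsto X_h$ (Proposition~\ref{proposition:homogeneous_Hamiltonian}). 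Composing the three maps defines $f^\Sigma \coloneqq \theta\big((X_f)^\Sigma\big)$, a bijection between $\Cinfty(M)$ and the degree-$1$ homogeneous functions on $M^\Sigma$; by the last correspondence its Hamiltonian vector field is $X_{f^\Sigma} = (X_f)^\Sigma$, so that $\Sigma_\ast X_{f^\Sigma} = X_f$, which is the asserted intertwining.

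Next I would compute the explicit form of the bijection. Using $\theta = \sigma\,(\Sigma^\ast\eta)$, the relation $\Sigma_\ast (X_f)^\Sigma = X_f$, and $\eta(X_f) = -f$ from \eqref{lieder_eta}, one obtains
\begin{equation*}
 f^\Sigma = \theta\big((X_f)^\Sigma\big) = \sigma\,(\Sigma^\ast\eta)\big((X_f)^\Sigma\big) = \sigma\,\Sigma^\ast\big(\eta(X_f)\big) = -\sigma\,(\Sigma^\ast f)\, ,
\end{equation*}
which is the claimed formula.

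For the bracket identity I would use that both assignments $f\mapsto X_f$ (an anti-isomorphism for the Jacobi bracket, Proposition~\ref{prop:hamiltonian_bijection}) and $h\mapsto X_h$ (for the Poisson bracket, as in Lemma~\ref{lemma:linear_combination_functions}) satisfy $X_{\{\cdot,\cdot\}} = -[X_{\cdot}, X_{\cdot}]$, together with the functoriality of the symplectization on Lie brackets, \eqref{eq:symplectization_fields}. Indeed,
\begin{equation*}
 X_{\{f,g\}^\Sigma_\eta} = \big(X_{\{f,g\}_\eta}\big)^\Sigma = -\,[X_f, X_g]^\Sigma = -\,[(X_f)^\Sigma, (X_g)^\Sigma] = -\,[X_{f^\Sigma}, X_{g^\Sigma}] = X_{\{f^\Sigma, g^\Sigma\}_\theta}\, .
\end{equation*}
Both $\{f,g\}^\Sigma_\eta$ and $\{f^\Sigma, g^\Sigma\}_\theta$ are homogeneous of degree $1$ (the latter because $\Delta$ commutes with $X_{f^\Sigma}$ and $X_{g^\Sigma}$ while $f^\Sigma, g^\Sigma$ are degree $1$), so the reconstruction $h = \theta(X_h)$, valid for degree-$1$ homogeneous functions, forces $\{f,g\}^\Sigma_\eta = \{f^\Sigma, g^\Sigma\}_\theta$. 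Substituting $f^\Sigma = -\sigma\,(\Sigma^\ast f)$ and $g^\Sigma = -\sigma\,(\Sigma^\ast g)$ into this identity and using bilinearity then yields the conformal Jacobi morphism property stated at the end.

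I expect the linchpin to be the identification $X_{f^\Sigma} = (X_f)^\Sigma$, i.e. that the symplectic Hamiltonian vector field of the transported function coincides with the symplectization of the contact Hamiltonian vector field; this is precisely what makes the three correspondences compatible, and it is where Proposition~\ref{proposition:homogeneous_Hamiltonian} does the real work. A secondary point that requires care is the degree-$1$ homogeneity of $\{f^\Sigma, g^\Sigma\}_\theta$, since deducing $\{f,g\}^\Sigma_\eta = \{f^\Sigma, g^\Sigma\}_\theta$ from the equality of Hamiltonian vector fields relies on it: a possible additive constant would be homogeneous of degree $0$, not $1$, and is thereby excluded. Throughout, I would keep track of the sign conventions in the two bracket-to-bracket formulas, as these are what make the chain of equalities close.
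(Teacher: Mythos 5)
Your proposal is correct and follows essentially the same route as the paper: the bijection is obtained by composing $f\mapsto X_f$, $X\mapsto X^\Sigma$ and $Y\mapsto\theta(Y)$, the formula $f^\Sigma=-\sigma\,(\Sigma^\ast f)$ comes from the same chain $\theta\big((X_f)^\Sigma\big)=\sigma\,\Sigma^\ast\big(\eta(X_f)\big)$, and the bracket identity is deduced from the two anti-isomorphisms together with \eqref{eq:symplectization_fields}. Your explicit remark that degree-$1$ homogeneity rules out an additive constant when recovering the function from its Hamiltonian vector field is a detail the paper leaves implicit, but it does not change the argument.
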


\begin{proof}
    Indeed,
    \begin{equation}
        f^\Sigma = \theta(X_{f^\Sigma}) = \theta({(X_f)^\Sigma })
        = \sigma \cdot \left( \Sigma^\ast(\eta)((X_f)^\Sigma )\right) = \sigma \cdot \left(\Sigma^\ast(\eta(X_{f}))\right) = -\sigma \cdot \left(\Sigma^\ast f\right)\, .
    \end{equation}
    The other claims follow from Proposition \ref{prop:hamiltonian_bijection}, together with the equivalent result for symplectic geometry (i.e., $X_{\jacBr{f^\Sigma,g^\Sigma}_\theta} = - \lieBr{X_{f^\Sigma},X_{g^\Sigma}}$), and the fact that symplectization preserves the Lie brackets (equation~\eqref{eq:symplectization_fields}).
\end{proof}

\begin{corollary}\label{corollary:dissipated_conseved}
    A function $f$ on $M$ is a dissipated quantity with respect to $(M, \eta, h)$ if and only if $f^\Sigma$ is a conserved quantity with respect to $(M^\Sigma, \theta, h^\Sigma)$.
\end{corollary}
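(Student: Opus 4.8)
The plan is to reduce the statement entirely to the bracket identity of Theorem~\ref{thm:symp_functions}, so that the corollary becomes a matter of chaining equivalences. First I would restate both dynamical conditions in terms of brackets. On the contact side, the observation following the definition of dissipated quantity tells us that $f$ is a dissipated quantity with respect to $(M,\eta,h)$ if and only if $\{f,h\}_\eta=0$, where $\{\cdot,\cdot\}_\eta$ is the Jacobi bracket of $(M,\eta)$. On the symplectic side, $f^\Sigma$ is a conserved quantity with respect to $(M^\Sigma,\theta,h^\Sigma)$ precisely when $X_{h^\Sigma}(f^\Sigma)=0$, which by the definition of the Poisson bracket is equivalent to $\{f^\Sigma,h^\Sigma\}_\theta=0$.

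Second, I would invoke Theorem~\ref{thm:symp_functions} directly. It asserts both that the symplectization of functions is given by $g^\Sigma=-\sigma\,(\Sigma^\ast g)$ and that it intertwines the two brackets, namely
\begin{equation}
    \{f^\Sigma,h^\Sigma\}_\theta=\{f,h\}^\Sigma_\eta=-\sigma\,\big(\Sigma^\ast\{f,h\}_\eta\big)\, .
\end{equation}
Thus the Poisson bracket of the symplectized functions is the symplectization of the Jacobi bracket of the original functions.

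Third, I would argue that the map $g\mapsto g^\Sigma=-\sigma\,(\Sigma^\ast g)$ is injective, so that vanishing of a symplectized function is equivalent to vanishing of the function itself. This is where the hypotheses on $\Sigma$ enter: since $\sigma$ is nowhere-vanishing and $\Sigma$ is a surjective submersion, $-\sigma\,(\Sigma^\ast\{f,h\}_\eta)$ vanishes identically on $M^\Sigma$ if and only if $\{f,h\}_\eta$ vanishes identically on $M$. Combining the three steps yields the chain of equivalences
\begin{equation}
    f \text{ dissipated}\iff \{f,h\}_\eta=0 \iff \{f^\Sigma,h^\Sigma\}_\theta=0 \iff f^\Sigma \text{ conserved},
\end{equation}
which is the claim.

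I do not expect a genuine obstacle here, since the analytic content is entirely absorbed into Theorem~\ref{thm:symp_functions}; the corollary is a bookkeeping consequence of it. The only point requiring a word of care is the injectivity of the symplectization of functions used in the third step, together with checking that $h$ indeed corresponds to $h^\Sigma$ under the bijection of Theorem~\ref{thm:symp_functions} (which is part of the hypothesis of the corollary). Everything else is a direct substitution.
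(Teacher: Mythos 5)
Your argument is correct and is essentially the paper's own (implicit) proof: the corollary is stated without a separate argument precisely because it follows from the bracket identity $\{f^\Sigma,h^\Sigma\}_\theta=-\sigma\,(\Sigma^\ast\{f,h\}_\eta)$ of Theorem~\ref{thm:symp_functions}, combined with the observation in Section~\ref{sec_preliminaries} that $f$ is dissipated iff $\{f,h\}_\eta=0$ and the fact that $\sigma$ is nowhere-vanishing and $\Sigma$ surjective. Your extra care about injectivity of $g\mapsto -\sigma\,(\Sigma^\ast g)$ is exactly the right point to make explicit.
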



\begin{remark}
    Note that a function $\hat{f}: M^\Sigma \to \R$ is homogeneous of degree $0$ if and only if one can write $\hat{f} = \Sigma^\ast f$ for some $f:M \to \R$. This follows from Theorem~\ref{thm:symp_functions} and the fact that a function $\hat{f}$ is homogeneous of degree $0$ if and only if the function $-\sigma \hat{f}$ is homogeneous of degree $1$.
\end{remark}

Additionally, there is a one-to-one correspondence between homogeneous integrable systems and completely integrable contact systems.

\begin{proposition}
    Let $(M, \eta)$ be a contact manifold. Suppose that $\Sigma\colon M^\Sigma \to M$ is a symplectization such that $\theta = \sigma \cdot (\Sigma^\ast \eta)$ is the symplectic potential on $M^\Sigma$. Then, $(M^\Sigma, \theta, F^\Sigma)$, with $F^\Sigma=-\sigma \cdot (\Sigma^\ast F)$, is a homogeneous integrable system if and only if $(M, \eta, F)$ is a completely integrable contact system.
\end{proposition}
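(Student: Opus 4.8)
The plan is to verify the three defining properties of a homogeneous integrable system for $(M^\Sigma, \theta, F^\Sigma)$ one at a time, translating each into the corresponding property of $(M, \eta, F)$. Two of the three are immediate from Theorem~\ref{thm:symp_functions}. That theorem identifies $f_\alpha^\Sigma = -\sigma(\Sigma^\ast f_\alpha)$ as precisely the homogeneous functions of degree $1$ associated with the $f_\alpha$, so the homogeneity requirement holds automatically. It also gives $\{f_\alpha^\Sigma, f_\beta^\Sigma\}_\theta = \{f_\alpha, f_\beta\}_\eta^\Sigma = -\sigma(\Sigma^\ast\{f_\alpha, f_\beta\}_\eta)$; since $\sigma$ is nowhere-vanishing and $\Sigma$ is a submersion (so $\Sigma^\ast$ is injective on functions), the left-hand side vanishes identically if and only if $\{f_\alpha, f_\beta\}_\eta = 0$. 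Hence the $f_\alpha^\Sigma$ are in involution for $\{\cdot,\cdot\}_\theta$ exactly when the $f_\alpha$ are in involution for the Jacobi bracket $\{\cdot,\cdot\}_\eta$.

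The substantive point is the independence condition, which I would reduce to a pointwise rank comparison between $\T F^\Sigma$ and $\T F$. Fix $p\in M^\Sigma$ with $x = \Sigma(p)$. Using $\ker\T_p\Sigma = \langle\Delta\rangle$ together with $\dd\sigma(\Delta) = \Delta(\sigma) = \sigma \neq 0$, one gets the splitting $\cT_p M^\Sigma = \Sigma^\ast(\cT_x M) \oplus \langle\dd\sigma\rangle$. Differentiating $f_\alpha^\Sigma = -\sigma(\Sigma^\ast f_\alpha)$ gives
\[
\dd f_\alpha^\Sigma = -\sigma\,\Sigma^\ast(\dd f_\alpha) - f_\alpha(x)\,\dd\sigma ,
\]
so that, for any $\lambda = (\lambda^\alpha) \in \RR^{n+1}$,
\[
\sum_\alpha \lambda^\alpha\,\dd f_\alpha^\Sigma = -\sigma\,\Sigma^\ast\Big(\sum_\alpha \lambda^\alpha\,\dd f_\alpha\Big) - \Big(\sum_\alpha \lambda^\alpha f_\alpha(x)\Big)\dd\sigma .
\]
By the splitting this vanishes if and only if both $\sum_\alpha \lambda^\alpha\,\dd f_\alpha|_x = 0$ and $\sum_\alpha \lambda^\alpha f_\alpha(x) = 0$ (the second condition is just the Euler relation $\dd f_\alpha^\Sigma(\Delta) = f_\alpha^\Sigma$ evaluated on $\Delta$). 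Writing $r = \operatorname{rank}\T_x F$, it follows that $\operatorname{rank}\T_p F^\Sigma$ equals either $r$ or $r+1$: it is $r+1$ precisely when some dependency $\mu$ of the $\dd f_\alpha|_x$ satisfies $\sum_\alpha \mu^\alpha f_\alpha(x) \neq 0$. Since $\operatorname{rank}\T_p F^\Sigma \le r+1$, the value $n+1$ forces $r \ge n$, which already settles the direction ``homogeneous integrable $\Rightarrow$ completely integrable contact.''

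The main obstacle is the converse at the boundary case $r = n$. There the $\dd f_\alpha|_x$ admit a one-dimensional space of dependencies $\langle\mu\rangle$, and $\operatorname{rank}\T_p F^\Sigma = n+1$ requires the extra radial covector $\dd\sigma$ to supply the missing rank, i.e. $\sum_\alpha \mu^\alpha f_\alpha(x) \neq 0$. This is where involution must enter: forming $\sum_\alpha\mu^\alpha\flat(X_{f_\alpha})$ from $\flat(X_{f_\alpha}) = \dd f_\alpha - (\Reeb(f_\alpha)+f_\alpha)\eta$ shows that $\sum_\alpha\mu^\alpha X_{f_\alpha}|_x$ is a multiple of $\Reeb$, and feeding the involution relations $\{f_\alpha,f_\beta\}_\eta = X_{f_\alpha}(f_\beta) + f_\beta\Reeb(f_\alpha) = 0$ into this identity constrains the two scalars $\sum_\alpha\mu^\alpha f_\alpha(x)$ and $\sum_\alpha\mu^\alpha\Reeb(f_\alpha)(x)$, in fact forcing the latter to vanish.

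What remains delicate is that involution alone pins down $\sum_\alpha\mu^\alpha\Reeb(f_\alpha)(x) = 0$ but leaves $\sum_\alpha\mu^\alpha f_\alpha(x)$ a priori unconstrained; the naive reading of ``$\operatorname{rank}\T F \ge n$'' does not by itself guarantee its nonvanishing (already $(f_0,c f_0)$ with $c$ constant is in involution of rank $n$ yet has $f_1^\Sigma = c f_0^\Sigma$ dependent). Thus I expect the equivalence to be closed only by reading the rank hypothesis, via the alternative characterizations of Remark~\ref{rem:characterization_integrable_systems}, as the independence of the pairs $(\dd f_\alpha, f_\alpha)$ — equivalently, of the $f_\alpha^\Sigma$ — on the regular locus $M_0$, together with the nonvanishing of $F$ on the ray-preimages $M_{\ray{\Lambda}}$ that carry the dynamics. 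Isolating that precise formulation, rather than the bracket-and-homogeneity bookkeeping, is the real content of the proof.
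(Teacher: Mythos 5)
The paper states this proposition without proof, so there is no argument of record to compare yours against; the only place the key point surfaces is in the proof of Theorem~\ref{thm:main_theorem} (Section~\ref{sec:proof}), where the authors assert in one line that since ``$\sigma$ is functionally independent of $\Sigma^\ast f_\alpha$, it suffices that $\dd f_\alpha$ have rank $n$ for $\dd f_\alpha^\Sigma$ to have rank $n+1$.'' Your proposal reconstructs exactly what that line is hiding, and does so correctly: the splitting $\cT_p M^\Sigma = \Sigma^\ast(\cT_x M)\oplus\langle\dd\sigma\rangle$, the resulting criterion that $\sum_\alpha\lambda^\alpha\dd f_\alpha^\Sigma=0$ at $p$ if and only if both $\sum_\alpha\lambda^\alpha\dd f_\alpha|_x=0$ and $\sum_\alpha\lambda^\alpha f_\alpha(x)=0$, and the transfer of involution and homogeneity via Theorem~\ref{thm:symp_functions} are all right, and the forward implication is complete.

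The ``delicate point'' you flag in the converse is not a defect of your argument but a genuine imprecision in the statement as literally written: with Definition~\ref{def:integrable_systems} read at face value ($\{f_\alpha,f_\beta\}=0$ and $\operatorname{rank}\T F\geq n$), the converse fails, and your $(f_0,\dots,f_{n-1},cf_{n-1})$-type example witnesses this --- the functions are in involution and $\T F$ has rank $n$, yet $f_n^\Sigma=c\,f_{n-1}^\Sigma$, so the $f_\alpha^\Sigma$ are not independent. Your bracket computation is also sound: involution does force $\sum_\alpha\mu^\alpha\Reeb(f_\alpha)(x)=0$ for a pointwise dependency $\mu$, but it only yields $\bigl(\sum_\alpha\mu^\alpha f_\alpha(x)\bigr)\Reeb(f_\beta)(x)=0$ for all $\beta$, which cannot rule out $\sum_\alpha\mu^\alpha f_\alpha(x)=0$. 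The hypothesis that actually makes the proposition an equivalence is the one you isolate, namely that $\hat F=S\circ F$ is a submersion (the third characterization in Remark~\ref{rem:characterization_integrable_systems}), i.e.\ $\Im\T_xF+\langle F(x)\rangle=\RR^{n+1}$; by your splitting this is precisely equivalent to $\operatorname{rank}\T_pF^\Sigma=n+1$. With that reading your proof closes, and the residual issue lies with the paper's definition rather than with any missing idea on your side.
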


\subsection{Proof of the Liouville--Arnold theorem for contact Hamiltonian systems}\label{subsec:proof}

In this subsection, Theorem~\ref{thm:main_theorem} is proven.
{We will start by symplectizing the completely integrable contact system $(M, \eta, F)$, obtaining the homogeneous integrable system $(M^\Sigma, \theta, F^\Sigma)$, and ensuring that $(M^\Sigma, \theta, F^\Sigma)$ satisfies the hypothesis of Theorem~\ref{theorem:LA_exact_symp}). After that, we will construct action-angle coordinates for $(M, \eta, F)$ from the ones for $(M^\Sigma, \theta, F^\Sigma)$.}


Hereinafter, consider the symplectization $\Sigma\colon M^\Sigma =  M\times \R_+\to M$, where $\Sigma$ is the canonical projection on the first component of the product manifold, and $\R_+$ denotes the positive real half-line. Its conformal factor is $\sigma=s$, with $s$ the global coordinate of $\R_+$.
    Let $F^\Sigma=(f_0^\Sigma, \ldots, f_n^\Sigma)\colon M\times\R_+\to \R^{n+1}$, where  $f_\alpha^\Sigma = -\sigma \cdot \left(\Sigma^\ast f_\alpha\right)$. Let $\lambda\in \R^{n+1}\setminus\{0\}$. Then,
    \begin{equation}
        (F^\Sigma)^{-1}(\lambda) = \{(x,s)\in  M\times\R_+\mid - s F \circ \Sigma(x,s) = \lambda\} = \left\{(x,s)\in  M\times\R_+\mid  F (x) = -\frac{\lambda}{s}\right\}\, ,
    \end{equation}
    which implies that
    \begin{equation}
        \Sigma \big( (F^\Sigma)^{-1}(\lambda) \big) 
        = \left\{x \in  M \mid \exists\, s\in \R_+\colon\   F (x) = -\frac{\lambda}{s}\right\} 
        = M_{\ray{-\lambda}}\, .
    \end{equation}
    Since $X_{f_\alpha^\Sigma}$ are tangent to $(F^\Sigma)^{-1}(\lambda)$, the Hamiltonian vector fields $X_{f_\alpha}={\T \Sigma} \left(X_{f_\alpha^\Sigma}\right)$ are tangent to $M_{\ray{-\lambda}}$.
    
    {Let us recall that we are assuming that $\lambda\neq 0$.}
    Since $\dd f_\alpha^\Sigma = \sigma \cdot \left(\Sigma^\ast \dd f_\alpha\right) + \Sigma^\ast \left(f_\alpha\right) \dd \sigma$ and $\sigma$ is functionally independent of $\Sigma^\ast f_\alpha$, it suffices that $\dd f_\alpha$ have rank $n$ for $\dd f_\alpha^\Sigma$ to have rank $n+1$. The fact that $f_\alpha^\Sigma$ are functionally independent implies that $X_{f_\alpha^\Sigma}$, and then $X_{f_\alpha}$, are linearly independent. On the other hand, since $f_\alpha$ are in involution, their Hamiltonian vector fields commute. Hence, by Theorem~\ref{lemma:Arnold_tangent_fields}, $M_{\ray{\lambda}}$ is diffeomorphic to $\TT^k\times \R^{n+1-k}$ for some $k\leq n+1$.

    The tangent space $\T M_{\ray{\lambda}}$ is spanned by $X_{f_{\alpha}}$. Hence, its annihilator, $\T M_{\ray{\lambda}}^\circ$, is spanned by the one-forms $\Omega_{\alpha\beta}=f_\alpha \dd f_\beta - f_\beta \dd f_\alpha$. Indeed, using the identity~\eqref{eq:Hamiltonian_vf_Jacobi_bracket}, we can write
    \begin{equation}
        \contr{X_{f_\gamma}} \Omega_{\alpha \beta} = f_\alpha X_{f_\gamma} (f_\beta) - f_\beta X_{f_\gamma} f_\alpha
        = f_\alpha \{f_\gamma, f_\beta\} - f_\beta \{f_\gamma, f_\alpha\} = 0\, ,
    \end{equation}
    for any $\alpha, \beta, \gamma \in \{0, \ldots, n\}$. Since $\T F$ has rank at least $n$, there are at least $n$ independent forms $\Omega_{\alpha\beta}$. Thus, $\T M_{\ray{\lambda}}^{\perp_\lambda}$ is spanned by the vector fields $Y_{\alpha \beta}= \lsharp(\Omega_{\alpha \beta})$. Then, 
    \begin{align}
        \Omega_{\alpha \beta} (Y_{\gamma \delta}) 
        & = f_\alpha f_\gamma \lambda ( \dd f_\beta, \dd f_\delta )
        - f_\beta f_\gamma \lambda ( \dd f_\alpha, \dd f_\delta )
        - f_\alpha f_\delta \lambda ( \dd f_\beta, \dd f_\gamma )
        + f_\beta f_\delta \lambda ( \dd f_\alpha, \dd f_\gamma ) \\
        & = f_\alpha f_\gamma \left\{ f_\beta,  f_\delta \right\}
        - f_\beta f_\gamma \left\{  f_\alpha,  f_\delta  \right\}
        - f_\alpha f_\delta \left\{ f_\beta,  f_\gamma  \right\}
        + f_\beta f_\delta \left\{  f_\alpha,  f_\gamma  \right\} = 0,
    \end{align}
    which implies that $\T M_{\ray{\lambda}}^{\perp_\lambda} \subseteq \T M_{\ray{\lambda}}$, i.e., $M_{\ray{\lambda}}$ is coisotropic. 

    Since $f_\alpha$'s are in involution with respect to the Jacobi bracket, $f_\alpha^\Sigma$ are in involution with respect to the Poisson bracket. The Hamiltonian vector fields $X_{f_\alpha^\Sigma}$ are complete since $X_{f_\alpha}$ are complete. 
    Observe that the map $F^\Sigma$ is given by $F^\Sigma(x, r) = -r F(x)$, for each $(x, r) \in M \times \RR_+$.
    Let $\tilde{U} = \Sigma^{-1}(U)$. Since the map $\restr{F}{U}\colon U \to B$ is a trivial bundle over a domain $V \subseteq B$, the map $\restr{F^\Sigma}{\tilde{U}}\colon \tilde{U} \to B$ is also a trivial bundle over $V$.
    Finally, as it has been proven above, $\dd f_\alpha^\Sigma$ has rank $n+1$.

    {By the preceeding paragraph, all the hypotheses of Theorem~\ref{theorem:LA_exact_symp} hold for the functions $f_\alpha^\Sigma$ on the exact symplectic manifold $(M^\Sigma, \theta)$.} Hence, there are coordinates $(y^\alpha_\Sigma, A_\alpha^\Sigma)$ on $\tilde U$ such that 
    \begin{equation}
        \theta = A_\alpha^\Sigma \dd y^\alpha_\Sigma\, ,\qquad
        A_\alpha^\Sigma = M^\beta_\alpha f_\beta^\Sigma\, , 
    \end{equation}
    where $M_\alpha^\beta$ are homogeneous functions of degree $0$ depending only on the functions $(f_0^\Sigma, \ldots, f_n^\Sigma)$ (see Lemma~\ref{lemma:linear_combination_functions}), and 
    \begin{equation}
        X_{f_\alpha^\Sigma} = N_{\alpha}^\beta \frac{\partial}{\partial y^\beta_\Sigma}\, , 
    \end{equation}
    where $(N^\alpha_\beta)$ denotes the inverse matrix of $(M^\alpha_\beta)$.

    {Finally, we will construct action-angle coordinate in $M$ from the homogeneous action-angle coordinates in $M^\Sigma$.}
    By construction, $A_\alpha^\Sigma$ are homogeneous functions of degree 1, and therefore there exist functions $A_\alpha$ on $M$ such that $A_\alpha^\Sigma = -\sigma \cdot \left(\Sigma^\ast A_\alpha\right)$. 
    Similarly, since $y^\alpha_\Sigma$ are homogeneous of degree 0, there exist functions $y^\alpha$ on $M$ such that $y^\alpha_\Sigma = \Sigma^\ast y^\alpha$. 
    Since $\sigma \cdot \left(\Sigma^\ast \eta\right) = \theta$, the contact form is given by
    \begin{equation}
        \eta = A_\alpha \dd y^\alpha\, .
    \end{equation}
    The functions $A_\alpha$ are dissipated quantities since they are linear combinations of dissipated quantities. Moreover,
    \begin{equation}
        f_\alpha = \overline{M}^\beta_\alpha A_\beta\, , \quad X_{f_\alpha} = \overline{N}_\alpha^\beta \frac{\partial}{\partial y^\beta}\, ,
    \end{equation}
    where $\overline{M}_\alpha^\beta$ and $\overline{N}_\alpha^\beta$ are functions on $M$ such that $M_\alpha^\beta = \Sigma^\ast \overline{M}_\alpha^\beta$ and $N_\alpha^\beta = \Sigma^\ast \overline{N}_\alpha^\beta$. Hence, in coordinates $(y^\alpha, A_i)$ the contact Hamilton equations read
    \begin{equation}
    \begin{aligned}
        &\dot y^\alpha = \Omega^\alpha \, ,\\
        &\dot{\tilde{A}}_i = 0\, ,
    \end{aligned}
    \end{equation}
    where $\Omega^\alpha = \overline{N}^\alpha_\beta$ if $f_\beta\equiv h$ is the Hamiltonian function.

    Since $\lambda\neq 0$, there is at least one nonvanishing $f_\alpha$. Hence, there is at least one nonvanishing $A_\alpha$. By relabeling the $A_\alpha$ if necessary, without loss of generality one can assume that $A_0\neq 0$. Defining $\tilde A_i = -A_i/A_0$ and 
    $$\tilde \eta = \frac{1}{A_0} \eta = \dd y^0 - \tilde A_i \dd y^i,$$
    one can observe that $(y^i, \tilde A_i, y^0)$ are Darboux coordinates for the contact manifold $(M, \tilde \eta)$.
    \begin{flushright}
        \qedsymbol
    \end{flushright}

\section{Integrable systems, Legendrian and coisotropic submanifolds}\label{sec:coisotropic}
The aim of this section is to use the properties of coisotropic submanifolds in order to produce alternative characterizations of completely integrable contact systems. This provides a richer information about the geometry of contact completely integrable systems. In addition, these results will be useful to compare our results with existing literature in the next section.

First, we review how the Jacobi structure changes when we modify our contact form $\eta$ to another one $\bar{\eta}$ sharing the same contact distribution (see \cite{Dazord1991}). 

\begin{proposition}\label{prop:conformal_equivalence}
    Let $(M, \eta)$ be a contact manifold. Let $\bar{\eta} = a \eta$, where $a\colon M \to \R$ is nowhere-vanishing function, be a conformally equivalent contact form. Denote by $\JacLambda, \bar{\JacLambda}$ the bivectors of the Jacobi structures, by $X_f, \bar{X}_f$ the Hamiltonian vector fields of $f:M \to \R$, by $\Reeb, \bar{\Reeb}$ the Reeb vector fields, and by $\jacBr{\cdot, \cdot}_{\eta}, \jacBr{\cdot, \cdot}_{\bar{\eta}}$ the Jacobi brackets with respect to $\eta$ and $\bar{\eta}$, respectively. Then, 
    \begin{enumerate}
        \item $X_{f} = \bar{X}_{af}$,
        \item $-X_{1/a} = \bar{\Reeb}$,
        \item $\JacLambda = a\bar{\JacLambda}$,
        \item $\jacBr{f,g}_{\bar{\eta}} = a \jacBr{f/a, g/a}_{\eta}$.
    \end{enumerate}
    As a consequence of the third statement, the Jacobi complement coincides for conformally equivalent forms. Thus, a submanifold is coisotropic (resp.~Legendrian) for one contact structure if and only if it is coisotropic (resp.~Legendrian) for every other contact structure conformal to it.
\end{proposition}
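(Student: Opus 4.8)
The plan is to derive all four identities, and then the consequence, from the bijection of Proposition~\ref{prop:hamiltonian_bijection} together with one structural observation: the notion of infinitesimal contactomorphism depends only on the contact distribution $\ker\eta = \ker\bar\eta$, which is unchanged under rescaling by the nowhere-vanishing $a$. Thus the two Hamiltonian correspondences $f \mapsto X_f$ and $f \mapsto \bar{X}_f$ are bijections onto the \emph{same} space of infinitesimal contactomorphisms, with inverses $X \mapsto -\eta(X)$ and $X \mapsto -\bar\eta(X)$ respectively. Statement (i) is then immediate: $X_f$ is an infinitesimal contactomorphism, so as an element of that space it equals $\bar{X}_g$ with $g = -\bar\eta(X_f) = -a\,\eta(X_f) = af$, i.e.\ $X_f = \bar{X}_{af}$. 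For (ii), the Reeb field $\bar\Reeb$ of $\bar\eta$ is an infinitesimal strict contactomorphism (indeed $\liedv{\bar\Reeb}\bar\eta = \contr{\bar\Reeb}\dd\bar\eta + \dd(\bar\eta(\bar\Reeb)) = 0$), so reading it through the $\eta$-correspondence gives $\bar\Reeb = X_g$ with $g = -\eta(\bar\Reeb) = -\tfrac1a\bar\eta(\bar\Reeb) = -\tfrac1a$; by linearity of $f \mapsto X_f$ this is $\bar\Reeb = -X_{1/a}$.

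For (iii) I would start from the identity $\lsharp(\dd f) = X_f + f\Reeb$ and its barred analogue $\sharp_{\bar\Lambda}(\dd f) = \bar{X}_f + f\bar\Reeb$. Substituting (i) and (ii) gives $\sharp_{\bar\Lambda}(\dd f) = X_{f/a} - f X_{1/a}$, and then expanding $X_{f/a}$ and $X_{1/a}$ once more via the same identity, using $\dd(f/a) = \tfrac1a\dd f - \tfrac{f}{a^2}\dd a$, $\dd(1/a) = -\tfrac1{a^2}\dd a$, and the $\Cinfty(M)$-linearity of $\lsharp$, the $\lsharp(\dd a)$- and $\Reeb$-contributions cancel, leaving $\sharp_{\bar\Lambda}(\dd f) = \tfrac1a\lsharp(\dd f)$. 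Since the exact one-forms $\dd f$ span each cotangent fibre and $\lsharp$ is tensorial, this yields $\sharp_{\bar\Lambda} = \tfrac1a\lsharp$ as bundle maps, equivalently $\Lambda = a\bar\Lambda$. Statement (iv) then follows by inserting $\bar\Lambda = \tfrac1a\Lambda$ and $\bar E = -\bar\Reeb = X_{1/a}$ into the defining formula $\{f,g\}_{\bar\eta} = \bar\Lambda(\dd f,\dd g) + f\bar E(g) - g\bar E(f)$ and matching it term by term against the expansion of $a\{f/a, g/a\}_\eta$; this is a routine computation in which the $\Reeb(a)$-terms cancel.

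Finally, the consequence about coisotropic and Legendrian submanifolds is a direct reading of (iii). Because $a$ is nowhere vanishing, the pointwise relation $\sharp_{\bar\Lambda} = \tfrac1a\lsharp$ shows that for any codistribution $D^\circ$ the images $\sharp_{\bar\Lambda}(D^\circ)_x$ and $\lsharp(D^\circ)_x$ are the same linear subspace of $\T_x M$ (scaling a subspace by the nonzero factor $1/a(x)$ leaves it unchanged). Hence the Jacobi orthogonal complements agree, $\T N^{\perp_{\bar\Lambda}} = \T N^{\perp_\Lambda}$, for every submanifold $N$. Since the characteristic distribution of a contact manifold is all of $\T M$ for both $\eta$ and $\bar\eta$, the defining conditions $\T N^{\perp_\Lambda} \subseteq \T N$ (coisotropic) and $\T N^{\perp_\Lambda} = \T N \cap C = \T N$ (Legendrian) are insensitive to the conformal change, which is the claim.

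I do not expect a genuine obstacle: the whole proposition is forced by the bijection of Proposition~\ref{prop:hamiltonian_bijection} and linearity. The only points demanding care are the bookkeeping of scalings and signs—specifically trusting the cancellation of the $\lsharp(\dd a)$ and $\Reeb$ terms in (iii) and of the $\Reeb(a)$ terms in (iv). The conceptual crux, which trivializes (i) and (ii), is the recognition that infinitesimal contactomorphisms are intrinsic to the contact distribution and therefore shared by $\eta$ and $\bar\eta$.
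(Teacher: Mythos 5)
Your proof is correct; I checked the two cancellations you flagged as the delicate points and they do go through. Expanding $X_{f/a}-fX_{1/a}$ via $X_g=\lsharp(\dd g)-g\Reeb$ and $\dd(f/a)=\tfrac1a\dd f-\tfrac{f}{a^2}\dd a$ indeed leaves exactly $\tfrac1a\lsharp(\dd f)$, and in (iv) both sides reduce to $\tfrac1a\Lambda(\dd f,\dd g)-\tfrac{f}{a^2}\Lambda(\dd a,\dd g)-\tfrac{g}{a^2}\Lambda(\dd f,\dd a)-\tfrac{f}{a}\Reeb(g)+\tfrac{g}{a}\Reeb(f)$. For comparison: the paper does not prove this proposition at all — it states it as a review of known facts, deferring to the cited reference of Dazord, Lichnerowicz and Marle — so there is no in-paper argument to measure yours against. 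Your route is a clean, self-contained derivation entirely internal to the paper's own toolkit: the observation that infinitesimal contactomorphisms are intrinsic to the distribution $\ker\eta=\ker\bar\eta$ makes (i) and (ii) immediate consequences of the bijection $X\mapsto-\eta(X)$ of Proposition~\ref{prop:hamiltonian_bijection}, and (iii), (iv) and the invariance of coisotropic/Legendrian submanifolds then follow by tensoriality of $\lsharp$ and direct computation. This is arguably preferable to the standard treatment (which manipulates $\flat$ and $\flat^{-1}$ under the rescaling directly), since it isolates the one conceptual input and reduces the rest to bookkeeping.
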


\begin{proposition}\label{prop:coisotropic}
    Let $(M, \JacLambda, E)$ be a Jacobi manifold. A submanifold $N \hookrightarrow M$ is coisotropic if and only if for each point $x\in N$ there is a neighbourhood $U$ of $x$ in $M$ such that all functions $f,g:M \to \RR$ that are constant on $N\cap U$ satisfy
    \begin{equation}
        \restr{\JacLambda(\dd f, \dd g)}{N\cap U} = 0\, .
    \end{equation}
\end{proposition}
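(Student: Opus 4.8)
The plan is to reduce the statement to a pointwise, purely linear-algebraic criterion for coisotropy, and then translate that criterion into a condition on functions that are constant on $N$.

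First I would unwind the definition of coisotropic. By definition, $N$ is coisotropic exactly when $\T N^{\perp_\Lambda} = \lsharp(\T N^\circ) \subseteq \T N$. Fix $x \in N$. Since $\T_x N$ is a finite-dimensional subspace, $(\T_x N)^{\circ\circ} = \T_x N$, so a vector $v$ lies in $\T_x N$ precisely when $\beta(v) = 0$ for every $\beta \in \T_x N^\circ$. Applying this to $v = \lsharp(\alpha)$ with $\alpha \in \T_x N^\circ$, and using $\beta(\lsharp(\alpha)) = \beta(\Lambda(\alpha, \cdot)) = \Lambda(\alpha, \beta)$, I obtain that $N$ is coisotropic at $x$ if and only if $\Lambda_x(\alpha, \beta) = 0$ for all $\alpha, \beta \in \T_x N^\circ$. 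Thus coisotropy is equivalent to the vanishing of $\Lambda$ on the annihilator $\T_x N^\circ$ at every $x \in N$. Note that $E$ never enters, since $\perp_\Lambda$ involves only $\Lambda$.

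Second, I would establish a dictionary between $\T_x N^\circ$ and the differentials of locally $N$-constant functions. On the one hand, if $f$ is constant on $N \cap U$, then for $x \in N \cap U$ the differential $\dd f_x$ annihilates $\T_x N$, i.e.\ $\dd f_x \in \T_x N^\circ$. On the other hand, every $\alpha \in \T_x N^\circ$ is realized as $\dd f_x$ for some $f \in \Cinfty(M)$ constant on $N$ near $x$: pick submanifold-adapted coordinates $(x^1, \ldots, x^m)$ centered at $x$ with $m = \dim M$, $k = \dim N$, in which $N \cap U = \set{x^{k+1} = \cdots = x^m = 0}$, so that $\T_x N^\circ = \gen{\dd x^{k+1}, \ldots, \dd x^m}$; then $g = \sum_{j>k} c_j x^j$ vanishes identically on $N \cap U$, and multiplying by a cutoff $\rho$ equal to $1$ near $x$ and supported in the chart yields a global $f = \rho\, g \in \Cinfty(M)$ that still vanishes on $N$ (hence is constant there) with $\dd f_x = \dd g_x = \alpha$.

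Finally I would combine the two. For the ``only if'' direction, assume $N$ coisotropic and take $f, g$ constant on $N \cap U$; the dictionary gives $\dd f_x, \dd g_x \in \T_x N^\circ$ for each $x \in N \cap U$, whence $\Lambda(\dd f, \dd g)(x) = \Lambda_x(\dd f_x, \dd g_x) = 0$ by the pointwise criterion. For the ``if'' direction, assume the functional condition; fix $x \in N$ and arbitrary $\alpha, \beta \in \T_x N^\circ$, realize them as $\dd f_x, \dd g_x$ with $f, g$ constant on $N \cap U$, and conclude $\Lambda_x(\alpha, \beta) = \Lambda(\dd f, \dd g)(x) = 0$; as $\alpha, \beta$ and $x$ are arbitrary, the pointwise criterion yields coisotropy. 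The main obstacle is the realization half of the dictionary: producing, for each covector in the annihilator, a genuinely global smooth function that is constant on a whole neighbourhood $N \cap U$ (not merely one with the correct differential at the single point $x$). This is exactly where the adapted coordinates together with the cutoff extension are needed, and where one must keep track of the neighbourhood $U$.
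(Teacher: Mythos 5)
Your proof is correct and follows essentially the same route as the paper: both reduce coisotropy to the pointwise vanishing of $\Lambda$ on $(\T_x N)^\circ$ and identify that annihilator with differentials of functions constant on $N$ near $x$ via adapted local defining functions. Your cutoff step to make the realizing functions genuinely global (the statement quantifies over $f,g\colon M\to\RR$) is a small extra precaution the paper's proof glosses over, but it is not a different argument.
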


\begin{proof}
    Let $x\in N$ and let $U$ be a neighbourhood of $x$ in $M$ such that $N\cap U$ is a level set of functions $f_i\colon U\to \R,\ i=1,\ldots, k$. Then, $\T_x N = \ker \{\dd_x f_i\}_i$, in other words, $(\T_x N)^\circ = \langle \dd_x f_i \rangle$, and thus $(\T_x N)^{\perp_\JacLambda} = \langle \lsharp \dd_x f_i \rangle$. Hence, $(\T_x N)^{\perp_\JacLambda}\subseteq \T_x N$ if and only if
    \begin{equation}
        0 = \dd_x f_j (\lsharp \dd_x f_i) = \JacLambda(\dd_x f_i, \dd_x f_j)\, ,
    \end{equation}
    for every $i, j\in \{1, \ldots, k\}$.
    Since $x$ is arbitrary, the result follows.
\end{proof}

Let $S\colon \R^{n+1}\setminus \{0\} \to \Sp^n$ be the canonical projection on the sphere, that is, $S\colon x \mapsto \tfrac{1}{\norm{x}} x$, where $\norm{\cdot}$ denotes the Euclidean norm. Equivalently, $S$ can be understood as the map that assigns to each point $x\in\R^{n+1}\setminus \{0\}$ the equivalence class of $x$ under homothety, i.e., $x\sim y$ if there exists an $r\in \RR_+$ such that $rx=y$.

\begin{definition}
    Let $(M, \eta)$ be a contact manifold with Jacobi bracket $\{\cdot, \cdot\}$.
    A map $\hat F\colon M \to \Sp^n$ is called \emph{dissipative} at a regular value $\ray{\lambda}$ if there exists a map $F=(f_\alpha)_\alpha\colon M\to \R^{n+1}$ such that $\hat F=S\circ F$ and the functions $f_\alpha$ are in involution, namely, $\{f_\alpha, f_\beta\}=0$ for every $\alpha, \beta \in \{0, \ldots, n\}$.
\end{definition}

\begin{theorem}\label{thm:characterizations_coisotropic}
    Let $(M, \eta)$ be a contact manifold, let $\lambda \in \R^{n+1}\setminus \{0\}$. Let $F=(f_0, \dotsc, f_n)\colon M \to \R^{n+1}$ be a map such that $\operatorname{rank} \T F\geq n$ at $M_{\ray{\lambda}}=F^{-1}(\ray{\lambda})$. Then, the following statements are equivalent:
    \begin{enumerate}
        \item \label{item:coisotropic} $M_{\ray{\lambda}}$ is coisotropic.
        \item \label{item:brackets} $f_\alpha\jacBr{f_\beta,f_\gamma}_{\eta} + f_\gamma\jacBr{f_\alpha,f_\beta}_{\eta} + f_\beta\jacBr{f_\gamma,f_\alpha}_{\eta} = 0$ on $M_{\ray{\lambda}}$, for each $\alpha, \beta, \gamma \in \{0, \dotsc, n\}$.
        \item \label{item:involution} The functions $f_{\alpha}$ are in involution on $M_{\ray{\lambda}}$ namely, $\restr{\jacBr{f_\alpha,f_\beta}_{\eta}}{M_{\ray{\lambda}}} = 0$ for all $\alpha, \beta \in \{0, \dotsc, n\}$.
        \item \label{item:sphere} $\hat F=S\circ F$ is dissipative at $\ray{\lambda}$.
    \end{enumerate}
\end{theorem}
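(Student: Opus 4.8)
The plan is to prove the equivalences as the cycle $\ref{item:coisotropic}\Rightarrow\ref{item:brackets}\Rightarrow\ref{item:sphere}\Rightarrow\ref{item:involution}\Rightarrow\ref{item:coisotropic}$, treating the two algebraic steps $\ref{item:coisotropic}\Leftrightarrow\ref{item:brackets}$ carefully and reducing the conformal links to short manipulations plus one genuine existence statement. Throughout I abbreviate $c_{\alpha\beta}=\jacBr{f_\alpha,f_\beta}_\eta$, $T_{\alpha\beta\gamma}=f_\alpha c_{\beta\gamma}+f_\beta c_{\gamma\alpha}+f_\gamma c_{\alpha\beta}$ (which is totally antisymmetric in its three indices), and $\Omega_{\alpha\beta}=f_\alpha\dd f_\beta-f_\beta\dd f_\alpha$. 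As in the proof of Theorem~\ref{thm:main_theorem}, the hypothesis $\operatorname{rank}\T F\geq n$ guarantees that $\T M_{\ray{\Lambda}}=\gen{X_{f_\alpha}}$ and that its annihilator is $\gen{\Omega_{\alpha\beta}}$, so that, by the definition of the Jacobi complement and Proposition~\ref{prop:coisotropic}, coisotropy of $M_{\ray{\Lambda}}$ is equivalent to $\Lambda(\Omega_{\alpha\beta},\Omega_{\gamma\delta})=0$ on $M_{\ray{\Lambda}}$ for all indices.

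For $\ref{item:coisotropic}\Leftrightarrow\ref{item:brackets}$ I would first record, exactly as in the computation of $\Omega_{\alpha\beta}(Y_{\gamma\delta})$ in the proof of Theorem~\ref{thm:main_theorem}, the purely algebraic identity
\begin{equation}
  \Lambda(\Omega_{\alpha\beta},\Omega_{\gamma\delta}) = f_\gamma T_{\alpha\beta\delta}-f_\delta T_{\alpha\beta\gamma}\, ,
\end{equation}
obtained by expanding and using the antisymmetry of $c$. Thus \ref{item:brackets} (i.e.\ $T\equiv0$ on $M_{\ray{\Lambda}}$) immediately yields \ref{item:coisotropic}. Conversely, if all $\Lambda(\Omega_{\alpha\beta},\Omega_{\gamma\delta})$ vanish, then $f_\gamma T_{\alpha\beta\delta}=f_\delta T_{\alpha\beta\gamma}$, so the vector $(T_{\alpha\beta\delta})_\delta$ is pointwise proportional to $(f_\delta)_\delta$. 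Since $\Lambda\neq0$ forces some $f_{\alpha_0}\neq0$ on $M_{\ray{\Lambda}}$, I may write $T_{\alpha\beta\delta}=\lambda_{\alpha\beta}f_\delta$ with $\lambda_{\alpha\beta}=T_{\alpha\beta\alpha_0}/f_{\alpha_0}$; feeding this into the antisymmetry $T_{\alpha\beta\delta}=-T_{\alpha\delta\beta}$ and setting $\delta=\alpha_0$ gives $\lambda_{\alpha\beta}=-(f_\beta/f_{\alpha_0})\lambda_{\alpha\alpha_0}$, while $\lambda_{\alpha\alpha_0}=T_{\alpha\alpha_0\alpha_0}/f_{\alpha_0}=0$ by antisymmetry. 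Hence $\lambda\equiv0$ and $T\equiv0$, which is \ref{item:brackets}.

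The two conformal links are short. For $\ref{item:sphere}\Rightarrow\ref{item:involution}$: a dissipative lift of $\hat F$ is a positive function $\rho$ with $g_\alpha=\rho f_\alpha$ in involution, and taking $\bar\eta=\eta/\rho$ gives, by Proposition~\ref{prop:conformal_equivalence}(iv), $\jacBr{f_\alpha,f_\beta}_{\bar\eta}=\rho^{-1}\jacBr{\rho f_\alpha,\rho f_\beta}_\eta=0$. For $\ref{item:involution}\Rightarrow\ref{item:coisotropic}$: if the $f_\alpha$ are in involution for some conformally equivalent $\bar\eta$, then $\ref{item:coisotropic}\Leftrightarrow\ref{item:brackets}$ applied to $\bar\eta$ shows $M_{\ray{\Lambda}}$ is coisotropic for $\bar\eta$, and coisotropy is conformally invariant by Proposition~\ref{prop:conformal_equivalence}. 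Passing between a conformal factor $\bar\eta=a\eta$ and the positive factor $\rho=1/a$ of a dissipative lift only costs a sign, which is harmless on the connected neighbourhood of $M_{\ray{\Lambda}}$: there the nowhere-vanishing $a$ has constant sign and may be rescaled by the constant $-1$, which merely rescales the bracket and preserves its vanishing.

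The real content is $\ref{item:brackets}\Rightarrow\ref{item:sphere}$: manufacturing the positive rescaling $\rho$. A direct computation with the weak Leibniz rule shows $\jacBr{\rho f_\alpha,\rho f_\beta}_\eta=\rho^2 c_{\alpha\beta}+\rho\big(f_\beta\Lambda(\dd f_\alpha,\dd\rho)-f_\alpha\Lambda(\dd f_\beta,\dd\rho)\big)$, and after substituting $\lsharp\dd f_\alpha=X_{f_\alpha}+f_\alpha\Reeb$ the Reeb contributions cancel, leaving the requirement $f_\beta X_{f_\alpha}(\rho)-f_\alpha X_{f_\beta}(\rho)=-\rho\,c_{\alpha\beta}$. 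Using \ref{item:brackets} in the form $c_{\alpha\beta}=f_\alpha v_\beta-f_\beta v_\alpha$ with $v_\alpha=c_{\alpha_0\alpha}/f_{\alpha_0}$, this reduces to solving the overdetermined system $X_{f_\alpha}(\log\rho)=v_\alpha$, modulo the gauge freedom $v_\alpha\mapsto v_\alpha+\mu f_\alpha$ for a common function $\mu$. The hard part will be the existence of a positive solution: I expect to obtain it from a Frobenius-type integrability argument on the connected neighbourhood of $M_{\ray{\Lambda}}$, where condition \ref{item:brackets} is precisely what supplies the compatibility of these equations (recall $[X_{f_\alpha},X_{f_\beta}]=-X_{c_{\alpha\beta}}$ by Proposition~\ref{prop:hamiltonian_bijection}), and then to transport the construction through the symplectization of Section~\ref{sec:symplectization}, where $\rho$ becomes the degree-$0$ factor turning the homogeneous functions $f_\alpha^\Sigma$ into a Poisson-commuting family. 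This existence step, rather than any of the bracket identities, is where the genuine difficulty lies.
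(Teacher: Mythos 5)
Your equivalence \ref{item:coisotropic}$\Leftrightarrow$\ref{item:brackets} is correct and is a genuinely different (and clean) route from the paper's: you work with the annihilator $\gen{\Omega_{\alpha\beta}}$ and the identity $\Lambda(\Omega_{\alpha\beta},\Omega_{\gamma\delta})=f_\gamma T_{\alpha\beta\delta}-f_\delta T_{\alpha\beta\gamma}$ (which I have checked; the Reeb terms do cancel), whereas the paper divides by a nonvanishing $f_0$, applies Proposition~\ref{prop:coisotropic} to the functions $g_i=f_i/f_0$, and clears denominators. The links \ref{item:involution}$\Leftrightarrow$\ref{item:sphere} and \ref{item:involution}$\Rightarrow$\ref{item:coisotropic} are also fine and essentially as in the paper.

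The problem is the step \ref{item:brackets}$\Rightarrow$\ref{item:sphere}: you reduce it to solving the overdetermined system $X_{f_\alpha}(\log\rho)=v_\alpha$ (modulo gauge) and then explicitly defer the existence of a positive solution to a ``Frobenius-type integrability argument'' that you do not carry out. That existence statement is the entire content of the implication, so as written the cycle is not closed: you have proved \ref{item:coisotropic}$\Leftrightarrow$\ref{item:brackets} and \ref{item:involution}$\Leftrightarrow$\ref{item:sphere}$\Rightarrow$\ref{item:coisotropic}, but nothing gets you from \ref{item:brackets} back into the second pair. Moreover, the Frobenius route is harder than it looks: the $v_\alpha$ are only determined up to $v_\alpha\mapsto v_\alpha+\mu f_\alpha$, and the compatibility conditions $[X_{f_\alpha},X_{f_\beta}](\log\rho)=X_{f_\alpha}(v_\beta)-X_{f_\beta}(v_\alpha)$ involve $[X_{f_\alpha},X_{f_\beta}]=-X_{c_{\alpha\beta}}$ with $c_{\alpha\beta}$ not assumed to vanish, so \ref{item:brackets} alone does not obviously supply them. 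The paper avoids integration entirely: by the lemma preceding the theorem, a coisotropic $M_{\ray{\Lambda}}$ admits a local Hamiltonian vector field $X_h$ tangent to it with $h\neq 0$; then $X_h$ is the Reeb vector field of $\eta_0=-\eta/h$, so the functions $g_i=f_i/f_0$, being constant along $M_{\ray{\Lambda}}$ and satisfying $\Lambda(\dd g_i,\dd g_j)=0$ there, obey $\jacBr{g_i,g_j}_{\eta_0}=0$, and the rescaling $\bar\eta=f_0\,\eta_0$ yields $\jacBr{f_i,f_j}_{\bar\eta}=0$ with the explicit conformal factor $-f_0/h$. You should replace your unfinished existence argument by this construction (i.e.\ prove \ref{item:coisotropic}$\Rightarrow$\ref{item:involution} directly), after which \ref{item:sphere} follows from your own \ref{item:involution}$\Leftrightarrow$\ref{item:sphere} link.
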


\begin{proof}
  At least one function $f_\alpha$ is non-zero in a neighbourhood of any point of $M_{\ray{\lambda}}$. Thus, without loss of generality, we can assume that $f_0$ does not vanish and define $g_i = f_i/f_0$ for $i \in \set{1,\ldots,n}$. We notice that working locally is enough and we can extend our result using a partition of unity.

  Let us start by showing that \ref{item:coisotropic} and \ref{item:brackets} are equivalent.
    Since $M_{\ray{\lambda}}$ is a level set of the $g_i$, one can apply Proposition~\ref{prop:coisotropic}, obtaining
    \begin{equation}
        0 = \JacLambda(\dd g_i, \dd g_j) =  \JacLambda \left(\dd \left(\frac{f_i}{f_0}\right), \dd \left(\frac{f_j}{f_0}\right)\right)\, ,
    \end{equation}
    on $M_{\ray{\lambda}}$.
    Hence, 
    \begin{equation}
    \begin{aligned}
        0 & = f_0^3\JacLambda \left(\dd \left(\frac{f_i}{f_0}\right), \dd \left(\frac{f_j}{f_0}\right) \right)
        = f_0\JacLambda(\dd f_i, \dd f_j) - f_i \JacLambda(\dd f_0, \dd f_j) -  f_j \JacLambda(\dd f_i, \dd f_0) 
        \\ &
        = f_0 \jacBr{f_i, f_j} -f_i  \jacBr{f_0, f_j} - f_j \jacBr{f_i, f_0}
        \, ,
    \end{aligned}
    \end{equation}
    on $M_{\ray{\lambda}}$.
    Since the choice of $f_0$ is arbitrary, we obtain that $M_{\ray{\lambda}}$ is coisotropic if and only if
    \begin{equation*}
        f_\alpha\jacBr{f_\beta,f_\gamma} + f_\gamma\jacBr{f_\alpha,f_\beta} + f_\beta\jacBr{f_\gamma,f_\alpha} = 0
    \end{equation*}
    holds on $M_{\ray{\lambda}}$.

    The next step is to prove that \ref{item:brackets} implies \ref{item:involution}. 
    By means of the symplectization, we can show that the Hamiltonian vector fields $X_{f_\alpha}$ are tangent to $M_{\ray{\lambda}}$ (see Subsection~\ref{subsec:proof}). In particular, $M_{\ray{\lambda}}$ is a level set of the functions $g_i$ and thus $X_{f_0}(g_i)=0$. Since $X_{f_0}$ is the Reeb vector field of $\eta_0 = -\eta/f_0$, we have that
    \begin{equation*}
        \jacBr{g_i, g_j}_{\eta_0} = \JacLambda_0(\dd g_i, \dd  g_j) - g_i X_{f_0}(g_j) + g_j X_{f_0}(g_i) = \JacLambda_0(\dd g_i, \dd  g_j)\, ,
    \end{equation*}
    where $(\JacLambda_0, -X_h)$ is the Jacobi structure defined by $\eta_0$. By Proposition~\ref{prop:coisotropic}, the right-hand side vanishes on $M_{\ray{\lambda}}=F^{-1}(\ray{\lambda})$.
    This in turn implies that
    \begin{equation*}
        \jacBr{f_i, f_j}_{\eta} = f_0 \jacBr{f_i/f_0, f_j/f_0}_{\eta_0} =  f_0\jacBr{g_i, g_j}_{\eta_0}
    \end{equation*}
    vanishes on $M_{\ray{\lambda}}$.
    In addition, 
    \begin{equation*}
        \jacBr{f_0, f_j}_{\eta} = f_0 \jacBr{1, f_j/f_0}_{\eta_0} =  f_0\jacBr{1, g_j}_{\eta_0} = - f_0 X_{f_0} (g_j) = 0\, ,
    \end{equation*}
    which shows that
      \begin{equation*}
        \restr{\jacBr{f_\alpha, f_\beta}_{\eta}}{M_{\ray{\lambda}}} = 0
    \end{equation*}
    for all $\alpha, \beta \in \{0, \ldots, n\}$. Obviously, \ref{item:involution} implies \ref{item:brackets}.


    Last of all, we prove the equivalence between \ref{item:involution} and \ref{item:sphere}. Let $\bar{\eta} = a \eta$. Without loss of generality, we can assume that $a > 0$ (changing the sign of the contact form would only change the sign of the brackets, but the functions would still be in involution).

    One has that $\hat{F} = S \circ  \restr{F'}{M\setminus (F')^{-1}(0)}$, where $F' = (f_0', \ldots, f_n')\colon M \to \RR^{n+1}$, if and only if $\ray{F(x)} = \ray{F'(x)}$ for all $x \in M$. Equivalently, $f_\alpha = a f'_\alpha$ for some positive function $a\in \Cinfty(M)$. Thus, $\hat{F}$ is dissipative at $\ray{\lambda}$ if and only if
    \begin{equation}
        \jacBr{f'_\alpha, f'_\beta}_{\eta} = \jacBr{a f_\alpha, a f_\beta}_{\eta} = a \jacBr{f_\alpha, f_\beta}_{\bar{\eta}}\, ,
    \end{equation}
    on $M_{\ray{\lambda}}$.
    Hence, $\hat{F}$ is dissipative if and only if there exist some contact form conformally equivalent to $\eta$ that makes the functions $f_\alpha$ be in involution. 
\end{proof}

\begin{remark}\label{rem:characterization_integrable_systems}
Let $(M, \eta)$ be a contact manifold.
Using Theorem~\ref{thm:characterizations_coisotropic}, we can give the following characterizations of a completely integrable contact system.
    \begin{enumerate}
        \item A triple $(M,\eta, F)$, where $F=(f_\alpha): M \to \RR^{n+1}$ such that $\jacBr{f_\alpha,f_\beta} = 0$ and $\T F$ has rank at least $n$ (i.e.~a completely integrable system according to Definition~\ref{def:integrable_systems}).

        \item A triple $(M,\eta, F)$, where $F=(f_\alpha): M \to \RR^{n+1}$ such that $f_\alpha\jacBr{f_\beta,f_\gamma} + f_\gamma\jacBr{f_\alpha,f_\beta} + f_\beta\jacBr{f_\gamma,f_\alpha} = 0$ at least $n$.

        \item A triple $(M, \eta, \hat{F})$, where $\hat{F}: M \to \Sp^n$ such that $\hat{F}$ is a submersion and has coisotropic fibers.
    \end{enumerate}

    It is also worth mentioning that in a previous paper~\cite{deLeon2023}, motivated by the study of complete solutions of the Hamilton--Jacobi problem, we had defined an integrable system as a contact Hamiltonian system $(M,\eta, h)$ with a foliation consisting of $(n + 1)$-dimensional coisotropic leaves invariant under the flow of the Hamiltonian vector field $X_h$. In future works, we would like to explore the relation between action-angle coordinates and complete solutions to the Hamilton--Jacobi problem.
\end{remark}

\section{Other definitions of contact integrable systems}\label{sec:other_definitions}

Several definitions of contact integrable systems can be found in the literature. Nevertheless, to the best of our knowledge, there has been no previous investigation into the concept of integrability for contact Hamiltonian systems without imposing restrictive assumptions on the dynamics. As it is well-known, in symplectic and Poisson manifolds, a collection of functions $f_1, \ldots, f_k$ are in involution if and only if their Hamiltonian vector fields $X_{f_i}$ are tangent to the level set of those functions. However, this is not the case in contact manifolds unless one assumes that every function is preserved by the flow of the Reeb vector field. This leads to dynamics that cannot describe dissipative phenomena. Instead of considering level sets of the functions in involution, we consider preimages of rays $M_{\ray{\lambda}}$, so that the contact Hamiltonian vector fields $X_{f_\alpha}$ are tangent to $M_{\ray{\lambda}}$ without needing to assume that $\Reeb (f_\alpha)=0$. Furthermore, in previous works concerning integrability of contact systems the submanifolds to which $X_{f_\alpha}$ are tangent are assumed to be compact. In several examples of contact Hamiltonian systems, e.g.~the damped harmonic oscillator, this is not the case.

\begin{itemize}
    \item Boyer~\cite{Boyer2011} introduced a concept of completely integrable system for the so-called good Hamiltonians, that is, the Hamiltonian function is preserved along the flow of the Reeb vector field. More specifically, the author says that a contact Hamiltonian system $(M, \eta, h)$ is completely integrable if there exists $n+1$ independent functions $h, f_1, \ldots, f_n$ in involution such that $X_h(h)=0$ and $X_h(f_i)=0,\, i=1, \ldots, n$. This definition implies that $\Reeb(h)=0$. This condition is overly restrictive from a dynamical systems perspective because it excludes systems with energy dissipation.
    
    \item Jovanovi\'{c}~\cite{Jovanovic2012} (see also~\cite{Jovanovic2015}) considered noncommutative integrability for the flows of contact Hamiltonian vector fields. However, the functions considered are assumed to be invariant under the Reeb flow. {In a recent preprint \cite{Jovanovic2025}, B.~Jovanovi\'{c} studies the non-commutative integrability of contact systems on a contact manifold $(M, C)$ using the Jacobi structure on the space of sections of a contact line bundle $L$. In this new work, the author no longer assumes the contact Hamiltonian to be Reeb-invariant.}
    
    \item Miranda~\cite{Miranda2014} (see also~\cite{MirandaGalceran2005}) obtained action-angle coordinates for the dynamics generated by the Reeb vector field on a contact manifold. It is worth mentioning that the Hamiltonian vector field of a nonvanishing function $f$ with respect to $\eta$ is the Reeb vector field of a conformal contact form $\bar \eta = -\eta/f$. Nevertheless, the author assumes that the Reeb vector field is an infinitesimal generator of a $\Sp^1$-action, which restricts the dynamics one can consider to periodic orbits.
    
    \item Khesin and Tabachnikov~\cite{Khesin2010} call a foliation co-Legendrian when it is transverse to $\cdist$ and $T\mathcal{F}\cap \cdist$ is integrable. Additionally, they define an integrable system as a particular case of a co-Legendrian foliation with some extra regularity conditions.
    In a previous paper, we showed that if $N$ is a $(n+1)$-dimensional co-Legendrian submanifold, then it is also a coisotropic submanifold (see Proposition~5.9 from \cite{deLeon2023}).
    


    \item Banyaga and Molino~\cite{Banyaga1993} studied completely integrable contact forms of toric type. More specifically, they considered contact forms on $M$ for which the space of first integrals of the Reeb field determines a singular fibration $\pi\colon M \to W$ defined locally by the action of a torus of dimension $n+1$ of contact transformations, where $W$ is then the space of orbits of this action. A classification of compact connected toric contact manifolds was done by Lerman~\cite{Lerman2003}.
    
    \item Geiges, Hedicke and Sa\u{g}lam~\cite{Geiges2023} recently introduced a notion of Bott integrability for Reeb flows on contact 3-manifolds. They call a Reeb flow Bott integrable if there exists a Morse--Bott function $f\colon M \to \R$ which is invariant under the Reeb flow. They proof that a closed, oriented 3-manifold admits a Bott-integrable Reeb flow if and only if it is a graph manifold.
\end{itemize}


\section{Example}\label{sec:example}


Consider the contact manifold $(M, \eta)$, with $M=\RR^3\setminus\{0\}$ and $\eta= \dd z - p \dd q$ in canonical coordinates $(q, p, z)$. The functions 
\begin{equation}
    h = p\, , \quad f = z
\end{equation}
are in involution. Indeed, their Hamiltonian vector fields are given by
\begin{equation}
    X_h = \frac{\partial}{\partial q}\, , \quad  
    X_f = -p \frac{\partial}{\partial p} - z  \frac{\partial}{\partial z}\, .
\end{equation}
Thus, 
\begin{equation}
    \{h, f\} = X_h(f) + f \Reeb(h) = 0\, .
\end{equation}
Additionally, $h$ and $f$ are functionally independent, that is, $\dd h \wedge \dd f  = \dd p \wedge \dd z \neq 0$. Hence, $(M, \eta, F)$ is a completely integrable contact system, with $F = (h, f)$. Since $F\colon (q, p, z) \mapsto (p, z)$ is the canonical projection, $F\colon \RR^3 \to \RR^2$ is a trivial bundle and the hypotheses of Theorem~\ref{thm:main_theorem} are satisfied. 

Consider the symplectization $\Sigma \colon M^\Sigma = M \times \RR_+\to M$ with $\Sigma$ the canonical projection. Let $(q, p, z, r)$ denote the bundle coordinates of $M^\Sigma$. In these coordinates, the conformal factor reads $\sigma =r$. Therefore, $\theta = r \dd z- rp \dd q$ is the symplectic potential on $M^\Sigma$, and the symplectizations of $h$ and $f$ are $h^\Sigma = -rp$ and $f^\Sigma = -rz$. Their Hamiltonian vector fields are
\begin{equation}
    X_{h^\Sigma} = \frac{\partial}{\partial q}\, , \quad  
    X_{f^\Sigma} = -p \frac{\partial}{\partial p} - z  \frac{\partial}{\partial z} + r \frac{\partial}{\partial r}\, .
\end{equation}
Consider a section $\chi\colon \RR^2 \to M^\Sigma$ of $F^\Sigma=(h^\Sigma, f^\Sigma)$ such that $\chi^\ast \theta = 0$. Suppose that $\chi$ reads
\begin{equation}
    \chi(\lambda_1, \lambda_2) = \big( \chi_q(\lambda_1, \lambda_2),  \chi_p(\lambda_1, \lambda_2),  \chi_z(\lambda_1, \lambda_2),  \chi_r(\lambda_1, \lambda_2)\big)\, .
\end{equation}
The condition of $\chi$ being a section yields
\begin{equation}
   \lambda_1 \chi_z = \lambda_2 \chi_p\, , \quad \chi_r \chi_p = \lambda_1\, ,
\end{equation}
Moreover, the condition $\chi^\ast \theta = 0$ implies that
\begin{equation}
    \dd \xi_q = \frac{\lambda_2}{\lambda_1} \dd \left(\log \left( \frac{\lambda_2}{\lambda_1} \chi_p\right)\right)\, .
\end{equation}
For instance, one can choose $\chi_r = \lambda_2/\lambda_1$ so that
\begin{equation}
    \chi(\lambda_1, \lambda_2) = \left(0, \frac{\lambda_1}{\lambda_2}, 1, \lambda_2\right)\, ,
\end{equation}
in the points where $\lambda_2\neq 0$. The Lie group action $\Phi\colon \RR^2 \times M^\Sigma \to M^\Sigma$ defined by the flows of $X_{h^\Sigma}$ and $X_{f^\Sigma}$ is given by
\begin{equation}
    \Phi(t, s; q, p, z, r) = \left(q+t, p e^{-s}, z e^{-s}, r e^{s}\right)\, ,
\end{equation}
whose isotropy subgroup is the trivial one. The angle coordinates $(y^0_\Sigma, y^1_\Sigma)$ of a point $x\in M^\Sigma$ are determined by
\begin{equation}\label{eq:flow_example}
    \Phi\left(y^0_\Sigma, y^1_\Sigma, \chi(F(x))\right) = x\, .
\end{equation}
If the canonical coordinates of $x$ are $(q, p, z, r)$, then $\chi\circ F(x) = (0, p/z, 1, rz)$ and equation~\eqref{eq:flow_example} reads
\begin{equation}
    \left(y^0_\Sigma, \frac{p}{z} e^{-y^1_\Sigma}, e^{-y^1_\Sigma}, rz e^{y^1_\Sigma}\right) = (q, p, z, r)\, .
\end{equation}
As a consequence,
\begin{equation}
    y^0_\Sigma = q\, , \quad y^1_\Sigma = - \log z\, .
\end{equation}
Since the isotropy subgroup is trivial, the action coordinates coincide with the functions in involution, namely, 
\begin{equation}
    A_0^\Sigma = h^\Sigma = -rp\, , \quad A_1^\Sigma = f^\Sigma = -rz\, .
\end{equation}
Projecting to $M$ we obtain the functions
\begin{equation}
    y^0=q\, , \quad y^1 = - \log z\, , \quad A_0 = h = p\, , \quad A_1 = f = z\, .
\end{equation}
Recall that $\chi$ is defined for points where $\lambda_2 \neq 0$, i.e., the projection by $F^\Sigma$ of points with $z\neq 0$. The action coordinate is 
\begin{equation}
    \tilde{A} = - \frac{A_0}{A_1} = -\frac{p}{z}
\end{equation}
In the coordinates $(y^0, y^1, \tilde{A})$ the Hamiltonian vector fields read
\begin{equation}
    X_h = \frac{\partial}{\partial y^0}\, , \quad X_f = \frac{\partial}{\partial y^1}\, ,
\end{equation}
and there is a conformal contact form given by
\begin{equation}
    \tilde{\eta} = - \frac{1}{A_1} \eta = \dd y^1 - \tilde{A}  \dd y^0\, .
\end{equation}
In these coordinates, contact Hamilton equations are written as
\begin{equation}
    \dot y^0 = 1\, , \quad \dot y^1 = 0\, , \quad \dot{\tilde{A}} = 0\, .
\end{equation}
Integrating them yields the curves
\begin{equation}
    c(t) = \left(y^0(t), y^1(t), \tilde{A}(t) \right) = \left(y^0(0)+t, y^1(0), \tilde{A}(0) \right)\, .
\end{equation}
{ In the coordinates $(y^0, y^1, \tilde{A})$, the Reeb vector field $\Reeb$ of $\eta$ reads
$$\Reeb = - e^{y^1} \left(\frac{\partial}{\partial y^1}+\tilde{A}\frac{\partial}{\partial \tilde{A}}\right)\, ,$$
while the Reeb vector field of $\tilde{\eta}$ is 
$$\tilde{\Reeb} = \frac{\partial}{\partial y^1}\, .$$
}
Similarly, 
\begin{equation}
    \chi(\lambda_1, \lambda_2) = \left( \frac{\lambda_2}{\lambda_1} , 1, \frac{\lambda_2}{\lambda_1}, \lambda_1 \right)
\end{equation}
is a section of $F^\Sigma$ in the points where $\lambda_1\neq 0$. Performing analogous computations as above one obtains the action-angle coordinates
\begin{equation}
    \hat{y}^0 = q - \frac{z}{p}\, , \quad \hat{y}^1 = - \log p\, , \quad \hat{A} = - \frac{z}{p}\, ,
\end{equation}
such that
\begin{equation}
    X_h = \frac{\partial}{\partial \hat{y}^0}\, , \quad X_f = \frac{\partial}{\partial \hat{y}^1}\, , \quad \hat{\eta} = - \frac{1}{p} \eta = \dd \hat{y}^0 - \hat{A} \dd \hat{y}^1\, . 
\end{equation}
{ In the coordinates $(\hat y^0, \hat y^1, \hat{A})$, the Reeb vector field $\Reeb$ of $\eta$ reads
$$\Reeb = - e^{\hat y^1} \left(\frac{\partial}{\partial \hat y^0}+\frac{\partial}{\partial \hat{A}}\right)\, .$$
}

\section*{Declaration of interest}
The authors have no competing interests to declare.



\section*{Data Availability}
Data sharing is not applicable to this article as no new data were created or analysed in this study.

\let\emph\oldemph
\printbibliography

@book{Abraham1988,
  title = {Manifolds, {{Tensor Analysis}}, and {{Applications}}},
  author = {Abraham, Ralph and Marsden, Jerrold E. and Ratiu, Tudor},
  editorb = {Marsden, J. E. and Sirovich, L. and John, F.},
  editorbtype = {redactor},
  date = {1988},
  series = {Applied {{Mathematical Sciences}}},
  volume = {75},
  publisher = {{Springer New York}},
  location = {{New York, NY}},
  doi = {10.1007/978-1-4612-1029-0},
  url = {http://link.springer.com/10.1007/978-1-4612-1029-0},
  isbn = {978-1-4612-6990-8 978-1-4612-1029-0},
  langid = {english}
}

@book{Abraham2008,
  title = {Foundations of {{Mechanics}}},
  author = {Abraham, R. and Marsden, J.E.},
  date = {2008},
  series = {{{AMS Chelsea}} Publishing},
  publisher = {{AMS Chelsea Pub./American Mathematical Society}},
  url = {https://books.google.es/books?id=4Y-ownk6ilsC},
  isbn = {978-0-8218-4438-0},
  lccn = {2008005206}
}

@online{A.E2023,
  title = {Particular Integrals and Particular Integrability for (Co)Symplectic and (Co)Contact {{Hamiltonian}} Systems},
  author = {Azuaje, R. and Escobar-Ruiz, A. M.},
  date = {2023-09-29},
  eprint = {2309.17356},
  eprinttype = {arxiv},
  eprintclass = {math-ph},
  pubstate = {preprint},
  keywords = {Mathematical Physics}
}

@book{Arnold1978,
  title = {Mathematical {{Methods}} of {{Classical Mechanics}}},
  author = {Arnold, V. I.},
  date = {1978},
  series = {Graduate {{Texts}} in {{Mathematics}}},
  publisher = {{Springer-Verlag}},
  location = {{New York}},
  doi = {10.1007/978-1-4757-1693-1},
  url = {https://www.springer.com/gp/book/9781475716931},
  isbn = {978-1-4757-1693-1},
  langid = {english}
}

@book{Audin2004,
  title = {Torus {{Actions}} on {{Symplectic Manifolds}}},
  author = {Audin, Michèle},
  date = {2004},
  publisher = {{Birkhäuser Basel}},
  location = {{Basel}},
  doi = {10.1007/978-3-0348-7960-6},
  url = {http://link.springer.com/10.1007/978-3-0348-7960-6},
  isbn = {978-3-0348-9637-5 978-3-0348-7960-6},
  langid = {english}
}

@incollection{Banyaga1993,
  title = {Géométrie Des Formes de Contact Complètement Intégrables de Type Toriques},
  booktitle = {Séminaire {{Gaston Darboux}} de {{Géométrie}} et {{Topologie Différentielle}}, 1991–1992 ({{Montpellier}})},
  author = {Banyaga, A. and Molino, P.},
  date = {1993},
  pages = {1--25},
  publisher = {{Univ. Montpellier II, Montpellier}},
  url = {https://mathscinet.ams.org/mathscinet-getitem?mr=1223155},
  mrnumber = {1223155}
}

@incollection{Banyaga1999,
  title = {The Geometry Surrounding the {{Arnold-Liouville}} Theorem},
  booktitle = {Advances in Geometry},
  author = {Banyaga, Augustin},
  date = {1999},
  series = {Progr. {{Math}}.},
  volume = {172},
  pages = {53--69},
  publisher = {{Birkhäuser Boston, Boston, MA}},
  url = {https://mathscinet.ams.org/mathscinet-getitem?mr=1667675},
  mrnumber = {1667675}
}

@book{Bolsinov2004,
  title = {Integrable {{Hamiltonian}} Systems: Geometry, Topology, Classification},
  shorttitle = {Integrable {{Hamiltonian}} Systems},
  author = {Bolsinov, A. V. and Fomenko, A. T.},
  date = {2004},
  publisher = {{Chapman \& Hall/CRC}},
  location = {{Boca Raton, Fla}},
  isbn = {978-0-415-29805-6},
  langid = {english},
  pagetotal = {730},
  keywords = {Geodesic flows,Geodesics (Mathematics),Hamiltonian systems}
}

@article{Boyer2011,
  title = {Completely {{Integrable Contact Hamiltonian Systems}} and {{Toric Contact Structures}} on {{S2xS3}}},
  author = {Boyer, Charles P.},
  date = {2011-06-15},
  journaltitle = {SIGMA Symmetry Integrability Geom. Methods Appl.},
  issn = {18150659},
  doi = {10.3842/SIGMA.2011.058},
  url = {http://www.emis.de/journals/SIGMA/2011/058/},
  keywords = {Contact Geometry,Differential Geometry,Hamiltonian systems,Integrable Systems,Mathematical Physics,Symmetries,Symplectic Geometry}
}

@article{Bravetti2017a,
  title = {Contact {{Hamiltonian Dynamics}}: {{The Concept}} and {{Its Use}}},
  shorttitle = {Contact {{Hamiltonian Dynamics}}},
  author = {Bravetti, Alessandro},
  date = {2017-10},
  journaltitle = {Entropy},
  volume = {19},
  number = {10},
  pages = {535},
  publisher = {{Multidisciplinary Digital Publishing Institute}},
  issn = {1099-4300},
  doi = {10.3390/e19100535},
  url = {https://www.mdpi.com/1099-4300/19/10/535},
  issue = {10},
  langid = {english},
  keywords = {contact geometry,dissipative systems,statistical mechanics,thermodynamics}
}

@article{Bravetti2019,
  title = {Contact Geometry and Thermodynamics},
  author = {Bravetti, Alessandro},
  date = {2019-02},
  journaltitle = {Int. J. Geom. Methods Mod. Phys.},
  volume = {16},
  pages = {1940003},
  issn = {0219-8878, 1793-6977},
  doi = {10.1142/S0219887819400036},
  url = {https://www.worldscientific.com/doi/abs/10.1142/S0219887819400036},
  issue = {supp01},
  langid = {english},
  keywords = {Contact geometry,contact Hamiltonian systems,Ruppeiner geometry,Sasakian geometry,thermodynamics}
}

@article{Ciaglia2018,
  title = {Contact Manifolds and Dissipation, Classical and Quantum},
  author = {Ciaglia, F. M. and Cruz, H. and Marmo, G.},
  date = {2018-11-01},
  journaltitle = {Ann. Physics},
  volume = {398},
  pages = {159--179},
  issn = {0003-4916},
  doi = {10.1016/j.aop.2018.09.012},
  url = {https://www.sciencedirect.com/science/article/pii/S0003491618302574},
  langid = {english},
  keywords = {Contact manifold,Dissipation,General linear group,GKLS equation,Hamiltonian mechanics,Lagrangian mechanics,Nonlinear Schrödinger equation}
}

@article{Dazord1991,
  title = {Structure Locale Des Variétés de {{Jacobi}}},
  author = {Dazord, Pierre and Lichnerowicz, André and Marle, Charles-Michel},
  date = {1991},
  journaltitle = {J. Math. Pures Appl. (9)},
  volume = {70},
  number = {1},
  pages = {101--152},
  issn = {0021-7824},
  mrnumber = {1091922}
}

@article{deLeon2019,
  title = {A Review on Contact {{Hamiltonian}} and {{Lagrangian}} Systems},
  author = {family=León, given=Manuel, prefix=de, useprefix=true and Lainz, Manuel},
  date = {2019},
  journaltitle = {Revista de la Real Academia de Ciencias Canaria},
  volume = {XXXI},
  eprint = {2011.05579},
  eprinttype = {arxiv},
  pages = {1--46},
  url = {http://arxiv.org/abs/2011.05579},
  keywords = {{37J55, 70H20, 37J60, 70H45, 70H33, 53D20},Mathematical Physics}
}

@article{deLeon2019a,
  title = {Contact {{Hamiltonian}} Systems},
  author = {family=León, given=Manuel, prefix=de, useprefix=true and Lainz Valcázar, Manuel},
  date = {2019-10},
  journaltitle = {J. Math. Phys.},
  volume = {60},
  number = {10},
  pages = {102902},
  publisher = {{American Institute of Physics}},
  issn = {0022-2488},
  doi = {10.1063/1.5096475},
  url = {https://aip.scitation.org/doi/10.1063/1.5096475},
  keywords = {contact,review}
}

@article{deLeon2023,
  title = {Hamilton–{{Jacobi}} Theory and Integrability for Autonomous and Non-Autonomous Contact Systems},
  author = {family=León, given=Manuel, prefix=de, useprefix=true and Lainz, Manuel and López-Gordón, Asier and Rivas, Xavier},
  date = {2023-05-01},
  journaltitle = {J. Geom. Phys.},
  volume = {187},
  pages = {104787},
  issn = {0393-0440},
  doi = {10.1016/j.geomphys.2023.104787},
  url = {https://www.sciencedirect.com/science/article/pii/S0393044023000396},
  langid = {english},
  keywords = {{37J55, 70H20, 70H33, 70H03, 70H05, 53D05, 53D10, 53Z05},Complete solutions,Contact Hamiltonian systems,Hamilton–Jacobi equation,Integrability,Mathematical Physics,Mathematics - Symplectic Geometry}
}

@article{Fiorani2003,
  title = {The {{Liouville}}–{{Arnold}}–{{Nekhoroshev}} Theorem for Non-Compact Invariant Manifolds},
  author = {Fiorani, Emanuele and Giachetta, Giovanni and Sardanashvily, Gennadi},
  date = {2003-02},
  journaltitle = {J. Phys. A: Math. Gen.},
  volume = {36},
  number = {7},
  pages = {L101},
  issn = {0305-4470},
  doi = {10.1088/0305-4470/36/7/102},
  url = {https://dx.doi.org/10.1088/0305-4470/36/7/102},
  langid = {english}
}

@article{Fiorani2003a,
  title = {An Extension of the {{Liouville-Arnold}} Theorem for the Non-Compact Case},
  author = {Fiorani, E. and Giachetta, G. and Sardanashvily, G.},
  date = {2003},
  journaltitle = {Nuovo Cimento Soc. Ital. Fis. B},
  volume = {118},
  number = {3},
  pages = {307--317},
  issn = {1594-9982},
  url = {https://mathscinet.ams.org/mathscinet-getitem?mr=2162859},
  mrnumber = {2162859}
}

@book{Geiges2008,
  title = {An {{Introduction}} to {{Contact Topology}}},
  author = {Geiges, Hansjörg},
  date = {2008},
  series = {Cambridge {{Studies}} in {{Advanced Mathematics}}},
  publisher = {{Cambridge University Press}},
  location = {{Cambridge}},
  doi = {10.1017/CBO9780511611438},
  url = {https://www.cambridge.org/core/books/an-introduction-to-contact-topology/F851B2A2E7E78C6B9967A18A6641B40C},
  isbn = {978-0-521-86585-2}
}

@article{Geiges2023,
  title = {Bott-Integrable {{Reeb}} Flows on 3-Manifolds},
  author = {Geiges, Hansjörg and Hedicke, Jakob and Sağlam, Murat},
  date = {2024},
  journaltitle = {J. London Math. Soc.},
  volume = {109},
  number = {1},
  pages = {e12859},
  issn = {1469-7750},
  doi = {10.1112/jlms.12859},
  url = {https://onlinelibrary.wiley.com/doi/abs/10.1112/jlms.12859},
  langid = {english}
}

@book{Godbillon1969,
  title = {Géométrie Différentielle et Mécanique Analytique},
  author = {Godbillon, C.},
  date = {1969},
  series = {Collection {{Méthodes}}},
  publisher = {{Hermann}},
  location = {{Paris}},
  url = {https://books.google.es/books?id=0VrvAAAAMAAJ},
  lccn = {lc76420748}
}

@book{Goldstein1980,
  title = {Classical Mechanics},
  author = {Goldstein, Herbert},
  date = {1980},
  series = {Addison-{{Wesley Series}} in {{Physics}}},
  edition = {Second edition},
  publisher = {{Addison-Wesley Publishing Co., Reading, Mass.}},
  url = {https://mathscinet.ams.org/mathscinet-getitem?mr=575343},
  isbn = {978-0-201-02918-5},
  mrnumber = {575343},
  pagetotal = {xiv+672}
}

@article{Grabowska2022,
  title = {A Geometric Approach to Contact {{Hamiltonians}} and Contact {{Hamilton}}–{{Jacobi}} Theory},
  author = {Grabowska, Katarzyna and Grabowski, Janusz},
  date = {2022-11},
  journaltitle = {J. Phys. A: Math. Theor.},
  volume = {55},
  number = {43},
  pages = {435204},
  publisher = {{IOP Publishing}},
  issn = {1751-8121},
  doi = {10.1088/1751-8121/ac9adb},
  url = {https://dx.doi.org/10.1088/1751-8121/ac9adb},
  langid = {english}
}

@article{Grabowska2023a,
  title = {Reductions: Precontact versus Presymplectic},
  shorttitle = {Reductions},
  author = {Grabowska, Katarzyna and Grabowski, Janusz},
  date = {2023-06-02},
  journaltitle = {Annali di Matematica},
  issn = {1618-1891},
  doi = {10.1007/s10231-023-01341-y},
  url = {https://doi.org/10.1007/s10231-023-01341-y},
  langid = {english},
  keywords = {53D10,53D20,{53D20, 53D10, 53D35, 70H33, 70G45, 70S05},53D35,70G45,70H33,70S05,Contact structures,Contactomorphisms,Hamiltonian group action,Mathematical Physics,Mathematics - Differential Geometry,Mathematics - Symplectic Geometry,Principal bundles,Symplectic reduction,Symplectic structures}
}

@article{Grabowski2013,
  title = {Brackets},
  author = {Grabowski, Janusz},
  date = {2013-08-07},
  journaltitle = {Int. J. Geom. Methods Mod. Phys.},
  volume = {10},
  number = {8},
  publisher = {{World Scientific Publishing Company}},
  doi = {10.1142/S0219887813600013},
  url = {https://www.worldscientific.com/doi/epdf/10.1142/S0219887813600013},
  langid = {english}
}

@article{Ibanez1997,
  title = {Co-Isotropic and {{Legendre}} - {{Lagrangian}} Submanifolds and Conformal {{Jacobi}} Morphisms},
  author = {Ibáñez, Raúl and family=León, given=Manuel, prefix=de, useprefix=false and Marrero, Juan C. and family=Diego, given=David Martín, prefix=de, useprefix=false},
  date = {1997-08},
  journaltitle = {J. Phys. A: Math. Gen.},
  volume = {30},
  number = {15},
  pages = {5427},
  issn = {0305-4470},
  doi = {10.1088/0305-4470/30/15/027},
  url = {https://dx.doi.org/10.1088/0305-4470/30/15/027},
  langid = {english}
}

@article{Jovanovic2012,
  title = {Noncommutative Integrability and Action-Angle Variables in Contct Geometry},
  author = {Jovanović, Božidar},
  date = {2012-12},
  journaltitle = {J. Symplectic Geom.},
  volume = {10},
  number = {4},
  pages = {535--561},
  publisher = {{International Press of Boston}},
  issn = {1527-5256, 1540-2347},
  url = {https://projecteuclid.org/journals/journal-of-symplectic-geometry/volume-10/issue-4/Noncommutative-integrability-and-action-angle-variables-in-contct-geometry/jsg/1357153428.full}
}

@article{Jovanovic2015,
  title = {Contact Flows and Integrable Systems},
  author = {Jovanović, Božidar and Jovanović, Vladimir},
  date = {2015-01-01},
  journaltitle = {J. Geom. Phys.},
  series = {Finite Dimensional Integrable Systems: On the Crossroad of Algebra, Geometry and Physics},
  volume = {87},
  pages = {217--232},
  issn = {0393-0440},
  doi = {10.1016/j.geomphys.2014.07.030},
  url = {https://www.sciencedirect.com/science/article/pii/S0393044014001715},
  langid = {english},
  keywords = {Brieskorn manifolds,Constraints,Contact systems,Hypersurfaces of contact type,Noncommutative integrability,Partial integrability}
}

@article{Khesin2010,
  title = {Contact Complete Integrability},
  author = {Khesin, B. and Tabachnikov, S.},
  date = {2010-10},
  journaltitle = {Regul. Chaotic Dyn.},
  volume = {15},
  number = {4-5},
  pages = {504--520},
  issn = {1560-3547, 1468-4845},
  doi = {10.1134/S1560354710040076},
  url = {http://link.springer.com/10.1134/S1560354710040076},
  langid = {english},
  keywords = {Contact Geometry,Integrable Systems}
}

@book{K.N1996,
  title = {Foundations of Differential Geometry. {{Vol}}. {{I}}},
  author = {Kobayashi, Shoshichi and Nomizu, Katsumi},
  date = {1996},
  series = {Wiley {{Classics Library}}},
  publisher = {{John Wiley \& Sons, Inc., New York}},
  isbn = {978-0-471-15733-5},
  mrnumber = {1393940},
  pagetotal = {xii+329}
}

@thesis{Lainz2022,
  type = {phdthesis},
  title = {Contact {{Hamiltonian Systems}}},
  author = {Lainz, Manuel},
  date = {2022},
  institution = {{Universidad Autónoma de Madrid}},
  url = {http://hdl.handle.net/10486/704774},
  langid = {english}
}

@article{Laurent-Gengoux2010,
  title = {Action-Angle {{Coordinates}} for {{Integrable Systems}} on {{Poisson Manifolds}}},
  author = {Laurent-Gengoux, C. and Miranda, E. and Vanhaecke, P.},
  date = {2010-07-13},
  journaltitle = {Int. Math. Res. Not.},
  pages = {rnq130},
  issn = {1073-7928, 1687-0247},
  doi = {10.1093/imrn/rnq130},
  url = {https://academic.oup.com/imrn/article-lookup/doi/10.1093/imrn/rnq130},
  langid = {english}
}

@article{Le2018,
  title = {Deformations of Coisotropic Submanifolds in {{Jacobi}} Manifolds},
  author = {Lê, Hông Vân and Oh, Yong-Geun and Tortorella, Alfonso G. and Vitagliano, Luca},
  date = {2018},
  journaltitle = {J. Symplectic Geom.},
  volume = {16},
  number = {4},
  pages = {1051--1116},
  issn = {1527-5256},
  doi = {10.4310/JSG.2018.v16.n4.a7},
  url = {https://mathscinet.ams.org/mathscinet-getitem?mr=3917729},
  mrnumber = {3917729}
}

@article{Lerman2003,
  title = {Contact Toric Manifolds},
  author = {Lerman, Eugene},
  date = {2003},
  journaltitle = {J. Symplectic Geom.},
  volume = {1},
  number = {4},
  pages = {785--828},
  issn = {1527-5256},
  url = {https://mathscinet.ams.org/mathscinet-getitem?mr=2039164},
  mrnumber = {2039164}
}

@book{Libermann1987,
  title = {Symplectic {{Geometry}} and {{Analytical Mechanics}}},
  author = {Libermann, Paulette and Marle, Charles-Michel},
  date = {1987},
  publisher = {{Springer Netherlands}},
  location = {{Dordrecht}},
  doi = {10.1007/978-94-009-3807-6},
  url = {http://link.springer.com/10.1007/978-94-009-3807-6},
  isbn = {978-90-277-2439-7 978-94-009-3807-6},
  langid = {english}
}

@article{Libermann1991,
  title = {Legendre Foliations on Contact Manifolds},
  author = {Libermann, Paulette},
  date = {1991},
  journaltitle = {Differential Geom. Appl.},
  volume = {1},
  number = {1},
  pages = {57--76},
  issn = {0926-2245},
  doi = {10.1016/0926-2245(91)90022-2},
  url = {https://mathscinet.ams.org/mathscinet-getitem?mr=1109814},
  mrnumber = {1109814}
}

@inproceedings{Michor1987,
  title = {Remarks on the {{Schouten-Nijenhuis}} Bracket},
  booktitle = {Rend. {{Circ}}. {{Mat}}. {{Palermo}} (2) {{Suppl}}.},
  author = {Michor, Peter W.},
  date = {1987},
  series = {Rendiconti Del {{Circolo Matematico}} Di {{Palermo}}},
  number = {16},
  pages = {207--215},
  issn = {1592-9531},
  fjournal = {Rendiconti del Circolo Matematico di Palermo. Serie II. Supplemento},
  mrclass = {58A10 (53A55)},
  mrnumber = {946726},
  mrreviewer = {Josef Janyška}
}

@article{Miranda2014,
  title = {Integrable Systems and Group Actions},
  author = {Miranda, Eva},
  date = {2014-02-01},
  journaltitle = {Open Mathematics},
  volume = {12},
  number = {2},
  pages = {240--270},
  publisher = {{De Gruyter Open Access}},
  issn = {2391-5455},
  doi = {10.2478/s11533-013-0333-6},
  url = {https://www.degruyter.com/document/doi/10.2478/s11533-013-0333-6/html},
  langid = {english},
  keywords = {Contact manifold,Group action,Integrable system,Momentum map,Poisson manifold,Symplectic manifold}
}

@online{Miranda2022,
  title = {Action-Angle Coordinates and {{KAM}} Theory for Singular Symplectic Manifolds},
  author = {Miranda, Eva and Planas, Arnau},
  date = {2022-12-31},
  eprint = {2301.00266},
  eprinttype = {arxiv},
  eprintclass = {math},
  pubstate = {preprint},
  keywords = {Mathematics - Dynamical Systems,Mathematics - Symplectic Geometry}
}

@thesis{MirandaGalceran2003,
  type = {phdthesis},
  title = {On symplectic linearization of singular lagrangian foliations},
  author = {Miranda, Eva},
  date = {2003},
  institution = {{Universitat de Barcelona}},
  url = {https://documat.unirioja.es/servlet/tesis?codigo=19039},
  langid = {spanish}
}

@inproceedings{MirandaGalceran2005,
  title = {A Normal Form Theorem for Integrable Systems on Contact Manifolds},
  booktitle = {Proceedings of {{XIII Fall Workshop}} on {{Geometry}} and {{Physics}}},
  author = {Miranda, Eva},
  date = {2005},
  pages = {240--246},
  publisher = {{Real Sociedad Matemática Española}},
  location = {{Murcia, Spain}},
  url = {https://dialnet.unirioja.es/servlet/articulo?codigo=2174084},
  isbn = {978-84-933610-6-8},
  langid = {english}
}

@online{Montaldi2023,
  title = {Equilibria and Bifurcations in Contact Dynamics},
  author = {Montaldi, James},
  date = {2023-10-01},
  eprint = {2310.00764},
  eprinttype = {arxiv},
  eprintclass = {math-ph},
  pubstate = {preprint},
  keywords = {{37G10, 37J55, 53E50},Mathematical Physics,Mathematics - Dynamical Systems,Mathematics - Symplectic Geometry}
}

@article{Pang1990,
  title = {The Structure of {{Legendre}} Foliations},
  author = {Pang, Myung-Yull},
  date = {1990},
  journaltitle = {Trans. Amer. Math. Soc.},
  volume = {320},
  number = {2},
  pages = {417--455},
  issn = {0002-9947, 1088-6850},
  doi = {10.1090/S0002-9947-1990-1016808-6},
  url = {https://www.ams.org/tran/1990-320-02/S0002-9947-1990-1016808-6/},
  langid = {english}
}

@book{Vaisman1994,
  title = {Lectures on the {{Geometry}} of {{Poisson Manifolds}}},
  author = {Vaisman, Izu},
  date = {1994},
  publisher = {{Birkhäuser Basel}},
  location = {{Basel}},
  doi = {10.1007/978-3-0348-8495-2},
  url = {http://link.springer.com/10.1007/978-3-0348-8495-2},
  isbn = {978-3-0348-9649-8 978-3-0348-8495-2},
  langid = {english}
}

@book{Wiggins2003,
  title = {Introduction to Applied Nonlinear Dynamical Systems and Chaos},
  author = {Wiggins, Stephen},
  date = {2003},
  series = {Texts in Applied Mathematics},
  edition = {2nd ed},
  number = {2},
  publisher = {{Springer}},
  location = {{New York}},
  isbn = {978-0-387-00177-7},
  langid = {english},
  pagetotal = {843},
  keywords = {Chaotic behavior in systems,Differentiable dynamical systems,Nonlinear theories}
}

@online{Zung2012,
  title = {Action-{{Angle}} Variables on {{Dirac}} Manifolds},
  author = {Zung, Nguyen Tien},
  date = {2012-04-17},
  eprint = {1204.3865},
  eprinttype = {arxiv},
  eprintclass = {math-ph},
  pubstate = {preprint},
  keywords = {{37G05, 37J35, 70H06, 70H45},Mathematical Physics,Mathematics - Dynamical Systems,Mathematics - Symplectic Geometry}
}

@article{G.L1984,
  title = {Géométrie Des Algèbres de {{Lie}} Locales de {{Kirillov}}},
  author = {Guedira, Fouzia and Lichnerowicz, André},
  date = {1984},
  journaltitle = {J. Math. Pures Appl. (9)},
  volume = {63},
  number = {4},
  pages = {407--484},
  issn = {0021-7824},
  mrnumber = {789560}
}

@article{Kirillov1976,
  title = {Local {{Lie Algebras}}},
  author = {Kirillov, A. A.},
  date = {1976-08-31},
  journaltitle = {Russ. Math. Surv.},
  volume = {31},
  number = {4},
  pages = {55--75},
  issn = {0036-0279, 1468-4829},
  doi = {10.1070/RM1976v031n04ABEH001556},
  url = {http://stacks.iop.org/0036-0279/31/i=4/a=R02?key=crossref.b88697572b48af4d7d2216e575131451}
}

@article{Lichnerowicz1977,
  title = {Variétés de {{Jacobi}} et Algèbres de {{Lie}} Associées},
  author = {Lichnerowicz, André},
  date = {1977},
  journaltitle = {C. R. Acad. Sci. Paris Sér. A-B},
  volume = {285},
  number = {6},
  pages = {A455--A459},
  issn = {0151-0509},
  fjournal = {Comptes Rendus Hebdomadaires des Séances de l'Académie des Sciences. Séries A et B},
  mrclass = {58F05},
  mrnumber = {455037}
}

@article{Lichnerowicz1977a,
  title = {Les Variétés de {{Poisson}} et Leurs Algèbres de {{Lie}} Associées},
  author = {Lichnerowicz, André},
  date = {1977},
  journaltitle = {J. Differential Geometry},
  volume = {12},
  number = {2},
  pages = {253--300},
  issn = {0022-040X,1945-743X},
  url = {http://projecteuclid.org/euclid.jdg/1214433987},
  fjournal = {Journal of Differential Geometry},
  mrclass = {58F05},
  mrnumber = {501133},
  mrreviewer = {A. Crumeyrolle}
}

@article{Lichnerowicz1978,
  title = {Les Variétés de {{Jacobi}} et Leurs Algèbres de {{Lie}} Associées},
  author = {Lichnerowicz, André},
  date = {1978},
  journaltitle = {J. Math. Pures Appl. (9)},
  volume = {57},
  number = {4},
  pages = {453--488},
  issn = {0021-7824},
  url = {https://mathscinet.ams.org/mathscinet-getitem?mr=524629},
  mrnumber = {524629}
}

@article{B.G.M+2024,
  title = {Kirillov Structures and Reduction of {{Hamiltonian}} Systems by Scaling and Standard Symmetries},
  author = {Bravetti, A. and Grillo, S. and Marrero, J. C. and Padrón, E.},
  date = {2024},
  journaltitle = {Studies in Applied Mathematics},
  volume = {153},
  number = {1},
  pages = {e12681},
  issn = {1467-9590},
  doi = {10.1111/sapm.12681},
  url = {https://onlinelibrary.wiley.com/doi/abs/10.1111/sapm.12681},
  langid = {english},
  keywords = {contact structures,Hamiltonian systems,Kirillov structures,reconstruction process,reduction process,scaling symmetry,standard symmetry}
}

@article{B.J.S2023,
  title = {Scaling Symmetries, Contact Reduction and {{Poincaré}}’s Dream},
  author = {Bravetti, Alessandro and Jackman, Connor and Sloan, David},
  date = {2023-10},
  journaltitle = {J. Phys. A: Math. Theor.},
  volume = {56},
  number = {43},
  pages = {435203},
  publisher = {IOP Publishing},
  issn = {1751-8121},
  doi = {10.1088/1751-8121/acfddd},
  url = {https://dx.doi.org/10.1088/1751-8121/acfddd},
  langid = {english}
}

@article{G.G.R2022,
  title = {{{VB-structures}} and Generalizations},
  author = {Grabowska, Katarzyna and Grabowski, Janusz and Ravanpak, Zohreh},
  date = {2022},
  journaltitle = {Ann. Global Anal. Geom.},
  volume = {62},
  number = {1},
  pages = {235--284},
  issn = {0232-704X,1572-9060},
  doi = {10.1007/s10455-022-09847-z},
  mrnumber = {4434694}
}

@online{G.G2024,
  title = {Homogeneity Supermanifolds and Homogeneous {{Darboux}} Theorem},
  author = {Grabowska, Katarzyna and Grabowski, Janusz},
  date = {2024-11-01},
  eprint = {2411.00537},
  eprinttype = {arXiv},
  doi = {10.48550/arXiv.2411.00537},
  url = {http://arxiv.org/abs/2411.00537},
  pubstate = {prepublished},
  keywords = {Mathematical Physics,Mathematics - Differential Geometry,Mathematics - Mathematical Physics,Mathematics - Symplectic Geometry}
}

@article{G.G2024a,
  title = {Contact Geometric Mechanics: The {{Tulczyjew}} Triples},
  shorttitle = {Contact Geometric Mechanics},
  author = {Grabowska, Katarzyna and Grabowski, Janusz},
  date = {2024-09-13},
  journaltitle = {Adv. Theor. Math. Phys.},
  volume = {28},
  number = {2},
  pages = {599--654},
  publisher = {International Press of Boston},
  issn = {1095-0753},
  doi = {10.4310/ATMP.240914022224},
  url = {https://link.intlpress.com/JDetail/1834660029902925826},
  langid = {english},
  keywords = {37J39 70Hxx 70S05 53D35 53D10 53D35 70G45,Mathematical Physics,Mathematics - Differential Geometry,Mathematics - Symplectic Geometry}
}

@online{Jovanovic2025,
  title = {Contact Line Bundles, Foliations, and Integrability},
  author = {Jovanovic, Bozidar},
  date = {2025-02-05},
  eprint = {2502.02935},
  eprinttype = {arXiv},
  eprintclass = {math},
  doi = {10.48550/arXiv.2502.02935},
  url = {http://arxiv.org/abs/2502.02935},
  pubstate = {prepublished},
  version = {1},
  keywords = {Mathematics - Symplectic Geometry,Nonlinear Sciences - Exactly Solvable and Integrable Systems}
}

\end{document}